\DeclareMathOperator{\SL}{\mathrm{SL}}
\DeclareMathOperator{\GL}{\mathrm{GL}}
\begin{document}

\newtheorem{theorem}{Theorem}[subsection]
\newtheorem{lemma}[theorem]{Lemma}
\newtheorem{corollary}[theorem]{Corollary}
\newtheorem{conjecture}[theorem]{Conjecture}
\newtheorem{proposition}[theorem]{Proposition}
\newtheorem{question}[theorem]{Question}
\newtheorem{problem}[theorem]{Problem}
\newtheorem*{PL_LO}{PL locally indicable Theorem~\ref{theorem:pillow}}
\newtheorem*{PL_distortion}{PL distortion Corollary~\ref{corollary:distorted}}
\newtheorem*{PL_CO}{PL circularly orderable Theorem~\ref{theorem:PL_ropes_co}}
\newtheorem*{claim}{Claim}
\newtheorem*{criterion}{Criterion}
\theoremstyle{definition}
\newtheorem{definition}[theorem]{Definition}
\newtheorem{construction}[theorem]{Construction}
\newtheorem{notation}[theorem]{Notation}
\newtheorem{convention}[theorem]{Convention}
\newtheorem*{warning}{Warning}

\theoremstyle{remark}
\newtheorem{remark}[theorem]{Remark}
\newtheorem{example}[theorem]{Example}
\newtheorem*{case}{Case}

\def\R{\mathbb R}
\def\Z{\mathbb Z}
\def\RP{\mathbb{RP}}
\def\PL{\textnormal{PL}}
\def\GL{\textnormal{GL}}
\def\Homeo{\textnormal{Homeo}}
\def\Diff{\textnormal{Diff}}
\def\fix{\textnormal{fix}}
\def\fro{\textnormal{fro}}
\def\Id{\textnormal{Id}}

\newcommand{\marginal}[1]{\marginpar{\tiny #1}}

\title{Groups of PL homeomorphisms of cubes}
\author{Danny Calegari}
\address{Department of Mathematics \\ University of Chicago \\
Chicago, Illinois, 60637}
\email{dannyc@math.uchicago.edu}
\author{Dale Rolfsen}
\address{Department of Mathematics \\ University of British Columbia \\ Vancouver \\ Canada}
\email{rolfsen@math.ubc.ca}
\dedicatory{D\'edi\'e \`a Michel Boileau sur son soixanti\`eme anniversaire. Sant\'e!}

\date{\today}

\begin{abstract}
We study algebraic properties of 
groups of PL or smooth homeomorphisms of unit cubes in any dimension, fixed pointwise on the boundary,
and more generally PL or smooth groups acting on manifolds and fixing pointwise a submanifold
of codimension 1 (resp. codimension 2), and show that such groups are locally indicable
(resp. circularly orderable). We also give many examples of interesting groups that can act,
and discuss some other algebraic constraints that such groups must satisfy, including the fact that
a group of PL homeomorphisms of the $n$-cube (fixed pointwise on the boundary) contains no
elements that are more than exponentially distorted.
\end{abstract}

\maketitle

\section{Introduction}

We are concerned in this paper with algebraic properties of the
group of $\PL$ homeomorphisms of a $\PL$ manifold, fixed on some $\PL$ 
submanifold (usually of codimension 1 or 2, for instance the boundary) 
and some of its subgroups (usually those preserving some structure).
The most important case is the group of $\PL$ homeomorphisms of $I^n$ fixed
pointwise on $\partial I^n$; hence these are ``groups of PL homeomorphisms of the ($n$-)cube''.

The {\em algebraic} study of transformation groups (often in low dimension, or
preserving some extra structure such as a symplectic or complex structure)
has recently seen a lot of activity; however, much of this activity has been
confined to the smooth category. It is striking that many of these results 
can be transplanted to the $\PL$ category. This
interest is further strengthened by the possibility of working in the $\PL$
category over (real) algebraic rings or fields (this possibility has already
been exploited in dimension 1, in the groups $F$ and $T$ of Richard Thompson).

Many theorems we prove have analogs in the ($C^1$) smooth category, and we 
usually give proofs of such theorems for comparison where they are not 
already available in the literature.

\subsection{Statement of results}

The main algebraic properties of our groups that we establish are
{\em left orderability} (in fact, local indicability) and {\em controlled distortion}.
These are algebraic properties which at the same time are readily compared with 
geometric or topological properties of a group action.
For example, the property of left orderability for a countable group is {\em equivalent} to
the existence of a faithful action on $(I,\partial I)$ by homeomorphisms. As another example,
control of (algebraic) distortion has recently been used by Hurtado \cite{Hurtado} to
prove strong rigidity results for homomorphisms between various (smooth) transformation groups.

In \S~\ref{section:background} we state for the convenience of the reader standard
definitions and results from the theory of left-orderable groups.

In \S~\ref{section:PL_groups} we begin our analysis of PL groups of homeomorphisms of
manifolds, fixed pointwise on a codimension 1 submanifold. If $M$ is a PL manifold and $K$ a
submanifold, we denote by $\PL_+(M,K)$ the group of orientation-preserving PL homeomorphisms of $M$
fixed pointwise on $K$. We similarly denote by $\Diff_+(M,K)$ and $\Homeo_+(M,K)$ the
groups of diffeomorphisms (resp. homeomorphisms) of a smooth (resp. topological) manifold $M$,
fixed pointwise on a smooth (resp. topological) submanifold $K$ of codimension 1. 
The first main theorem of this section is:

\begin{PL_LO}
Let $M$ be an $n$ dimensional connected $\PL$ manifold, and let $K$ be a
nonempty closed $\PL$ submanifold of codimension at most 1.
Then the group $\PL_+(M,K)$ is left orderable; in fact, it is locally
indicable.
\end{PL_LO}

We give some basic constructions of interesting groups of PL homeomorphisms of cubes (of dimension at 
least 2) fixed pointwise on the boundary, including examples of free subgroups, groups with
infinite dimensional spaces of (dynamically defined) quasimorphisms, and right-angled Artin groups;
in fact, the method of Funar, together with a recent result of Kim and Koberda shows that every RAAG embeds in $\PL(I^2,\partial I^2)$. 

We conclude this section by studying distortion in $\PL(I^m,\partial I^m)$, and prove

\begin{PL_distortion}
Every element of $\PL(I^m,\partial I^m)-\Id$ is at most exponentially distorted.
\end{PL_distortion}

This lets us easily construct explicit examples of locally indicable groups which are not isomorphic to
subgroups of $\PL(I^m,\partial I^m)$ for any $m$ (for example, iterated HNN extensions).

In \S~\ref{section:smooth} we prove Theorem~\ref{theorem:cilo}, the analog of
Theorem~\ref{theorem:pillow} for groups of $C^1$ smooth diffeomorphisms. The main purpose of
this section is to compare and contrast the methods of proof in the PL and smooth categories.

In \S~\ref{section:biorderable} we sharpen our focus to the question of bi-orderability (i.e.\/
orders which are invariant under both left and right multiplication), for
transformation groups acting on cubes in various dimensions and with differing degrees of
analytic control. In dimensions bigger than 1, none of the groups we study are bi-orderable.
In dimension 1, the groups $\PL(I,\partial I)$ and $\Diff^\omega(I,\partial I)$ are bi-orderable.

In \S~\ref{section:circular} we consider groups acting on manifolds and fixing pointwise submanifolds
(informally, ``knots'') of codimension two. Here we are able to bootstrap our results from
previous sections to show the following:

\begin{PL_CO}
Let $M$ be an $n$ dimensional connected $\PL$ orientable manifold, and let $K$ be a
nonempty $n-2$ dimensional closed submanifold. Then the group $\PL_+(M,K)$ is
circularly orderable.
\end{PL_CO}

An analog for $C^1$ diffeomorphisms is also proved. An interesting application is to the
case that $M$ is a 3-manifold, and $K$ is a hyperbolic knot (i.e.\/ a knot with a hyperbolic
complement). In this case, if $G_0(M,K)$ denotes the group of PL homeomorphisms 
(or $C^1$ diffeomorphisms) isotopic to the identity and taking $K$ to itself (but {\em not} necessarily
fixing it pointwise) then $G_0(M,K)$ is left-orderable. This depends on theorems of Hatcher and Ivanov
on the topology of $G_0(M,K)$ as a topological group.

Finally, in \S~\ref{section:finite} we briefly discuss groups of homeomorphisms with no analytic
restrictions. Our main point is how little is known in this generality in dimension $>1$;
in particular, it is not even known if the group $\Homeo(I^2,\partial I^2)$ is left-orderable.
We also make the observation that the groups $\Homeo(I^n,\partial I^n)$ are torsion-free for all
$n$; this follows immediately from Smith theory, but the result does not seem to be well-known to
people working on left-orderable groups, so we believe it is useful to include
an argument here.

\subsection{Acknowledgement}

We would like to thank Andr\'es Navas and Amie Wilkinson for some helpful discussions
about this material. Danny Calegari was supported by NSF grant DMS 1005246.
Dale Rolfsen was supported by a grant from the Canadian Natural Sciences and Engineering Research
Council.

\section{Left orderable groups}\label{section:background}

This section contains a very brief summary of some standard results in the theory of
left orderable groups. These results are collected here for the convenience of the reader.
For proofs, see e.g.\/ \cite{Calegari_foliations}, Chapter~2.

\begin{definition}[Left orderable]\label{definition:left_orderable}
A group $G$ is {\em left orderable} (usually abbreviated to LO) if there is a total order
$\prec$ on $G$ so that for all $f,g,h \in G$ the relation $g \prec h$ holds if and only if
$fg\prec fh$ holds.
\end{definition}

\begin{lemma}\label{lemma:ses}
If there is a short exact sequence $0 \to K \to G \to H \to 0$ and both $K$ and $H$ are left
orderable, then $G$ is left orderable.
\end{lemma}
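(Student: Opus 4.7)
The plan is to build a lexicographic left order on $G$ from the given orders. Let $\pi\colon G \to H$ denote the quotient map with kernel $K$, and let $\prec_K$ and $\prec_H$ be left-invariant total orders on $K$ and $H$ respectively. I would define a relation $\prec$ on $G$ by declaring $g \prec g'$ to mean either $\pi(g) \prec_H \pi(g')$, or else $\pi(g) = \pi(g')$ and $1 \prec_K g^{-1}g'$; in the second case the element $g^{-1}g'$ lies in $\ker(\pi) = K$, so the $\prec_K$-comparison makes sense.

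It then remains to verify that $\prec$ is a left-invariant total order. Totality and irreflexivity are immediate from trichotomy for $\prec_H$ together with trichotomy for $\prec_K$ applied to $g^{-1}g'$ in the case $\pi(g) = \pi(g')$. Transitivity is a brief case analysis on whether the comparisons $g \prec g'$ and $g' \prec g''$ are decided at the $H$-level or the $K$-level: the mixed cases are dominated by the $H$-level comparison, the all-$H$ case uses transitivity of $\prec_H$, and the all-$K$ case uses transitivity of $\prec_K$.

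Left invariance is the one property that genuinely requires both input orders to be left invariant. If $g \prec g'$ is decided at the $H$-level, then $\pi(fg) = \pi(f)\pi(g) \prec_H \pi(f)\pi(g') = \pi(fg')$ by left invariance of $\prec_H$, giving $fg \prec fg'$. If it is decided at the $K$-level, then $\pi(fg) = \pi(fg')$ while $(fg)^{-1}(fg') = g^{-1}g'$ is unchanged, so the $\prec_K$-comparison persists. There is no real obstacle: the construction is the standard lexicographic extension of orders along a short exact sequence, and every step is formal. Equivalently one can phrase it via positive cones, setting $P_G := \pi^{-1}(P_H) \cup P_K$, where closure of $P_G$ under multiplication reduces to the facts that $\pi$ is a homomorphism and $K$ is normal.
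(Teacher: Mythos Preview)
Your argument is correct and is precisely the standard lexicographic construction. The paper does not actually supply a proof of this lemma: it is listed among several background facts in \S\ref{section:background} with the blanket remark ``For proofs, see e.g.\ \cite{Calegari_foliations}, Chapter~2.'' So there is nothing to compare against beyond noting that what you wrote is the usual proof one would find in such a reference. One tiny quibble: in your closing sentence, normality of $K$ is not actually needed to verify that $P_G = \pi^{-1}(P_H) \cup P_K$ is closed under multiplication (your own case analysis shows it follows just from $\pi$ being a homomorphism and $P_H$, $P_K$ being semigroups); normality is of course automatic here, so the remark is harmless, but it slightly overstates what is used.
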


\begin{lemma}\label{lemma:LLO}
A group if left orderable if and only if every finitely generated subgroup is left orderable.
\end{lemma}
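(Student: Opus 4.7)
The forward direction is immediate: if $\prec$ is a left order on $G$, its restriction to any subgroup $H$ is a left order on $H$. So the content is in the backward direction.

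My plan is to prove the backward direction by a compactness/Tychonoff argument on the space of positive cones. Recall that specifying a left order on $G$ is equivalent to specifying a \emph{positive cone}, i.e.\ a subset $P \subseteq G$ with (i) $e \notin P$, (ii) for every $g \neq e$ exactly one of $g, g^{-1}$ lies in $P$, and (iii) $P \cdot P \subseteq P$. The passage is via $g \prec h \Leftrightarrow g^{-1}h \in P$. The idea is to realize such a $P$ as a limit of partial data coming from the given orders on finitely generated subgroups.

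Concretely, I would view subsets of $G$ as points of the product space $\{0,1\}^G$, equipped with the product topology, which is compact Hausdorff by Tychonoff. For each finite subset $F \subseteq G$, generating the finitely generated subgroup $\langle F\rangle$, let
\[
X_F = \bigl\{\, P \subseteq G \,:\, P \cap \langle F\rangle \text{ is the positive cone of some left order on } \langle F\rangle \,\bigr\}.
\]
Axioms (i)--(iii) for $P\cap \langle F\rangle$ involve only finitely many coordinates at a time (one checks that each axiom, restricted to $\langle F\rangle$, is a conjunction of finitary conditions on the membership of specific group elements in $P$), so $X_F$ is closed in $\{0,1\}^G$. By hypothesis $\langle F\rangle$ is left orderable, so any left order on $\langle F\rangle$ can be extended to a subset of $G$ by assigning membership arbitrarily outside $\langle F\rangle$; hence $X_F \neq \emptyset$.

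Next I would verify the finite intersection property: given $F_1,\dots,F_k$, the union $F = F_1 \cup \dots \cup F_k$ is finite, $\langle F\rangle$ contains every $\langle F_i\rangle$, and any positive cone on $\langle F\rangle$ restricts to a positive cone on each $\langle F_i\rangle$; so $X_{F_1} \cap \cdots \cap X_{F_k} \supseteq X_F \neq \emptyset$. By compactness of $\{0,1\}^G$, the intersection $\bigcap_F X_F$ over all finite $F \subseteq G$ is nonempty. Any $P$ in this intersection is a positive cone for a left order on all of $G$: given arbitrary $g,h \in G$, axioms (i)--(iii) as applied to $g, h, gh, g^{-1}$ involve at most the subgroup $\langle g,h\rangle$, which is finitely generated, so they hold by $P \in X_{\{g,h\}}$.

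The only mildly delicate step is the bookkeeping verifying that the cone axioms on $\langle F\rangle$ cut out a closed subset of $\{0,1\}^G$ and that the desired axioms on all of $G$ really do follow from their restrictions to every finitely generated subgroup; both are routine since each axiom quantifies over only finitely many elements at once. I do not foresee any serious obstacle — the proof is essentially an invocation of Tychonoff, and is the standard template by which one deduces ``global'' algebraic properties from their ``finitely generated'' versions.
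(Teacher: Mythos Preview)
Your argument is correct; the Tychonoff/compactness argument on the space of positive cones is the standard proof of this fact. One small point of phrasing: when you say the axioms ``involve only finitely many coordinates at a time,'' note that $\langle F\rangle$ is typically infinite, so the set $X_F$ is cut out by infinitely many conditions; what matters is that each individual instance of (i)--(iii) involves only finitely many coordinates, so each is a closed condition, and $X_F$ is an intersection of closed sets. You clearly understand this, but the wording could be tightened.

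As for comparison with the paper: the paper does not prove this lemma at all. It is listed in \S\ref{section:background} among ``standard results in the theory of left orderable groups \ldots\ collected here for the convenience of the reader,'' with proofs deferred to \cite{Calegari_foliations}, Chapter~2. The argument given there is essentially the same compactness argument you outline, so your proof aligns with the intended reference.
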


\begin{lemma}\label{lemma:LO_action}
A countable group $G$ is left orderable if and only if it isomorphic to 
a subgroup of $\Homeo(I,\partial I)$.
\end{lemma}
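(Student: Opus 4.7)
The plan is to prove each implication by an explicit construction.

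For the ``if'' direction, suppose $G$ is a subgroup of $\Homeo(I,\partial I)$. Every element of this group fixes $\{0,1\}$ and is a continuous bijection of $[0,1]$, hence is strictly increasing. Fix a countable dense sequence $(x_n)_{n\ge 1}$ in $(0,1)$, and declare $f \prec g$ iff at the smallest $n$ with $f(x_n)\neq g(x_n)$ one has $f(x_n)<g(x_n)$. This is a total order: if $f\neq g$ then by continuity and density $f(x_n)\neq g(x_n)$ for some $n$. It is left-invariant because, for any $h\in G$, the set of indices on which $hf$ and $hg$ disagree equals the set on which $f,g$ disagree (by injectivity of $h$), and at the smallest such index the comparison of values is preserved (by monotonicity of $h$).

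For the ``only if'' direction, fix a left-invariant order $\prec$ on $G$ and enumerate $G=\{g_1=e, g_2, g_3,\ldots\}$. I would first construct an order embedding $\phi\colon (G,\prec)\hookrightarrow (\R,<)$ by induction: set $\phi(g_1)=0$; having defined $\phi$ on $\{g_1,\ldots,g_n\}$ compatibly with $\prec$, place $\phi(g_{n+1})$ one unit above (respectively below) the current maximum (respectively minimum) if $g_{n+1}$ is $\prec$-greater (respectively less) than all previous $g_i$, and otherwise at the midpoint of $\phi(g_i), \phi(g_j)$, where $g_i, g_j$ are the immediate $\prec$-neighbours of $g_{n+1}$ within $\{g_1,\ldots,g_n\}$. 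Since left multiplication by any $h\in G$ preserves $\prec$, the map $\phi(k)\mapsto \phi(hk)$ is an order-preserving bijection of $\phi(G)\subset \R$.

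To turn these into honest homeomorphisms (the ``dynamical realization'' of $\prec$), extend each such map by continuity to a monotone bijection of $Y := \overline{\phi(G)}$, and then by affine interpolation on each open component of $\R\setminus Y$ onto the corresponding complementary interval in its image. The assignment $h\mapsto \tilde h$ is then a homomorphism into $\Homeo_+(\R)$: composition is correct on $\phi(G)$ by construction, extends correctly to $Y$ by continuity, and each $\tilde h$ permutes the complementary intervals, so the affine pieces compose correctly too. Faithfulness is clear, since $\tilde h = \Id$ forces $\phi(hk)=\phi(k)$ for all $k$, hence $h=e$. Conjugating by any orientation-preserving homeomorphism $\R\to (0,1)$ and extending by the identity on $\{0,1\}$ realises $G$ as a subgroup of $\Homeo(I,\partial I)$.

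The subtle step is verifying that $h\mapsto \tilde h$ is a genuine group homomorphism, not merely a set map. The affine recipe on gaps is essentially forced by this requirement: an order-preserving bijection between two bounded open intervals, once required to be affine, is uniquely determined by its endpoints, so the extensions automatically commute with composition. Without such uniqueness, arbitrary monotone fillings of the gaps would not in general satisfy $\widetilde{hk} = \tilde h\,\tilde k$.
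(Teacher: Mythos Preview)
The paper does not prove this lemma: it is stated in \S\ref{section:background} as one of several standard background facts, with the reader referred to \cite{Calegari_foliations}, Chapter~2 for proofs. So there is no in-paper argument to compare against; your proof is the standard ``dynamical realization'' construction found in that and other references, and both directions are essentially correct.

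One point deserves more care. You assert that the order-preserving bijection $\phi(k)\mapsto\phi(hk)$ ``extends by continuity to a monotone bijection of $Y:=\overline{\phi(G)}$''. For an \emph{arbitrary} order embedding $\phi$ this can fail: a two-sided accumulation point of $\phi(G)$ could be sent to a jump across a gap of $Y$, so the extension is neither continuous nor bijective. What saves you is your specific midpoint construction: if a Dedekind cut $(L,R)$ of $(G,\prec)$ has $L$ with no maximum, there are infinitely many stages at which a new $\prec$-largest element of $L$ is enumerated, and each such stage halves the distance between the current top of $\phi(L)$ and bottom of $\phi(R)$, forcing $\sup\phi(L)=\inf\phi(R)$. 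It follows that every point of $Y\setminus\phi(G)$ is a two-sided accumulation point at which the left and right limits of $\psi_h$ agree, and that every gap of $Y$ has both endpoints in $\phi(G)$ (corresponding to consecutive elements of $G$), which is exactly what makes the affine-on-gaps extension a homomorphism. You might add a sentence flagging this, since it is the one place where the particular choice of $\phi$ matters.
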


\begin{definition}[Locally indicable]\label{defininion:locally_indicable}
A group $G$ is {\em locally indicable} if for every finitely generated nontrivial subgroup
$H$ of $G$ there is a surjective homomorphism from $H$ to $\Z$.
\end{definition}

\begin{theorem}[Burns-Hale, \cite{Burns_Hale}]\label{theorem:Burns_Hale}
Every locally indicable group is left orderable.
\end{theorem}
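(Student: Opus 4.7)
The plan is to deduce left orderability from a compactness-style reformulation together with an induction on the cardinality of finite test sets. By Lemma~\ref{lemma:LLO} it suffices to treat finitely generated subgroups, and I would work with the following standard criterion (provable from Zorn's Lemma on partial sign assignments to the elements of $G$): a group $G$ is left orderable if and only if for every finite subset $F \subset G\setminus\{1\}$ there exist signs $\epsilon : F \to \{\pm 1\}$ such that no nonempty product of elements of $\{f^{\epsilon(f)} : f \in F\}$ equals $1$. The forward direction takes $\epsilon(f)=+1$ exactly when $f$ lies in a fixed positive cone; the backward direction patches coherent local sign choices into a global positive cone via Zorn.

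Before starting the induction I would record that local indicability passes to subgroups: any finitely generated subgroup of $H \le G$ is a finitely generated subgroup of $G$, hence surjects to $\Z$. Thus every subgroup of a locally indicable group is itself locally indicable.

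I would then verify the criterion by induction on $|F|$. In the base case $F=\{g\}$, the nontrivial finitely generated cyclic group $\langle g \rangle$ surjects onto $\Z$, so $g$ has infinite order and either sign works. For the inductive step, let $H=\langle F\rangle$ and choose a surjection $\phi: H \to \Z$ using local indicability. Partition $F = F_+ \sqcup F_0 \sqcup F_-$ according to the sign of $\phi$ and set $\epsilon(f) = \text{sign}(\phi(f))$ for $f \in F_+\cup F_-$. Surjectivity forces $F_+ \cup F_- \neq \emptyset$, so $|F_0| < |F|$, and the inductive hypothesis applied inside the locally indicable group $\ker\phi$ furnishes signs on $F_0$. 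If a nonempty product $f_{i_1}^{\epsilon(f_{i_1})}\cdots f_{i_k}^{\epsilon(f_{i_k})}$ were equal to $1$ in $G$, then applying $\phi$ would give $\sum_j \epsilon(f_{i_j})\phi(f_{i_j}) = 0$; since each summand is nonnegative (strictly positive on $F_+\cup F_-$, zero on $F_0$), every letter must lie in $F_0$, giving a relation in $\ker\phi$ that contradicts the inductive choice.

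The main obstacle is to set the induction up so that it is well-founded: one cannot induct on the number of generators of $G$, because $\ker\phi$ may require many more generators than $G$ itself. The correct move is to induct instead on the size of the finite test set $F$, since the ``problematic'' subset $F_0$ is strictly smaller than $F$ as soon as $\phi$ is surjective. Everything else is bookkeeping; if one is unwilling to assume the finite-set criterion as background, a short Zorn's-Lemma argument packaging consistent finite sign assignments into a global positive cone completes the proof.
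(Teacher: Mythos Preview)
Your proof is correct and is essentially the classical Burns--Hale argument. Note, however, that the paper does not supply its own proof of this statement: Theorem~\ref{theorem:Burns_Hale} is listed among the background facts in \S\ref{section:background}, with the reader referred to \cite{Burns_Hale} and \cite{Calegari_foliations} for details. So there is nothing in the paper to compare your argument against beyond the bare citation; what you have written is precisely the standard proof one finds in those references, via the finite-subset (semigroup) criterion and induction on $|F|$ using a surjection to $\Z$.
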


\section{\texorpdfstring{$\PL$}{PL} group actions on cubes}\label{section:PL_groups}

\subsection{Definitions}

We assume the reader is familiar with the concept of a $\PL$ homeomorphism between compact
polyhedra in $\R^n$, namely one for which the domain can be subdivided into {\em finitely many}
linear simplices so that the restriction of the homeomorphism to each simplex is an
affine linear homeomorphism to its image. We say that the simplices in the domain are {\em linear
for $f$}. A {\em $\PL$ manifold} is one with charts modeled on $\R^n$ and
transition functions which are the restrictions of $\PL$ homeomorphisms.

We define the {\em dimension} of a (linear) polyhedron to be the maximum of the dimensions of
the simplices making it up, in any decomposition into simplices.

\begin{notation}
Let $M$ be a $\PL$ manifold (possibly with boundary) and let $K$ be a
closed $\PL$ submanifold of $M$. We denote by
$\PL(M,K)$ the group of $\PL$ self-homeomorphisms of $M$ which are fixed
pointwise on $K$. If $M$ is orientable, we denote by $\PL_+(M,K)$ the subgroup
of orientation-preserving $\PL$ self-homeomorphisms.
\end{notation}

The main example of interest is the following:

\begin{notation}
We denote by $I^n$ the cube $[-1,1]^n$ in Euclidean space with its standard $\PL$ structure.
Note with this convention that $0$ is a point in the {\em interior} of $I^n$. We denote the
boundary of $I^n$ by $\partial I^n$.
\end{notation}

So we denote by $\PL(I^n,\partial I^n)$ the group of $\PL$ self-homeomorphisms of the unit cube
$I^n$ in $\R^n$ which are fixed pointwise on the boundary. If $\omega$ denotes
the standard (Lesbesgue) volume form on $I^n$, we denote by $\PL_\omega(I^n,\partial I^n)$
the subgroup of $\PL(I^n,\partial I^n)$ preserving $\omega$.

\subsection{Transformation groups as discrete groups}

Let $G$ be a transformation group --- i.e.\/ a 
group of homeomorphisms of some topological space $X$. It is often useful to
endow $G$ with a topology compatible with the action; for instance, 
the {\em compact-open topology}. If we denote $G^\delta$ as the same group but
with the discrete topology, the identity homomorphism $G^\delta \to G$ is
a continuous map of topological groups, and induces maps on cohomology
$H^*(BG;R) \to H^*(BG^\delta;R) = H^*(K(G,1);R)$ for any coefficient module $R$.
Thus one interesting source of algebraic invariants of the discrete group $G$
arise by thinking of its homotopy type as a topological group.

However, the groups of most interest to us in this paper
are not very interesting as (homotopy types of) topological spaces:

\begin{proposition}[Alexander trick]\label{proposition:Alexander_trick}
The groups $\Homeo(I^n,\partial I^n)$, $\PL(I^n,\partial I^n)$, $\PL_\omega(I^n,\partial I^n)$
are all contractible in the compact-open topology.
\end{proposition}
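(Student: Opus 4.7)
The plan is to exhibit an explicit deformation retraction onto the identity using the Alexander isotopy, and then verify that (i) it stays inside each of the three groups, and (ii) it is continuous in the compact-open topology.

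For each $t\in(0,1]$ and each $f\in \Homeo(I^n,\partial I^n)$, define
\[
H_t(f)(x) \;=\; \begin{cases} t\,f(x/t) & \text{if } \|x\|_\infty \le t,\\ x & \text{if } \|x\|_\infty \ge t, \end{cases}
\]
and set $H_0(f)=\Id$. Because $f$ is the identity on $\partial I^n$, the two definitions agree on the boundary $\|x\|_\infty=t$, so $H_t(f)$ is a well-defined self-homeomorphism of $I^n$ fixing $\partial I^n$. Clearly $H_1(f)=f$, so this gives, for each $f$, a path from $\Id$ to $f$.

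Next I would check that the isotopy stays inside each of the three groups. In the PL case, scaling by $t$ and $1/t$ are PL and compose with $f$, so $H_t(f)$ is PL whenever $f$ is, and the two pieces fit together linearly on the boundary of the scaled cube. For $\PL_\omega$, note that $x\mapsto t f(x/t)$ has Jacobian matrix $Df(x/t)$, whose determinant equals $1$ whenever $f$ is volume-preserving, so the Alexander isotopy preserves $\omega$ when $f$ does. Thus the formula gives maps $H\colon [0,1]\times G \to G$ for $G=\Homeo(I^n,\partial I^n)$, $\PL(I^n,\partial I^n)$, or $\PL_\omega(I^n,\partial I^n)$.

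The main thing to be careful about is continuity of $H$ in the compact-open topology, particularly at $t=0$. Joint continuity at $(f,t)$ with $t>0$ is routine because scaling and composition are continuous. At $t=0$, observe that $H_t(f)(x)=x$ outside the cube of side $2t$, and on that cube both $H_t(f)(x)$ and $x$ lie in the cube, so
\[
\sup_{x\in I^n} \|H_t(f)(x)-x\|_\infty \;\le\; 2t,
\]
independently of $f$. Since $I^n$ is compact, uniform convergence coincides with convergence in the compact-open topology, so $H_t(f)\to \Id$ as $t\to 0$ uniformly in $f$. Thus $H$ is a continuous homotopy from the identity map of $G$ to the constant map at $\Id$, proving contractibility of each of the three groups.
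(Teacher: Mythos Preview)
Your proof is correct and follows essentially the same approach as the paper: both use the Alexander isotopy $x\mapsto t\,f(x/t)$ on the scaled cube and the identity outside. You supply more detail than the paper does --- in particular the verification that the isotopy preserves the PL and volume-preserving structures and the uniform estimate giving continuity at $t=0$ --- but the idea is identical.
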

\begin{proof}
Given $f:I^n \to I^n$ fixed pointwise on $\partial I^n$, define $f_t:I^n \to I^n$
by 
$$f_t(x):=\begin{cases}
tf(x/t) & \text{ if } 0\le \|x\| \le t \\
x & \text{ if } t \le \|x\| \le 1
\end{cases}$$
where $\|\cdot\|$ denotes the $L^\infty$ norm on $I^n$.
Then $f_0=f$ and $f_1 = \Id$, and the assignment $f \to f_t$ defines a deformation
retraction of any one of the groups in question to the identity.
\end{proof}

\subsection{Left orderability}\label{PL_LO_subsection}

In this section we prove the left orderability of certain groups of $\PL$
homeomorphisms.

The purpose of this section is to develop the tools to prove the following theorem:

\begin{theorem}[Locally indicable]\label{theorem:pillow}
Let $M$ be an $n$ dimensional connected $\PL$ manifold, and let $K$ be a
nonempty closed $\PL$ submanifold of codimension at most 1.
Then the group $\PL_+(M,K)$ is left orderable; in fact, it is locally
indicable.
\end{theorem}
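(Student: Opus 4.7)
The plan is to prove local indicability, from which left orderability follows by the Burns-Hale Theorem~\ref{theorem:Burns_Hale}; it therefore suffices to produce, for every finitely generated nontrivial subgroup $H\leq\PL_+(M,K)$, a surjection $H\twoheadrightarrow\Z$. We may assume $K$ has codimension exactly $1$: the codimension-$0$ case is either vacuous (connectedness forcing $K=M$ and $\PL_+(M,K)=\{\Id\}$) or reduces to the codimension-$1$ case via $\partial K$ (using that $\PL_+(M,K)\leq\PL_+(M,\partial K)$ and that subgroups of locally indicable groups are locally indicable).

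The approach is to extract a ``one-sided Jacobian'' homomorphism at a suitably chosen boundary point of $\fix(H)$. Fix generators $h_1,\dots,h_k$ of $H$, set $F:=\fix(H)=\bigcap_i\fix(h_i)$ (a PL subpolyhedron, being cut out on each simplex adapted to $h_i$ by linear equations), and pass to a common refinement $\mathcal{T}$ of the generators' triangulations in which $K$ and $F$ are both subcomplexes. Since $\emptyset\neq K\subseteq F\subsetneq M$ and $M$ is connected, connectedness of the dual graph of top-dimensional simplices of $\mathcal{T}$ yields a codimension-$1$ simplex $\tau$ of $\mathcal{T}$ contained in $F$ with some adjacent top-dimensional simplex $\sigma_+\not\subseteq F$. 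Pick $p$ in the interior of $\tau$. Each generator fixes $\tau$ pointwise, preserves orientation, and is affine on $\bar\sigma_+$, so its germ at $p$ from the $\sigma_+$-side is the linear map fixing $T_p\tau$ and sending a chosen unit normal $v$ into $\sigma_+$ to $\lambda_{h_i}v+w_{h_i}$ with $w_{h_i}\in T_p\tau$ and $\lambda_{h_i}>0$ (positivity from orientation preservation).

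The key claim is that the assignment $\phi\colon h\mapsto(w_h,\lambda_h)$ extends from generators to a nontrivial homomorphism $\phi\colon H\to\R^{n-1}\rtimes\R_+^*$. Extension is an induction on word length: if $h,g\in H$ have linear $\sigma_+$-germs at $p$ with matrices $A_h,A_g$, then because $A_g$ fixes $T_p\tau$ and scales the normal positively it preserves the local positive-side half-space at $p$, so $g$ maps a sufficiently small positive-side neighborhood $U$ of $p$ into the linearity region of $h$, forcing $hg|_U$ to be linear with matrix $A_hA_g$. Nontriviality follows from $\sigma_+\not\subseteq F$: since $F$ is a subcomplex, the interior of $\sigma_+$ is disjoint from $F$ and accumulates on $p$, so no positive-side neighborhood of $p$ can be fixed by all generators. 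Finally: if $\lambda\circ\phi$ is nontrivial, composing with $\log$ produces a nontrivial homomorphism $H\to\R$ whose finitely generated, hence free abelian, image surjects onto $\Z$; otherwise $\phi(H)$ lies in the abelian shear subgroup $\R^{n-1}$ and is itself a nontrivial finitely generated free abelian group, again surjecting onto $\Z$. Either way, $H\twoheadrightarrow\Z$.

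The main obstacle I anticipate is the geometric/PL step: arranging $\tau$ and $p$ so that the one-sided Jacobian defines a homomorphism on all of $H$, not merely on generators. The codimension-$1$ hypothesis is essential here---it is precisely what guarantees that orientation-preserving maps fixing $\tau$ preserve each local half-space at $p$, so that linear germs are closed under composition on a shrinking positive-side neighborhood. (In codimension $\geq 2$ this closure fails; the paper settles for circular orderability in that regime, cf.\ Theorem~\ref{theorem:PL_ropes_co}.) The remaining ingredients---existence of the boundary face $\tau$, nontriviality of $\phi$, and passage through free abelian groups to $\Z$---are routine consequences of PL structure and elementary algebra.
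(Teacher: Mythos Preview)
Your proposal is correct and follows essentially the same approach as the paper: locate a codimension-$1$ simplex in the frontier of $\fix(H)$ at which the one-sided linear germ yields a nontrivial homomorphism to $\R^{n-1}\rtimes\R^+$, then invoke local indicability of the target (the paper packages these steps as Lemmas~\ref{lemma:fix_and_frontier}--\ref{lemma:linear_stabilizer} and Example~\ref{example:codimension_1}). Your inductive argument extending $\phi$ from generators to all of $H$ makes explicit a step that the paper's Lemma~\ref{lemma:semi_linear} asserts in one clause (``therefore all of $G$ acts semilinearly''), and your reduction of the codimension-$0$ case to codimension $1$ handles a point the paper's proof leaves tacit.
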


An important special case is $M=I^n$ and $K=\partial I^n$.

First we introduce some notation and structure. Recall that if $X$ is a closed subset of $Y$,
the {\em frontier} of $X$ in $Y$ is the intersection of $X$ with closure of $Y-X$.

\begin{notation}
Let $f \in \PL(M,K)$. The {\em fixed point set} of $f$ is denoted $\fix(f)$ and
the {\em frontier} of $\fix(f)$ is denoted $\fro(f)$. Similarly, if 
$G$ is a subgroup of $\PL(M,K)$, 
the {\em fixed point set} of $G$ is denoted $\fix(G)$. The
{\em frontier of $\fix(G)$} is denoted $\fro(G)$. 
\end{notation}

\begin{lemma}\label{lemma:fix_and_frontier}
Let $M$ be an $n$ dimensional connected $\PL$ manifold, and let $K$ be a
nonempty $n-1$ dimensional closed $\PL$ submanifold.
Let $G$ be a nontrivial finitely generated subgroup of $\PL(M,K)$.
\begin{enumerate}
\item{$\fix(G)$ is a closed polyhedron of dimension at least $n-1$.}
\item{$\fro(G)$ is a polyhedron of dimension $n-1$.}
\item{If $g_1,\cdots,g_m$ is a finite generating set for $G$, then $\fro(G) \subset \cup_i \fro(g_i)$.}
\end{enumerate}
\end{lemma}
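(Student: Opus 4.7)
The plan is to dispatch the three parts in order, using the PL structure to reduce statements about $G$ to statements about a finite generating set $g_1,\ldots,g_m$ of $G$.

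For (1), the key observation is that $\fix(G) = \bigcap_{i=1}^m \fix(g_i)$, so it suffices to show each $\fix(g_i)$ is a closed polyhedron. Pick a triangulation of $M$ on which $g_i$ is simplicial (hence affine on each simplex $\sigma$); then $\fix(g_i)\cap\sigma$ is the intersection of $\sigma$ with the affine subspace $\{x : g_i(x)=x\}$, a closed convex polytope which subdivides into simplices. Taking the union over the finitely many simplices of the triangulation realizes $\fix(g_i)$ as a closed polyhedron, and a finite intersection of closed polyhedra is again a closed polyhedron. Since $K\subset\fix(G)$ and $\dim K = n-1$, the dimension bound $\dim\fix(G)\ge n-1$ is immediate.

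For (2), split on whether $\fix(G)$ has nonempty interior in $M$. Because $G$ is nontrivial and $M$ is connected, $\fix(G)$ is a proper closed subset of $M$, so $\fro(G)$ is nonempty. If $\dim\fix(G)=n-1$, then $\fix(G)$ has empty interior, so $\overline{M-\fix(G)}=M$ and hence $\fro(G) = \fix(G)$, of dimension $n-1$. If $\dim\fix(G)=n$, choose a triangulation in which $\fix(G)$ is a subcomplex; then $\fro(G)$ is precisely the union of those codimension-one faces of top-dimensional simplices of $\fix(G)$ which meet the closure of the complement, a polyhedron of pure dimension $n-1$.

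For (3), suppose $x\in\fro(G)$ but $x\notin\fro(g_i)$ for every $i$. Since $x\in\fix(G)\subset\fix(g_i)$, the assumption forces $x$ to lie in the topological interior of $\fix(g_i)$ in $M$, so there is an open neighborhood $U_i$ of $x$ with $U_i\subset\fix(g_i)$. The finite intersection $U = \bigcap_i U_i$ is an open neighborhood of $x$ on which every generator, and hence every element of $G$, acts as the identity. But then $U\subset\fix(G)$, placing $x$ in the interior of $\fix(G)$ and contradicting $x\in\fro(G)$. The main obstacle is not conceptual; the only delicate point is the pure $(n-1)$-dimensionality claim in (2), which is exactly why the two-case split on whether $\fix(G)$ has interior is convenient.
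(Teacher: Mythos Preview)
Your proof is correct and follows essentially the same route as the paper's: $\fix(G)=\bigcap_i \fix(g_i)$ for (1), a case split on whether $\fix(G)$ has interior for (2), and the contrapositive observation that if every generator fixes a neighborhood of $p$ then so does $G$ for (3). One small caution in your treatment of (2): when $\dim\fix(G)=n$, your description of $\fro(G)$ as \emph{precisely} the union of certain $(n-1)$-faces, and the assertion of \emph{pure} dimension $n-1$, are slight overstatements --- $\fix(G)$ may well have lower-dimensional pieces protruding from its $n$-dimensional part, and those also lie in the frontier --- but this does not affect the conclusion the lemma actually asks for, namely $\dim\fro(G)=n-1$ (the paper itself argues only the lower bound via separation and connectedness of $M$).
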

\begin{proof}
Let $g_1,\cdots,g_m$ be a finite generating set for $G$. 

(1) We have $\fix(G) = \cap_i \fix(g_i)$.
Since each $\fix(g_i)$ is a closed polyhedron containing $K$, so is $\fix(G)$.

(2) If $\fix(G)$ has no interior, then its complement is open and dense, so $\fro(G)=\fix(G)$.
Otherwise, $\fix(G)$ has some interior, and $\fro(G)$ separates some point in the interior from
some point in $M - \fix(G)$. Since $M$ is connected, $\fro(G)$ has dimension at least $n-1$.

(3) Every point $p$ in $\fro(G)$ is in $\fix(g_i)$ for all $i$, and for some $i$ there are points
arbitrarily near $p$ moved nontrivially by $g_i$. Thus $p \in \fro(g_i)$ for this $i$.
\end{proof}

\begin{definition}
Let $G$ be a subgroup of $\PL(M,K)$.
The action of $G$ is {\em semilinear} at a point $p$ if there is some codimension 1
plane $\pi$ through $p$ and a convex open neighborhood $U$ of $p$ so that the restriction of
$G$ is linear on both components of $U - (\pi \cap U)$. We call $\pi$ the {\em dividing plane}.
\end{definition}

Note with this definition that a linear action at a point is semilinear. Note also that 
if $G$ is semilinear at a point $p$ but not linear there, the dividing plane is unique.

\begin{lemma}\label{lemma:semi_linear}
Let $M$ be an $n$ dimensional connected $\PL$ manifold, and let $K$ be a
nonempty $n-1$ dimensional closed $\PL$ submanifold.
Let $G$ be a finitely generated subgroup of $\PL(M,K)$.
Then for every $n-1$ dimensional linear simplex $\sigma$ in $\fro(G)$, there is an open and
dense subset of $\sigma$ where $G$ is semilinear, and the dividing plane is tangent to $\sigma$.
\end{lemma}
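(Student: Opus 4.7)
The plan is to produce the required open dense subset of $\sigma$ by finding a single PL triangulation adapted simultaneously to a finite generating set of $G$ and to $\sigma$. First I fix generators $g_1,\ldots,g_m$ of $G$, and for each $g_i$ choose a PL triangulation of a neighborhood of $\sigma$ in $M$ on which $g_i$ is simplexwise linear. Iteratively taking linear common subdivisions produces one triangulation $T$ on which all $g_i$ are simultaneously simplexwise linear and for which $\sigma$ is a subcomplex, i.e.\ a union of $(n-1)$-simplices of $T$. Each of these $(n-1)$-simplices spans the same affine hyperplane $\pi$, namely the unique hyperplane containing $\sigma$; this will be the dividing plane.

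Let $B\subset\sigma$ be the union of the relative boundary $\partial\sigma$ with the intersection of $\sigma$ and the $(n-2)$-skeleton of $T$ (absorbing also $\sigma\cap\partial M$ and any lower-dimensional part of $\sigma\cap K$ for safety). Then $B$ is a closed, nowhere-dense subpolyhedron of $\sigma$, so $\sigma\setminus B$ is open and dense. For any $p\in\sigma\setminus B$ the point $p$ lies in the interior of some $(n-1)$-simplex $\sigma'\subset\sigma$ of $T$, and since $M$ is an $n$-manifold there are exactly two $n$-simplices $\tau^+,\tau^-$ of $T$ sharing $\sigma'$ as a codimension-one face. I pick a small convex open neighborhood $U$ of $p$ inside $\tau^+\cup\sigma'\cup\tau^-$, so that $U\setminus\pi$ splits as $U^+\sqcup U^-$ with $U^\pm\subset\tau^\pm$. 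Each generator $g_i$ is affine linear on $\tau^\pm$, so its restriction to $U^\pm$ agrees with an affine map $L_i^\pm$; since $g_i$ fixes $\sigma'$ pointwise and $\sigma'$ spans $\pi$, the map $L_i^\pm$ fixes $\pi$ pointwise, hence has the shear-scaling form $(x',x_n)\mapsto(x'+x_nv_i^\pm,\lambda_i^\pm x_n)$ in coordinates with $\pi=\{x_n=0\}$, and therefore sends each side of $\pi$ into a single side.

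This already gives semilinearity at $p$ for every generator with common dividing plane $\pi$. To extend to all of $G$, I use that the germs at $p$ of PL maps which fix $\pi$ pointwise and are affine linear on each side of $\pi$ are closed under composition: such a germ carries a small half-neighborhood of $p$ linearly onto a half-neighborhood on one side of $\pi$, where a subsequent germ of the same type is again affine. Since $G$ is generated by the $g_i$, every $g\in G$ has a germ at $p$ of this form, which establishes semilinearity of $G$ at $p$ with dividing plane $\pi$; as $\pi$ is by construction the affine hyperplane containing $\sigma$, it is tangent to $\sigma$. The main obstacle I expect is the first step, namely the standard but technically delicate PL common-refinement argument needed to realize $\sigma$ as a subcomplex of a triangulation that linearizes all generators simultaneously; once that is set up, the identification of $\pi$ with the tangent hyperplane of $\sigma$ is forced by local geometry.
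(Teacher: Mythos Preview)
Your proof is correct. The approach is close to the paper's but organized a bit differently: you first pass to a single common triangulation $T$ (linearizing all $g_i$ simultaneously and containing $\sigma$ as a subcomplex) and then remove its $(n-2)$-skeleton, whereas the paper works with each generator's own triangulation separately, removing for each $g_i$ an $(n-2)$-dimensional bad set $b_i\subset\sigma$ and invoking the additional observation that at points of $\sigma$ not in $\fro(g_i)$ the element $g_i$ is the identity on a neighborhood (hence trivially semilinear there). Your version trades that observation for the common-refinement step, which, as you note, is standard PL machinery; once it is in place, the remainder of your argument---in particular the closure of the relevant germs under composition to pass from generators to all of $G$---matches the paper's exactly.
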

\begin{proof}
Fix a finite generating set $g_1,\cdots,g_m$ for $G$, and recall from 
Lemma~\ref{lemma:fix_and_frontier} bullet (3) that $\fro(G) \subset \cup_i \fro(g_i)$.
Let $\sigma$ be an $n-1$ dimensional simplex in $\fro(G)$, and suppose $\sigma \cap \fro(g_i)$ has
full dimension. Let $\sigma_i = \sigma \cap \fro(g_i)$.
Associated to $g_i$ there is a decomposition of $M$ into linear simplices; away from
the $n-2$ skeleton of this decomposition, $g_i$ acts semilinearly.
So $g_i$ acts semilinearly on the complement of an $n-2$ dimensional polyhedral subset $b_i$
of $\sigma_i$. For each point $p \in \sigma_i - b_i$ the action of $g_i$ on the tangent plane
to $\sigma_i$ is the identity (since $\sigma_i$ is fixed by $g_i$) and therefore we can
take the dividing plane to be equal to this tangent plane. For, either the action at $p$ is
semilinear but not linear (in which case any codimension 1 plane on which the action is linear is
the unique dividing plane), or the action at $p$ is linear, in which case any codimension 1 plane
can be taking to be the dividing plane. Thus the dividing plane of $g_i$ is tangent to
$\sigma_i$ along $\sigma_i - b_i$. Furthermore, since $\sigma - \sigma_i$ is in $\fix(g_i)$ but not $\fro(g_i)$,
the element $g_i$ acts trivially in a neighborhood of each point of $\sigma - \sigma_i$.

It follows that each $g_i$ acts semilinearly at each point of $\sigma - \cup_i b_i$ with
dividing plane tangent to $\sigma$, and therefore all of $G$ acts semilinearly at each point
of $\sigma - \cup_i b_i$ with dividing plane tangent to $\sigma$.
\end{proof}

\begin{lemma}\label{lemma:linear_stabilizer}
The (pointwise) stabilizer in $\GL(n,\R)$ of an $m$ dimensional subspace $\pi$ of $\R^n$
is isomorphic to $\R^{m(n-m)} \rtimes \GL(m,\R)$ and the subgroup preserving
orientation is isomorphic to $\R^{m(n-m)} \rtimes \GL^+(m,\R)$.
\end{lemma}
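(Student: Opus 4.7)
The plan is to choose a basis of $\R^n$ adapted to $\pi$, read off an explicit matrix normal form for elements of the pointwise stabilizer, and recognize the resulting set of matrices as a semidirect product.

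First I would pick a basis $e_1,\ldots,e_n$ of $\R^n$ with $e_1,\ldots,e_m$ spanning $\pi$ and $e_{m+1},\ldots,e_n$ spanning a chosen complementary subspace. An element $M \in \GL(n,\R)$ fixes $\pi$ pointwise precisely when $Me_i = e_i$ for $1 \le i \le m$, equivalently when the first $m$ columns of $M$ are $e_1,\ldots,e_m$. In the chosen basis this forces
\[
M = \begin{pmatrix} I_m & A \\ 0 & B \end{pmatrix},
\]
where $A$ is an arbitrary $m\times(n-m)$ real matrix and $B$ is an invertible square block on the complementary factor; indeed the invertibility of $M$ is equivalent to that of $B$, since $\det M = \det B$.

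Next I would verify the semidirect product structure. The subgroup $N$ of matrices with $B = I$ is isomorphic to $(\R^{m(n-m)},+)$ via $M \leftrightarrow A$, while the subgroup $H$ of matrices with $A = 0$ is isomorphic to the general linear group of the complement via $M \leftrightarrow B$. Every element of the stabilizer factors uniquely as an element of $N$ times an element of $H$, and the direct computation
\[
\begin{pmatrix} I & 0 \\ 0 & B \end{pmatrix} \begin{pmatrix} I & A \\ 0 & I \end{pmatrix} \begin{pmatrix} I & 0 \\ 0 & B^{-1} \end{pmatrix} = \begin{pmatrix} I & AB^{-1} \\ 0 & I \end{pmatrix}
\]
shows that $N$ is normal and pins down the action of $H$ on $N$ by the standard representation. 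The orientation-preserving case is immediate: $\det M > 0$ if and only if $\det B > 0$, so the decomposition restricts with $\GL^+$ replacing $\GL$ in the second factor.

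There is no real obstacle to speak of; the argument is nothing more than a block-matrix decomposition, and the only care required is in transcribing the semidirect product action correctly from the conjugation formula above.
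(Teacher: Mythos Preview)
Your approach is essentially the paper's: pick an adapted basis, write down the block upper-triangular form, and read off the semidirect product. The paper is terser---it displays the block form and says ``the proof follows''---whereas you explicitly verify normality of the unipotent factor and the conjugation action, which is a harmless elaboration.

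One bookkeeping point worth flagging: in your decomposition the block $B$ lies in $\GL(n-m,\R)$ (the linear group of a complement to $\pi$), not in $\GL(m,\R)$ as the lemma literally states. This is a slip in the lemma rather than in your argument: the pointwise stabilizer of an $m$-dimensional subspace induces an arbitrary linear automorphism of the $(n-m)$-dimensional quotient, and the paper's own codimension-one example immediately following the lemma (taking $m=n-1$ and obtaining $\R^{n-1}\rtimes\R^*$, i.e.\ $\GL(1,\R)$ in the second factor) confirms that $\GL(n-m,\R)$ is intended. Your computation proves the correct statement; you might simply note the discrepancy when you write it up.
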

\begin{proof}
The subgroup of $\GL(n,\R)$ fixing $\pi$ can be conjugated into the set of 
matrices of the form $\left(\begin{smallmatrix} \Id & V \\ 0 & A \end{smallmatrix}\right)$
where $\Id$ is the identity in $\GL(n-m,\R)$, where $V$ is an arbitrary $(n-m)\times m$ matrix, 
and where $A \in \GL(m,\R)$. The proof follows.
\end{proof}

\begin{example}[Codimension 1 stabilizer]\label{example:codimension_1}
In the special case $m=n-1$ the stabilizer is isomorphic to $\R^{n-1}\rtimes \R^*$,
and the orientation preserving subgroup is isomorphic to $\R^{n-1} \rtimes \R^+$ 
which is an extension of the locally indicable group $\R^+$ by the locally 
indicable group $\R^{n-1}$, and is therefore locally indicable.
\end{example}

We are now ready to give the proof of Theorem~\ref{theorem:pillow}:

\begin{proof}
By Theorem~\ref{theorem:Burns_Hale} it suffices to show that every nontrivial
finitely generated subgroup of
$\PL_+(M,K)$ surjects onto $\Z$. Let $G$ be such a subgroup, and let $p$ be a point in
a codimension 1 simplex in $\fro(G)$ where $G$ acts semilinearly. Since $p$ is in $\fro(G)$,
some $g_i$ acts nontrivially on one side of $\fro(G)$, so there is a {\em nontrivial} homomorphism
from $G$ to $\R^{n-1}\rtimes \R^+$ which is locally indicable, and therefore $G$ surjects
onto $\Z$. Since $G$ was arbitrary, the theorem is proved.
\end{proof}

\subsection{Free subgroups of \texorpdfstring{$\PL(I^n,\partial I^n)$}{PLIn}}

In this section we discuss algebraic properties of the groups $\PL(I^n,\partial I^n)$
and their subgroups. The case $n=1$ is well-studied, and one knows the following striking theorem:

\begin{theorem}[Brin-Squier, \cite{Brin_Squier}]\label{theorem:Brin_Squier}
The group $\PL(I,\partial I)$ obeys no law, but does not contain a nonabelian free subgroup.
\end{theorem}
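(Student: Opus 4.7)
The theorem splits into two independent assertions---non-existence of a rank-$2$ free subgroup, and non-existence of a nontrivial law---and I plan to treat them separately.

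For the first assertion, I would argue by contradiction, following a support-disjointness trick due to Brin--Squier. Suppose $\langle a,b\rangle\cong F_2$ lives inside $\PL(I,\partial I)$. Every nontrivial element $h$ of this subgroup is $\PL$ and fixes $\partial I$, so its support is a finite disjoint union of open intervals compactly contained in the interior of $I$. The crux is to produce an element $u\in\langle a,b\rangle$ such that $u(\mathrm{supp}(h))\cap \mathrm{supp}(h)=\emptyset$. Given such a $u$, the elements $h$ and $uhu^{-1}$ have disjoint supports and therefore commute; but in a rank-$2$ free group, any two commuting nontrivial elements are powers of a common element, forcing $\mathrm{supp}(h)=\mathrm{supp}(uhu^{-1})$ and contradicting disjointness. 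To produce $u$, I would induct on the number of interval components of $\mathrm{supp}(h)$ as $h$ ranges over nontrivial elements of $\langle a,b\rangle$, picking $h$ of minimal complexity and using the freeness hypothesis to manufacture commutators of conjugates that either reduce complexity further (contradicting minimality) or have support disjoint from $\mathrm{supp}(h)$ (supplying the desired $u$).

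For the second assertion, the plan is to produce, for each nontrivial reduced word $w(x_1,\dots,x_k)$, elements $f_1,\dots,f_k\in\PL(I,\partial I)$ with $w(f_1,\dots,f_k)\neq \Id$. A full ping-pong construction is impossible since it would produce a free subgroup of rank $2$ and contradict the first half; instead I would use a depth-bounded ping-pong tailored to $n=|w|$. Concretely, choose a chain of disjoint intervals in $I$ realizing a truncated fragment of the Cayley graph of $F_k$ up to radius $n$, and construct $f_1,\dots,f_k$ whose restrictions realize the expected free action on this fragment (while the maps behave arbitrarily elsewhere, producing relations at larger depth). A marked point at the center of the configuration is then demonstrably displaced by $w(f_1,\dots,f_k)$, so the word evaluates nontrivially.

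The hard part is the induction in the first assertion: convincing oneself that, within any putative free subgroup, one can always exhibit by algebraic manipulation a displacing element $u$, using only the ambient $F_2$. This requires leveraging both the $\PL$ condition (which forces combinatorial finiteness of supports and hence allows a well-founded induction on complexity) and the algebraic freeness (which supplies enough elements with prescribed commutator behavior to carry out the inductive step). The second assertion is easier, since any specific word only needs finite-depth combinatorial control.
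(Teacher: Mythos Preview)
The paper does not give a proof of this theorem; it is stated with a citation to Brin--Squier \cite{Brin_Squier} and invoked only as a point of contrast with the higher-dimensional case (where nonabelian free subgroups do exist). There is therefore no argument in the paper against which to compare your proposal.

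For what it is worth, your sketch of the first assertion follows the Brin--Squier strategy in spirit, but one crucial step is left implicit. Before you can run the displacement argument, you must first pass to an element such as $h=[a,b]$ whose support has \emph{compact closure} inside each component $J$ of $\mathrm{supp}\langle a,b\rangle$; this holds because the germs of $a$ and $b$ at each endpoint of $J$ are linear (hence commute), so $[a,b]$ is the identity near those endpoints. That compactness is exactly what makes the displacement lemma work: on a component $J$ with no global fixed point, any compact subset can be pushed off itself by some element of the group, since orbits are unbounded in $J$. Your inductive reduction on the number of components is then correct in outline: if $\mathrm{supp}(h)$ meets several $J_i$, displace the $J_1$-part by some $g_1$ and pass to $[h,\,g_1hg_1^{-1}]$; either this commutator is trivial (and the cyclic-centralizer property of $F_2$ forces $\mathrm{supp}(h)=g_1(\mathrm{supp}(h))$, a contradiction) or it is a nontrivial element supported in strictly fewer components. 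The finite-depth ping-pong for the ``no law'' assertion is a standard and correct approach.
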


On the other hand, we will shortly see that the group $\PL(I^n,\partial I^n)$ contains nonabelian
free subgroups for all $n>1$.

\begin{lemma}\label{lemma:free_in_PL_2}
The group $\PL(I^2,\partial I^2)$ contains nonabelian free subgroups.
\end{lemma}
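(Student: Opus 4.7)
The plan is a direct ping-pong argument. Since any PL 2-disk $D$ embedded in the interior of $I^2$ yields an injection $\PL(D,\partial D)\hookrightarrow \PL(I^2,\partial I^2)$ via extension by the identity, it suffices to produce a rank-2 free subgroup inside $\PL(D,\partial D)$ for some PL disk $D$.

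First I would fix two PL sub-disks $D_1, D_2 \subset D^\circ$ whose intersection $E := D_1 \cap D_2$ is itself a sub-disk, with neither disk containing the other, together with four distinct points $p_1, p_1^-, p_2, p_2^- \in E^\circ$. Construct $f \in \PL(D_1, \partial D_1)$ (extended by the identity off $D_1$) as a PL source-sink automorphism of $D_1$: its fixed set is $\{p_1, p_1^-\} \cup \partial D_1$, with $p_1$ attracting and $p_1^-$ repelling. By modeling $f$ near each interior fixed point on an affine hyperbolic map and interpolating on a fine triangulation that collapses toward $\partial D_1$, one can moreover arrange that $f$ sends the complement in $D_1$ of a preassigned small neighborhood $V^-$ of $p_1^-$ and a thin collar of $\partial D_1$ into a preassigned small neighborhood $V^+$ of $p_1$, with the symmetric statement for $f^{-1}$. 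Construct $g$ analogously on $D_2$ with attractor $p_2$ and repeller $p_2^-$.

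Now set $X_f := V^+ \cup V^- \subset E^\circ$ and let $X_g$ be the analogous union for $g$, chosen small enough that $X_f \cap X_g = \emptyset$ and $X_f, X_g \subset E^\circ$ (hence each is contained in both $D_1$ and $D_2$). Because $X_g \subset D_1$ and $X_g$ avoids the exceptional region of $f$, the single-step contraction gives $f(X_g) \subset V^+$, whence $f^n(X_g) \subset V^+ \subset X_f$ for every $n \ge 1$; symmetrically $f^{-n}(X_g) \subset V^- \subset X_f$ for $n \ge 1$. The analogous inclusions for $g$ hold by construction, so the ping-pong lemma produces $\langle f, g\rangle \cong F_2$.

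The main obstacle is the explicit PL realization of $f$: producing a PL self-homeomorphism of the disk that fixes the boundary pointwise, has a prescribed pair of interior hyperbolic fixed points, and achieves the strong one-step contraction demanded by the ping-pong setup. I would handle this by triangulating $D_1$ so that $V^\pm$, the collar of $\partial D_1$, and the intermediate transit zones become unions of simplices, then specifying the map vertex-by-vertex using a hyperbolic affine model near $p_1$ and $p_1^-$ and extending linearly on the remaining simplices. The extra structural freedom available in dimension $\ge 2$, absent from $\PL(I, \partial I)$ by the Brin-Squier theorem (Theorem~\ref{theorem:Brin_Squier}), is precisely what makes this construction possible.
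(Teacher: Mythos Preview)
Your ping-pong strategy is a genuinely different route from the paper's. The paper's proof is a three-line linearization argument: for each $M\in\GL(2,\R)$ choose $g_M\in\PL(I^2,\partial I^2)$ fixing $0$, linear near $0$, with $dg_M(0)=M$; then $g\mapsto dg(0)$ gives a surjection from the group generated by the $g_M$ onto $\GL(2,\R)$, and any free subgroup of $\GL(2,\R)$ lifts via a section. No explicit dynamics, no ping-pong, no delicate PL construction: the derivative at a common fixed point is the whole mechanism. Your approach, by contrast, builds two explicit source--sink PL maps and plays ping-pong on small neighborhoods of the four interior fixed points. If it works it is more self-contained (it does not invoke free subgroups of linear groups), but it is considerably heavier.

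There are two soft spots in your sketch worth flagging. First, the assertion that one can take $\fix(f)$ to be \emph{exactly} $\{p_1,p_1^-\}\cup\partial D_1$ is suspect: a sink and a source each contribute index $+1$, so an index count forces further fixed behavior, which in your picture would have to hide in the collar $C$ where the map is unspecified. Fortunately your ping-pong never uses this exact description of $\fix(f)$; it only needs $X_g\subset D_1\setminus(V^-\cup C)$ and the one-step contraction $f(D_1\setminus(V^-\cup C))\subset V^+$, together with the symmetric statements. So drop the overclaim and nothing is lost. Second, the actual construction of $f$ is the crux and is only gestured at. ``Specify on vertices and extend linearly on simplices'' does not automatically produce a homeomorphism; the huge distortion across the thin collar (identity on $\partial D_1$, yet sending the inner edge of $C$ into the small disk $V^+$) means injectivity of the simplexwise-affine extension must be checked, not assumed. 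This is doable with enough care in the triangulation, but it is real work that your proposal does not supply, whereas the paper's argument sidesteps all of it.
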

\begin{proof}
For any $M \in \GL(2,\R)$ there is some $g_M$ in $\PL(I^2,\partial I^2)$ which fixes $0$, which
is linear near $0$, and which satisfies $dg_M(0) = M$. Let $G$ be the group generated by the
$g_M$. Then $G$ fixes $0$ and is linear there, so there is a surjective homomorphism 
$G \to \GL(2,\R)$. If $F$ is a nonabelian free subgroup of $\GL(2,\R)$, there is a section from
$F$ to $G$ whose image is a free subgroup of $G$.
\end{proof}

In fact we will shortly see (Theorem~\ref{theorem:RAAG}) 
that every right-angled Artin group embeds in $\PL(I^2,\partial I^2)$.

\begin{lemma}\label{lemma:suspension_of_PL}
For all $n$ there is an injective homomorphism 
$$S:\PL(I^n,\partial I^n) \to \PL(I^{n+1},\partial I^{n+1})$$
called the {\em suspension homomorphism}.
\end{lemma}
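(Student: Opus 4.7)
The plan is to realise $S$ by a bipyramid construction: view $I^{n+1}=I^n\times[-1,1]$ and let
$$P = \{(x,t) \in I^n \times [-1,1] : \|x\|_\infty \le 1-|t|\}$$
be the bipyramid with base $I^n \times \{0\}$ and apices $(0,\pm 1)$. For $f\in \PL(I^n,\partial I^n)$ define
$$S(f)(x,t) = \begin{cases} \bigl((1-|t|)\,f\!\left(x/(1-|t|)\right),\, t\bigr) & (x,t)\in P,\\ (x,t) & (x,t)\notin P.\end{cases}$$
On the central slice $t=0$ this is $(f(x),0)$, and it degenerates to the identity as $|t|\to 1$; on $\partial I^n\times[-1,1]$ (where $\|x\|_\infty=1>1-|t|$ for $|t|>0$) we are outside $P$, so $S(f)$ fixes $\partial I^{n+1}$ pointwise.

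The main verification is that $S(f)$ is $\PL$. The key observation is that if $f$ is affine on a simplex $\sigma\subset I^n$, say $f(y)=Ay+b$, then on the cone $C_{\pm}\sigma\subset P$ with apex $(0,\pm 1)$ over $\sigma$ (parametrised by $x=(1-|t|)y$), we get
$$S(f)(x,t)=\bigl(Ax+(1-|t|)b,\,t\bigr),$$
which is affine in $(x,t)$. Hence a triangulation of $I^n$ on which $f$ is linear lifts, via cones from $(0,\pm1)$, to a triangulation of $P$ on which $S(f)$ is linear. On $\partial P \cap P$ we have $\|x/(1-|t|)\|_\infty=1$, so $f$ fixes this point and the formula yields $S(f)(x,t)=(x,t)$; thus $S(f)$ matches the identity along $\partial P$, and any $\PL$ extension of the boundary triangulation of $P$ to the complement $\overline{I^{n+1}\setminus P}$ makes $S(f)$ globally $\PL$. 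That such an extension exists is a standard application of $\PL$ topology (for instance, coning the cells of $\partial P$ from points in the complement, or using a regular neighbourhood structure).

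The homomorphism property is a direct computation: if $(x,t)\in P$ then $y:=x/(1-|t|)\in I^n$ and $g(y)\in I^n$, so $S(g)(x,t)=((1-|t|)g(y),t)\in P$, and hence
$$S(f)\circ S(g)(x,t)=\bigl((1-|t|)f(g(y)),t\bigr)=S(fg)(x,t);$$
outside $P$ both sides are the identity. Applied with $g=f^{-1}$, this also shows $S(f)$ is a bijection with $S(f)^{-1}=S(f^{-1})$, and continuity across $\partial P$ was already checked, so $S(f)\in\PL_+(I^{n+1},\partial I^{n+1})$. Finally, injectivity is immediate: restricting $S(f)=\mathrm{Id}$ to the slice $t=0$ gives $f=\mathrm{Id}$.

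The only step with any subtlety is producing the $\PL$ triangulation of the complement of $P$ compatibly with the cone triangulation of $P$; everything else is formal. Since the complement is a $\PL$ manifold-with-boundary and we are only extending a triangulation of its boundary, this is routine, so I would dispose of it with a single appeal to standard $\PL$ topology rather than grinding out an explicit triangulation.
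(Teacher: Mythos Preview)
Your proof is correct and follows essentially the same approach as the paper: both construct the bipyramid $P$ over $I^n\times\{0\}$ with apices $(0,\pm 1)$, extend $f$ by coning to the apices, and then extend by the identity on $I^{n+1}\setminus P$. Your version is more explicit---you write down the formula $S(f)(x,t)=((1-|t|)f(x/(1-|t|)),t)$ and verify affineness, the homomorphism property, and injectivity directly---whereas the paper simply asserts that coning each simplex $\sigma$ to $S^\pm\sigma$ and sending it affinely to $S^\pm f(\sigma)$ does the job; but the underlying construction is identical.
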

\begin{proof}
We include $I^n$ into $I^{n+1}$ as the subset with last coordinate equal to $0$. The
{\em bipyramid} $P^{n+1}$ is the convex hull of $I^n$ and the two points $(0,0,\cdots,0,\pm 1)$.
For every simplex $\sigma$ in $I^n$ there are two simplices $S^{\pm}\sigma$ of one dimension higher,
obtained by coning $\sigma$ to $(0,0,\cdots,0,\pm 1)$. If $f$ is a $\PL$ homeomorphism of
$I^n$ fixed on the boundary, and $\sigma$ is a simplex in $I^n$ linear for $f$, then there is
a unique $\PL$ homeomorphism $Sf$ of $P^{n+1}$ which takes each $S^{\pm}\sigma$ linearly to
$S^{\pm}f(\sigma)$. The map $f \to Sf$ defines an injective homomorphism 
$\PL(I^n,\partial I^n) \to \PL(P^{n+1},\partial P^{n+1})$. Then any element of
$\PL(P^{n+1},\partial P^{n+1})$ can be extended by the identity on $I^{n+1} - P^{n+1}$ to an
element of $\PL(I^{n+1},\partial I^{n+1})$.
\end{proof}

\begin{corollary}\label{corollary:free_subgroup}
The group $\PL(I^n,\partial I^n)$ contains a nonabelian free subgroup for all $n>1$.
\end{corollary}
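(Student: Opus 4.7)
The plan is essentially a one-line induction, since the statement is a near-immediate corollary of the two lemmas immediately preceding it. The base case $n=2$ is exactly Lemma~\ref{lemma:free_in_PL_2}, which produces a nonabelian free subgroup $F \le \PL(I^2,\partial I^2)$ by pulling back a free subgroup of $\GL(2,\R)$ through the linearization-at-$0$ homomorphism.

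For the inductive step I would apply Lemma~\ref{lemma:suspension_of_PL} repeatedly: given an injective homomorphism $S:\PL(I^k,\partial I^k) \to \PL(I^{k+1},\partial I^{k+1})$, the image $S(F)$ of a nonabelian free subgroup $F$ is isomorphic to $F$ (since $S$ is injective) and hence is again a nonabelian free subgroup. Iterating $S$ a total of $n-2$ times embeds $F$ into $\PL(I^n,\partial I^n)$, which gives the result for all $n \ge 2$.

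There is really no obstacle here; the substantive content has already been packaged into Lemmas~\ref{lemma:free_in_PL_2} and~\ref{lemma:suspension_of_PL}. The only thing worth emphasizing in the write-up is that freeness is preserved by the injective homomorphism, so no further argument (e.g.\ a ping-pong or a new construction in higher dimensions) is needed.
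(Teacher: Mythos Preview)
Your proposal is correct and matches the paper's own proof essentially verbatim: the paper simply writes ``By Lemma~\ref{lemma:free_in_PL_2}, Lemma~\ref{lemma:suspension_of_PL} and induction.'' Your observation that injectivity of $S$ preserves freeness is the only point that needs to be implicit, and it is.
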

\begin{proof}
By Lemma~\ref{lemma:free_in_PL_2}, Lemma~\ref{lemma:suspension_of_PL} and induction.
\end{proof}

Actually, we will see many more constructions of free subgroups of $\PL(I^n,\partial I^n)$
in the sequel.

\subsection{Area preserving subgroups}

We are interested in the following subgroup of $\PL(I^2,\partial I^2)$.

\begin{definition}
Let $\omega$ denote the (standard) area form on $I^2$. Let $\PL_\omega(I^2,\partial I^2)$
denote the subgroup of $\PL(I^2,\partial I^2)$ consisting of transformations which preserve $\omega$.
\end{definition}

The group $\PL_\omega(I^2,\partial I^2)$ contains many interesting subgroups, and has a
rich algebraic structure. We give some indications of this.

\begin{example}[Dehn twist]\label{example:Dehn_twist}
Figure~\ref{elementary_twist} depicts an area-preserving PL homeomorphism of a square,
which preserves the foliation by concentric squares, and (for suitable choices
of edge lengths) whose 12th power is a Dehn twist (in fact, this transformation is contained
in a 1-parameter subgroup of $\PL_\omega(I^2,\partial I^2)$ consisting of powers of a Dehn
twist at discrete times).
\begin{figure}[htpb]
\labellist
\small\hair 2pt
\pinlabel $\xrightarrow{\hspace*{1cm}}$ at 300 125
\endlabellist
\centering
\includegraphics[scale=0.5]{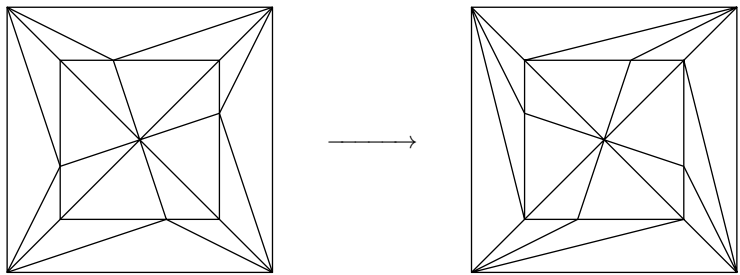}
\caption{A 12th root of a PL Dehn twist}\label{elementary_twist}
\end{figure}
\end{example}

The following theorem is due to Gratza \cite{Gratza}; in fact he proved
the analogous statement for {\em piecewise linear symplectic} automorphisms of
$I^{2n}$. 

\begin{theorem}[Gratza]\label{theorem:density}
The group $\PL_\omega(I^2,\partial I^2)$ is dense in $\Diff_\omega(I^2,\partial I^2)$,
in the $C^0$ topology.
\end{theorem}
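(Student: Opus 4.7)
Given $f \in \Diff_\omega(I^2, \partial I^2)$ and $\epsilon > 0$, my goal is to produce $g \in \PL_\omega(I^2, \partial I^2)$ with $\|g - f\|_{C^0} < \epsilon$. The plan is a two-step ``approximate and correct'': first approximate $f$ by a general PL map via vertex interpolation, and then compose with a small PL area-redistributing homeomorphism to restore exact area preservation.

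For the approximation step, pick a simplicial triangulation $T$ of $I^2$ containing $\partial I^2$ in its $1$-skeleton, and define $g_0 \in \PL(I^2, \partial I^2)$ to be the unique simplicial map with $g_0(v) = f(v)$ on every vertex $v$ of $T$. Uniform continuity of $f$ makes $\|g_0 - f\|_{C^0}$ arbitrarily small once $\operatorname{mesh}(T)$ is small; and because $f$ is $C^1$ with $\det Df \equiv 1$, the constant Jacobian $J_\sigma := |g_0(\sigma)|/|\sigma|$ on each $2$-simplex $\sigma$ satisfies $|J_\sigma - 1| = O(\operatorname{mesh}(T))$, with constant depending only on $\|f\|_{C^1}$.

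For the correction step, let $\phi \in L^\infty(I^2)$ be the piecewise-constant density equal to $J_\sigma$ on each $\sigma$; note $\int \phi\,\omega = \int \omega$ since $g_0$ is a bijection of $I^2$. I seek $h \in \PL(I^2, \partial I^2)$, $C^0$-close to $\Id$, with $h^*\omega = \phi \cdot \omega$ on a common refinement. Then $g := g_0 \circ h^{-1}$ is a PL area-preserving homeomorphism $C^0$-close to $g_0$, hence to $f$. To construct $h$, I would use a PL analog of the Moser trick: for each interior edge $e$ of $T$ separating triangles $\sigma_1$ and $\sigma_2$, an ``edge-slide'' supported in $\sigma_1 \cup \sigma_2$ and fixed on its outer boundary transfers a prescribed small area from $\sigma_2$ to $\sigma_1$ in a $C^0$-small way. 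The collection of required area transfers forms a $1$-chain on the dual graph of $T$ whose boundary is the $0$-chain $\{J_\sigma - 1\}_\sigma$; since this $0$-chain has total sum zero it is a boundary, and taking the $1$-chain of minimal $\ell^2$ norm gives an $h$ with $\|h - \Id\|_{C^0} = O(\operatorname{mesh}(T))$.

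The main obstacle is this last claim: ensuring that the $O(\operatorname{mesh}(T)^{-2})$ individual slides, each of magnitude $O(\operatorname{mesh}(T))$, compose to a map of $C^0$ norm $o(1)$. The cancellation required is governed by the flow structure on the dual graph of $T$, and making it precise is essentially the content of a PL Dacorogna--Moser theorem for polygonal densities: two piecewise-constant probability densities on $I^2$ that are close in $L^\infty$ are related by a PL homeomorphism fixed on $\partial I^2$ and $C^0$-close to $\Id$. I expect Gratza's paper to prove a version of precisely this statement (indeed, in all even dimensions with symplectic densities in place of area densities).
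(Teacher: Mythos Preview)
The paper does not contain a proof of this theorem at all: it is stated as a result of Gratza, with a citation to his ETH thesis \cite{Gratza}, and is then used as a black box in the quasimorphism construction that follows. So there is no ``paper's own proof'' to compare your proposal against.

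That said, your outline is a reasonable sketch of how such a result is typically approached, and you have correctly located the real difficulty. A few remarks. First, a small gap you do not flag: the simplicial map $g_0$ obtained by vertex interpolation is not automatically a homeomorphism; you need to argue (using that $f$ is $C^1$ with nondegenerate derivative and the mesh is small) that each simplex maps with positive orientation and that the resulting local homeomorphism is globally injective. Second, your ``edge-slide'' correction step glosses over a genuine issue: the slides are defined relative to a fixed triangulation, but after performing one slide the triangulation has changed, so composing many of them requires either working on a common refinement or showing the slides can be chosen with disjoint supports after suitable grouping. Third, and most importantly, you yourself identify that the heart of the matter is the quantitative PL Moser/Dacorogna--Moser statement: that two nearby piecewise-constant densities of equal mass are related by a PL homeomorphism $C^0$-close to the identity. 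You are right that this is essentially the content of Gratza's argument (carried out more generally in the symplectic setting on $I^{2n}$), and your proposal does not prove it but defers to his thesis --- which is exactly what the present paper does too.
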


Gratza's theorem has the following application: it can be used to certify that
$\PL_\omega(I^2,\partial I^2)$ admits an infinite dimensional space of
nontrivial {\em quasimorphisms} (i.e.\/ quasimorphisms which are not homomorphisms). 
By contrast, no subgroup of $\PL(I,\partial I)$
admits a nontrivial quasimorphism, by \cite{Calegari_PL}.
For an introduction to the theory of quasimorphisms and its relation to stable
commutator length, see \cite{Calegari_scl}.

\begin{definition}
Let $G$ be a group. A {\em homogeneous quasimorphism} on $G$ is a function
$\phi:G \to \R$ satisfying the following properties:
\begin{enumerate}
\item{(homogeneity) for any $g\in G$ and any $n \in \Z$ we have $\phi(g^n) = n\phi(g)$; and}
\item{(quasimorphism) there is a least non-negative number $D(\phi)$ (called
the {\em defect}) so that for all $g,h \in G$ there is an inequality
$$|\phi(gh) - \phi(g) - \phi(h)| \le D(\phi)$$}
\end{enumerate}
The (real vector) space of homogeneous quasimorphisms on $G$ is denoted $Q(G)$.
\end{definition}

A function satisfying the second condition but not the first is said to be
(simply) a {\em quasimorphism}. If $\phi:G \to \R$ is any quasimorphism, the function
$\bar{\phi}:G \to \R$ defined by $\bar{\phi}(g)=\lim_{n \to \infty} \phi(g^n)/n$ is
a homogeneous quasimorphism, and satisfies $\bar{\phi} - \phi \le D(\phi)$.
This operation is called {\em homogenization} of quasimorphisms.

A homogeneous quasimorphism has defect $0$ if and only if it is a homomorphism to
$G$. Thus $D$ descends to a {\em norm} on the quotient space $Q(G)/H^1(G;\R)$.
It is a fact that $Q/H^1$ with the defect norm is a {\em Banach space}.

It is known that $Q/H^1$ vanishes on any subgroup $G$ of $\PL(I,\partial I)$.
By contrast, we show that the subgroup $\PL_\omega(I^2,\partial I^2)$ 
admits an infinite dimensional $Q/H^1$.

The proof is by explicit construction, and based on a general method due to
Gambaudo-Ghys \cite{Gambaudo_Ghys}. The construction is as follows.
First, fix some $n$ and let $B_n$ denote the braid group on $n$ strands, and
fix $n$ distinct points $x_1^0,\cdots,x_n^0$ in
the interior of $I^2$.
Let $\mu:B_n \to \R$ be any function. 

Now, since $\Homeo(I^2,\partial I^2)$ is path-connected and simply-connected,
for any $g \in \PL_\omega(I^2,\partial I^2)$ there is a unique homotopy class of
isotopy $g_t$ from $g$ to $\Id$.

For any (generic) $n$-tuple of distinct points
$x_1,\cdots,x_n$ in the interior of $I^2$, and any $g \in \PL_\omega(I^2,\partial I^2)$,
let $\gamma(g;x_1,\cdots,x_n) \in B_n$ be the braid obtained by first moving the
$x_i^0$ in a straight line to the $x_i$, then composing with the isotopy $g_t$ from
$x_i$ to $g(x_i)$, then finally moving the $g(x_i)$ in a straight line back to the $x_i^0$
(note that $\gamma$ does not depend on the choice of $g_t$, since $\Homeo(I^2,\partial I^2)$
is simply-connected). 

Now we define
$$\Phi_\mu(g) = \int_{I^2\times \cdots \times I^2} \mu(\gamma(g;x_1,\cdots,x_n))d\omega(x_1)d\omega(x_2)\cdots d\omega(x_n)$$
where the integral is taken over the subset of the product of $n$ copies of (the interior of)
$I^2$ consisting of distinct $n$-tuples of points where $\gamma$ is well-defined.

\begin{lemma}\label{lemma:quasimorphism}
Suppose $\mu$ is a quasimorphism on $B_n$. Then $\Phi_\mu$ is a quasimorphism on
$\PL_\omega(I^2,\partial I^2)$
\end{lemma}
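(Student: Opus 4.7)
The plan is to adapt the Gambaudo--Ghys cocycle argument to the PL area-preserving setting. The central algebraic fact is the braid identity
$$\gamma(gh;\mathbf{x}) \;=\; \gamma(g;h(\mathbf{x})) \cdot \gamma(h;\mathbf{x}) \quad \text{in } B_n,$$
valid for $\omega^n$-almost every $\mathbf{x}=(x_1,\ldots,x_n)\in (I^2)^n$. I would verify this by concatenating the two defining loops on the right-hand side: after $\gamma(h;\mathbf{x})$ returns to the basepoints $x_i^0$, the loop $\gamma(g;h(\mathbf{x}))$ begins with straight segments from $x_i^0$ to $h(x_i)$ that cancel, up to homotopy rel endpoints in the ordered configuration space of $n$ points, with the terminal segments of $\gamma(h;\mathbf{x})$. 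The surviving concatenation represents the same braid class as $\gamma(gh;\mathbf{x})$ because $\Homeo(I^2,\partial I^2)$ is simply connected by the Alexander trick (Proposition~\ref{proposition:Alexander_trick}); in particular any two isotopies from $\Id$ to $gh$ are homotopic rel endpoints and produce the same braid.

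Granted the cocycle identity, the quasimorphism property of $\mu$ yields the pointwise inequality
$$\bigl|\mu(\gamma(gh;\mathbf{x})) - \mu(\gamma(g;h(\mathbf{x}))) - \mu(\gamma(h;\mathbf{x}))\bigr| \;\le\; D(\mu).$$
Integrating over $\mathbf{x}$ and invoking the decisive hypothesis $h\in \PL_\omega(I^2,\partial I^2)$, the change of variables $\mathbf{y}=h(\mathbf{x})$ (whose Jacobian product is $1$) gives
$$\int \mu(\gamma(g;h(\mathbf{x})))\,d\omega^n(\mathbf{x}) \;=\; \int \mu(\gamma(g;\mathbf{y}))\,d\omega^n(\mathbf{y}) \;=\; \Phi_\mu(g).$$
Assembling these pieces yields $|\Phi_\mu(gh)-\Phi_\mu(g)-\Phi_\mu(h)|\le D(\mu)\cdot \omega(I^2)^n$, which is the desired quasimorphism bound.

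The main obstacle, I expect, is not the algebra above but the subsidiary analytic check that $\Phi_\mu(g)$ is a finite, well-defined integral, since $\mu$ on $B_n$ may be unbounded. To handle this I would exploit the PL hypothesis: fix a simplicial decomposition of $I^2$ linear for $g$ and a PL isotopy $g_t$ realizing it, and observe that the map $\mathbf{x}\mapsto \gamma(g;\mathbf{x})$ is locally constant off a piecewise-linear codimension-one subset of $(I^2)^n$, with only finitely many connected components in its complement. Hence $\gamma(g;\cdot)$ takes only finitely many values on a full-measure set, so $\mu\circ\gamma(g;\cdot)$ is a bounded, piecewise-constant, manifestly integrable function. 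A secondary subtlety is verifying that the cancellation of basepoint-to-$h(x_i)$ segments really is a homotopy in configuration space rather than merely in $I^2$; this again follows from genericity of $\mathbf{x}$ off a measure-zero set, and is the step most in need of careful bookkeeping.
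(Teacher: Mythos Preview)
Your argument is correct and follows the same Gambaudo--Ghys route as the paper: establish the braid cocycle identity $\gamma(gh;\mathbf{x})=\gamma(h;\mathbf{x})\,\gamma(g;h(\mathbf{x}))$ (the paper writes the factors in the opposite order, a harmless convention difference), apply the defect bound for $\mu$ pointwise, and integrate using the area-preserving change of variables $\mathbf{y}=h(\mathbf{x})$. You are in fact more explicit than the paper on two points it leaves implicit---the change-of-variables step and the finiteness of the integral via the PL structure---while the paper adds a remark on basepoint-independence that is not strictly needed for the quasimorphism inequality itself.
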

\begin{proof}
Changing the vector $x^0$ to a new vector $y^0$ changes $\gamma$ by multiplication by
one of finitely many elements of $B_n$; since $\mu$ is a quasimorphism, this changes
$\Phi_\mu$ by a bounded amount.

For any two $g,h \in \PL_\omega(I^2,\partial I^2)$ and generic $x_1,\cdots,x_n$ we have
$$\gamma(gh;x_1,\cdots,x_n) = \gamma(h;x_1,\cdots,x_n)\gamma(g;h(x_1),\cdots,h(x_n))$$
Now integrate over $I^2\times \cdots \times I^2$ and use the fact that $\mu$ is
a quasimorphism to see that $\Phi_\mu$ is a quasimorphism.
\end{proof}

The homogenization $\overline{\Phi}_\mu$ is a homogeneous quasimorphism, and one may
check that it is nontrivial whenever $\overline{\mu}$ is nontrivial on $B_n$.
In fact, it is easiest to check this for the special case of homogeneous quasimorphisms
on $B_n$ that vanish on reducible elements. Such quasimorphisms are plentiful;
for example, any ``counting quasimorphism'' arising from a weakly properly
discontinuous action of $B_n$ on a hyperbolic simplicial complex.
See \cite{Calegari_scl}, \S~3.6 for more details.

\begin{lemma}
Let $\overline{\mu}$ be nontrivial on $B_n$ and vanish on all reducible elements. 
Then $\overline{\Phi}_\mu$ is
nontrivial on $\PL_\omega(I^2,\partial I^2)$.
\end{lemma}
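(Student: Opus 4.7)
The plan is to apply the Gambaudo--Ghys trick to exhibit a particular $g \in \PL_\omega(I^2,\partial I^2)$ on which $\overline{\Phi}_\mu$ is nonzero. Since $\overline{\mu}$ is nontrivial on $B_n$ and vanishes on every reducible element, Nielsen--Thurston classification forces the existence of a pseudo-Anosov $\beta \in B_n$ with $\overline{\mu}(\beta)\neq 0$; fix such a $\beta$.

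Next, pick $n$ interior points $p_1,\dots,p_n$ in a small subdisk $D\subset I^2$, and construct a map $g\in\PL_\omega(I^2,\partial I^2)$ that is the identity outside $D$, whose restriction to $D$ realizes the mapping class $\beta$ on $\{p_1,\dots,p_n\}$, and which is affine linear in a small neighborhood of each $p_i$. Such a $g$ can be produced either by taking a smooth area-preserving realization of $\beta$ on $(D,\partial D)$ and approximating it by elements of $\PL_\omega(D,\partial D)$ via Gratza's density theorem (Theorem~\ref{theorem:density}), or by an explicit product of suspended PL Dehn twists in the style of Example~\ref{example:Dehn_twist}. The affine-linearity at the $p_i$ can be arranged by pre- and post-conjugating with $\PL_\omega$-maps that straighten the dynamics near each marked point.

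To show $\overline{\Phi}_\mu(g)\neq 0$ I compute $\overline{\Phi}_\mu(g)=\lim_{k\to\infty}\Phi_\mu(g^k)/k$ by splitting the integration domain $(I^2)^n$ into three pieces. First, tuples $\vec x$ with some $x_i\notin D$: the strand of $\gamma(g^k;\vec x)$ at such an $x_i$ is trivial, so $\gamma(g^k;\vec x)$ is reducible; thus $\overline{\mu}$ vanishes on it and, by the quasimorphism inequality, $|\mu(\gamma(g^k;\vec x))|\le CD(\mu)$ for all $k$, contributing $O(1/k)$ to the normalized integral. Second, tuples $\vec x$ in a small neighborhood $U$ of $(p_1,\dots,p_n)$ inside $D^n$: here the trajectories of the $x_i$ under the canonical isotopy shadow those of the $p_i$, so one obtains $\gamma(g^k;\vec x)=\beta^k c_{\vec x,k}$ with $c_{\vec x,k}$ ranging over a subset of $B_n$ bounded independently of $k$; the quasimorphism property then gives $\mu(\gamma(g^k;\vec x))/k\to\overline{\mu}(\beta)$ uniformly on $U$, and the contribution of $U$ tends to $\omega^n(U)\cdot\overline{\mu}(\beta)\neq 0$. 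Third, tuples in $D^n\setminus U$: the braids $\gamma(g;\vec x)$ are drawn from a compact family, their $k$-th powers stay within bounded distance of powers in that family, and we again pick up only $O(1/k)$.

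The main obstacle is the shadowing claim in the second region: that the PL isotopy from $g^k$ to the identity, restricted to $U$, genuinely realizes the braid $\beta^k$ on $\vec x$ with a uniformly bounded correction. This is what the affine-linear normal form near each $p_i$ is for --- it makes the isotopy near $p_i$ a literal linear interpolation, so the trajectory of any $x_i$ near $p_i$ is a uniformly bounded homotopy perturbation of the trajectory of $p_i$ for every $k$. Once this local shadowing is established, the rest of the argument is the standard Gambaudo--Ghys computation, and the three-region estimate above yields $\overline{\Phi}_\mu(g)=\omega^n(U)\cdot\overline{\mu}(\beta)\neq 0$.
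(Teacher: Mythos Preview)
Your third-region estimate is where the argument breaks. By the cocycle identity in Lemma~\ref{lemma:quasimorphism},
\[
\gamma(g^k;\vec x)=\gamma(g;\vec x)\,\gamma(g;g(\vec x))\cdots\gamma(g;g^{k-1}(\vec x)),
\]
a product of $k$ braids along the $g$-orbit of $\vec x$, not the $k$-th power of a single element of your compact family. Hence $|\mu(\gamma(g^k;\vec x))|$ is only bounded by $O(k)$, and after dividing by $k$ the contribution of $D^n\setminus U$ to $\Phi_\mu(g^k)/k$ is $O(1)$ times its measure. That measure is essentially $\omega(D)^n$, whereas your good region $U$ is a small neighborhood of a single point of $D^n$ and contributes only $\omega^n(U)\cdot\overline{\mu}(\beta)$. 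The uncontrolled third region therefore swamps the signal from $U$, and you cannot conclude $\overline{\Phi}_\mu(g)\ne 0$.

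This is exactly the obstacle the paper's construction is designed to sidestep. Rather than marking $n$ \emph{points}, the paper tiles almost all of $I^2$ by $n$ thin rectangles $R_1,\dots,R_n$ and builds $\phi_b$ to permute them by translations so as to realize the braid $b$. Then the set of tuples with the $x_i$ in pairwise distinct rectangles has measure about $n!/n^n$ (not infinitesimal), and on it the induced braid is conjugate to $b$; tuples with two $x_i$ in the same rectangle give \emph{reducible} braids, on which $\overline{\mu}$ vanishes by hypothesis; and the only genuinely uncontrolled region --- where some $x_i$ lies in the $\epsilon$-margins outside the rectangles --- has measure $O(\epsilon)$. One obtains $\overline{\Phi}_\mu(\phi_b)=(n!/n^n)\,\overline{\mu}(b)+O(\epsilon)$, and taking $\epsilon\to 0$ finishes the proof. (Your second-region shadowing claim is also insecure as stated: unless the linear part of $g$ at each $p_i$ is the identity, nearby points drift out of the affine charts under iteration and the correction $c_{\vec x,k}$ is not bounded uniformly in $k$. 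But even with that repaired, the third-region gap is decisive.)
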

\begin{proof}
For $1\le i\le n$ let $R_i$ be the rectangle with lower left corner
$(\frac {i-1}{n}+\epsilon,\epsilon)$ and upper right corner $(\frac {i}{n} -\epsilon,1-\epsilon)$.
For any braid $b \in B_n$ we can build an area-preserving
homeomorphism $\phi_b$ which permutes
the $R_i$, taking each $R_i$ to its image by a translation, and which performs the
conjugacy class of the braid $b$ on each $n$ tuple of points of the form
$(x,x+1,\cdots,x+n-1)$ for $x\in R_1$. 

If we are systematic about the way we extend $\phi_b$ to $I^2 - \cup_i R_i$ 
(i.e.\/ by decomposing $\phi_b$ into a product of standard ``elementary'' moves,
corresponding to the factorization of $b$ into elementary braids) we can
ensure that $\phi_b$ satisfies an estimate of the form
$|\mu(\gamma(\phi_b;x_1,\cdots,x_n))| \le C\cdot |b|\cdot D(\mu)$
where $|b|$ denotes the word length of $b$, where $D(\mu)$ denotes the defect of
$\mu$, and where $C$ is some constant depending only on $n$, but {\em not} on
$\epsilon$. It is straightforward to build such an area-preserving homeomorphism;
to see that it can be approximated by a $\PL$ area-preserving homeomorphism with
similar properties, we appeal to Gratza's Theorem~\ref{theorem:density}.

For any $n$-tuple $x:=(x_1,\cdots,x_n)$ where the $x_i$ are all contained in
distinct $R_j$, the powers of $\phi_b$ on $x$ are conjugates of the powers of
$b$. For $n$-tuples where two $x_i$ are in the same $R_j$, the powers of $\phi_b$
on $x$ are reducible. Thus we can estimate
$$\overline{\Phi}_\mu(\phi_b) = (n!/n^n)\overline{\mu}(b) + O(\epsilon)$$
Taking $b$ to be a braid on which $\overline{\mu}$ is nonzero, and
taking $\epsilon \to 0$ we obtain the desired result.
\end{proof}

In particular, $Q(\PL_\omega(I^2,\partial I^2))$ is infinite dimensional.
This should be contrasted with the 1-dimensional case, where it is shown
in \cite{Calegari_PL} that for {\em any} subgroup $G$ of $\PL(I,\partial I)$
the natural map $H^1(G) \to Q(G)$ is surjective.

\subsection{Right-angled Artin groups}

Recall that a {\em right-angled Artin group} (hereafter a RAAG) associated to a (finite
simplicial) graph $\Gamma$ is the group with one generator for each vertex of $\Gamma$,
and with the relation that two generators commute if the corresponding vertices are joined
by an edge, and with no other relations. We denote this group by $A(\Gamma)$

Funar \cite{Funar} discovered a powerful method to embed certain RAAGs in 
transformation groups. To describe Funar's method, it is convenient to
work with the complement graph $\Gamma^c$, which has the same vertex set as $\Gamma$,
and which has an edge between two distinct vertices if and only if $\Gamma$ does {\em not}
have an edge between these vertices. Thus: two vertices of $\Gamma^c$ are joined by an
edge if and only if the corresponding generators of the RAAG do not commute.

Let $S$ be a surface, and for each vertex $i$ of $\Gamma^c$,
let $\gamma_i$ be an embedded circle in $S$, chosen with the following properties:
\begin{enumerate}
\item{the various $\gamma_i$ intersect transversely; and}
\item{two circles $\gamma_i$ and $\gamma_j$ intersect if and only if the corresponding
vertices of $\Gamma^c$ are joined by an edge.}
\end{enumerate}
We say in this case that the pattern of intersection of the $\gamma_i$ {\em realizes}
$\Gamma^c$.

\begin{theorem}[Funar \cite{Funar}]\label{theorem:Funar}
With notation as above, let $\tau_i$ denote a Dehn twist in a sufficiently small
tubular neighborhood of $\gamma_i$. Then for sufficiently big $N$, the group generated
by the $\tau_i^N$ is isomorphic to $A(\Gamma)$ by an isomorphism in which the $\tau_i^N$
become the ``standard'' generators.
\end{theorem}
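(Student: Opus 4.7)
The plan is to define the obvious homomorphism $\phi \colon A(\Gamma) \to \langle \tau_1^N,\ldots,\tau_k^N\rangle$ by sending each standard generator $v_i$ of $A(\Gamma)$ to $\tau_i^N$, and then to show injectivity by a ping-pong argument adapted to right-angled Artin groups. For $\phi$ to be well-defined one needs only that $[v_i,v_j]=1$ maps to the identity whenever $ij$ is an edge of $\Gamma$; by definition of $\Gamma^c$, this means $\gamma_i$ and $\gamma_j$ are disjoint, and after shrinking the supporting tubular neighborhoods I may assume $\tau_i$ and $\tau_j$ have disjoint supports, hence commute. Thus $\phi$ is a well-defined surjection, and the remainder of the proof is to show $\ker\phi=1$.

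For injectivity I invoke the Servatius normal form theorem: every element of $A(\Gamma)$ has a reduced expression, unique up to commutation moves on adjacent commuting letters. It therefore suffices to prove that if $w=v_{i_1}^{\epsilon_1}\cdots v_{i_m}^{\epsilon_m}$ is a nontrivial reduced word then $\phi(w)\ne \Id$. The plan is to exhibit a test curve $\alpha_0 \subset S$ meeting every $\gamma_i$ essentially, and to show that $\phi(w)(\alpha_0)\ne \alpha_0$ by tracking its motion under the successive twists.

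The engine is Thurston's intersection inequality for Dehn twists: for any essential simple closed curves $\alpha,\beta$ and any $N\ge 1$,
$$\bigl|\,i(\tau_i^N\alpha,\beta)-N\cdot i(\alpha,\gamma_i)\cdot i(\gamma_i,\beta)\,\bigr|\le i(\alpha,\beta).$$
For $N$ large this forces $\tau_i^{\pm N}$ to carry any curve meeting $\gamma_i$ essentially into an arbitrarily small neighborhood $U_i$ of $[\gamma_i]$ in $\mathcal{PML}(S)$, while fixing every curve disjoint from $\gamma_i$. Choosing the $U_i$ pairwise disjoint, I read $w$ from right to left and induct on its length: at each step, if the current letter is $v_{i_j}^{\epsilon_j}$ and the current image of $\alpha_0$ is not already disjoint from $\gamma_{i_j}$, the next image lands in $U_{i_j}$; if it is already disjoint from $\gamma_{i_j}$, nothing changes.

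The main obstacle, and the whole reason one needs $N$ large, is that an individual Dehn twist does not have true north--south dynamics on $\mathcal{PML}$: its attracting and repelling sets both collapse to the single point $[\gamma_i]$. Classical two-set ping-pong therefore does not apply, and one must use a RAAG-adapted version (as in Crisp--Farb or Koberda). The combinatorial input is the normal-form property that in a reduced word no two occurrences of $v_i^{\pm 1}$ can be brought adjacent without the intervening letters all commuting with $v_i$; this ensures that whenever $w$ is read past a letter $v_{i_j}^{\epsilon_j}$ whose curve $\gamma_{i_j}$ meets the current image of $\alpha_0$, a genuine geometric motion occurs, and that successive motions do not cancel. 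Making this quantitative via the intersection inequality yields a uniform threshold $N_0 = N_0(\Gamma,\{\gamma_i\})$ beyond which every nonempty reduced word maps to a nontrivial element, completing the proof.
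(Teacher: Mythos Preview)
The paper does not prove this theorem; it is stated as a result of Funar and cited without proof, then used as a black box in the proof of Theorem~\ref{theorem:RAAG}. So there is no ``paper's own proof'' to compare against.

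That said, your sketch is broadly the right shape---well-definedness from disjoint supports, injectivity via a ping-pong/intersection-number argument on $\mathcal{PML}(S)$ using the Ivanov--Thurston inequality---and is close in spirit to the arguments of Koberda and of Crisp--Paris (the attribution ``Crisp--Farb'' is not standard; you likely mean one of these). A few cautions if you intend this as more than a sketch. First, the case analysis ``if the current image is disjoint from $\gamma_{i_j}$, nothing changes'' is where the real work hides: you must argue that in a reduced word this case cannot persist long enough to return you to a neighborhood you have already left, and the bookkeeping for this (tracking which $U_i$ the curve currently lies near, and how commuting letters permute harmlessly) is exactly the content of the RAAG ping-pong lemma you allude to. Second, the existence of a simple closed curve $\alpha_0$ meeting every $\gamma_i$ essentially is not automatic on an arbitrary surface and should be justified (or replaced by a family of test curves). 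Third, the neighborhoods $U_i$ in $\mathcal{PML}(S)$ cannot literally be taken pairwise disjoint when two $\gamma_i$ are disjoint isotopic curves; one typically works instead with projective classes of weighted multicurves supported near subsets of the $\gamma_i$, or with subsurface projections. None of these is fatal, but each requires a page or two that your outline elides.
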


Kim and Koberda \cite{Kim-Koberda} applied Funar's technique to show that any 
finitely-generated RAAG embeds in $\Diff_\omega^\infty(D^2, \partial D^2)$. 
A similar argument applies to the PL case, as we now outline.  
Although not every finite graph embeds in the disk, the following theorem saves the day.

\begin{theorem}[Kim-Koberda \cite{Kim-Koberda}]\label{theorem:tree}  For each finite graph 
$\Gamma$ there exists a finite tree $T$ such that $A(\Gamma)$ is isomorphic with a subgroup of $A(T^c)$.
\end{theorem}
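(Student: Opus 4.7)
My plan is to invoke the extension graph technology from Kim and Koberda's earlier work. Given a finite simplicial graph $\Lambda$, the \emph{extension graph} $\Lambda^e$ has as vertex set the collection of conjugates $gvg^{-1}$ with $v$ a vertex of $\Lambda$ and $g \in A(\Lambda)$, where two such conjugates are identified when they generate the same cyclic subgroup, and two distinct vertices are joined by an edge precisely when the corresponding group elements commute in $A(\Lambda)$. The Kim--Koberda criterion asserts that $A(\Gamma)$ is isomorphic with a subgroup of $A(\Lambda)$ if and only if $\Gamma$ embeds as a finite induced subgraph of $\Lambda^e$. Granting this criterion, the theorem reduces to the purely combinatorial statement that, for each finite $\Gamma$, there is a finite tree $T$ such that $\Gamma$ appears as an induced subgraph of $(T^c)^e$.

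To build such a $T$, I would proceed constructively. Label the vertices of $\Gamma$ as $v_1,\ldots,v_n$ and start with $T$ a long path containing an independent set of $n$ vertices $w_1,\ldots,w_n$; these form an $n$-clique in $T^c$, giving an induced $K_n$ inside $T^c$ (and hence inside $(T^c)^e$). The non-edges of $\Gamma$ are then handled one at a time. For each non-edge $\{v_i,v_j\}$ of $\Gamma$, I attach to $T$ an auxiliary \emph{separator} vertex $s_{ij}$ that is adjacent in $T$ (hence non-adjacent in $T^c$) to both $w_i$ and $w_j$, and I replace $w_i$ by the conjugate $s_{ij} w_i s_{ij}^{-1}$, which lies in $(T^c)^e$ and no longer commutes with $w_j$ while, if the tree is large enough, retaining its commutation relations with every other $w_k$. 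Iterating this procedure across all non-edges of $\Gamma$, and enlarging $T$ as needed to keep the separators disjoint and well-separated from the $w_k$, produces $n$ vertices in $(T^c)^e$ whose induced subgraph is $\Gamma$.

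The hard part is the induced-subgraph requirement: after performing all these conjugations, one must certify that no spurious commutations remain among the chosen vertices of $(T^c)^e$, and also that none of the desired commutations have been accidentally destroyed. This is delicate because introducing one separator to break a commutation may interact with others. The cleanest way to handle it is via the centralizer theorem for RAAGs, which states that two elements of $A(T^c)$ commute if and only if their supports lie in a common join subgraph of $T^c$; since $T$ is a tree, such joins are tightly constrained (they correspond to independent sets of $T$ together with their neighborhoods), and one can arrange the positions of the separators $s_{ij}$ so that the supports of the final conjugates interact exactly as prescribed by the edges of $\Gamma$. Once this induced embedding of $\Gamma$ into $(T^c)^e$ is established, the Kim--Koberda criterion delivers $A(\Gamma) \hookrightarrow A(T^c)$.
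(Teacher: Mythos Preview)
The paper does not prove this theorem; it is quoted from Kim--Koberda and used as a black box in the proof of Theorem~\ref{theorem:RAAG}. So there is no argument in the paper to compare against, and I can only assess your proposal on its own merits.

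There is a genuine gap in your construction. You start with $T$ a path containing an independent set $w_1,\ldots,w_n$, and then for each non-edge $\{v_i,v_j\}$ of $\Gamma$ you ``attach to $T$ an auxiliary separator vertex $s_{ij}$ that is adjacent in $T$ \ldots\ to both $w_i$ and $w_j$.'' But $w_i$ and $w_j$ are already joined by a path in $T$, so adjoining a new vertex with edges to both of them creates a cycle: the resulting graph is no longer a tree. This cannot be repaired by ``enlarging $T$ as needed''; in any tree, adding a vertex adjacent to two existing vertices necessarily destroys acyclicity. So the object you hand to the extension-graph criterion is not of the required form, and the conclusion that $A(\Gamma)$ embeds in $A(T^c)$ with $T$ a \emph{tree} is not obtained. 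The actual Kim--Koberda construction does use extension-graph ideas, but the tree and the conjugates are chosen much more carefully; one does not attach separator leaves to pairs of base vertices.

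A smaller point: the extension-graph criterion you invoke is, in the generality you state it, only an implication (an induced embedding $\Gamma\hookrightarrow\Lambda^e$ yields $A(\Gamma)\hookrightarrow A(\Lambda)$), not an equivalence. This does not affect the logic of your argument, since only that direction is used, but the ``if and only if'' should be weakened.
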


In fact Kim-Koberda show that $A(\Gamma)$ embeds quasi-isometrically in $A(T^c)$.

\begin{theorem}\label{theorem:RAAG}
For any $m\ge 2$, every (finitely generated) RAAG embeds in $\PL_\omega(I^m,\partial I^m)$.
\end{theorem}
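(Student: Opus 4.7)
The plan is to combine the Kim-Koberda tree reduction (Theorem~\ref{theorem:tree}) with Funar's construction (Theorem~\ref{theorem:Funar}) to first embed into $\PL_\omega(I^2,\partial I^2)$, and then promote this to higher-dimensional cubes by iterating the suspension map of Lemma~\ref{lemma:suspension_of_PL}.

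By Theorem~\ref{theorem:tree} it suffices to embed $A(T^c)$ into $\PL_\omega(I^2,\partial I^2)$ for an arbitrary finite tree $T$. To apply Funar's Theorem~\ref{theorem:Funar} with $\Gamma := T^c$, I need PL simple closed curves $\gamma_1,\ldots,\gamma_k$ in the interior of $I^2$, indexed by vertices of $T$, whose pattern of intersection realizes $(T^c)^c=T$; that is, $\gamma_i\cap\gamma_j\neq\emptyset$ precisely when $v_iv_j$ is an edge of $T$. Since $T$ is planar, I would embed $T$ linearly in $I^2$ and take each $\gamma_i$ to be a simple PL loop enclosing $v_i$ and extending in a thin strip slightly past the midpoint of each edge incident to $v_i$, without reaching any adjacent vertex. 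Two such loops then cross transversely exactly along their shared edge strip, which happens iff the corresponding vertices span an edge of $T$. For each $i$ I would construct a PL area-preserving Dehn twist $\tau_i$ supported in a thin annular neighborhood of $\gamma_i$, modelled on the area-preserving PL twist of Example~\ref{example:Dehn_twist}. Funar's Theorem~\ref{theorem:Funar} then produces an $N$ for which $\tau_1^N,\ldots,\tau_k^N$ generate a copy of $A(T^c)$ in $\PL_\omega(I^2,\partial I^2)$; composing with the embedding from Theorem~\ref{theorem:tree} yields the desired embedding of $A(\Gamma)$.

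To pass from $m=2$ to general $m\geq 2$, I would iterate the suspension homomorphism $S$ of Lemma~\ref{lemma:suspension_of_PL} after verifying that it restricts to the volume-preserving subgroups. On $S^+\sigma$ the map $Sf$ takes the form $(y,z)\mapsto((1-z)f(y/(1-z)),z)$ in ambient $\mathbb{R}^{n+1}$ coordinates, and a direct computation shows that its Jacobian matrix is block triangular with $df$ acting on the $y$-coordinates and $1$ on the $z$-coordinate, so $\det dSf=\det df=1$ whenever $f$ is volume-preserving (and similarly on $S^-\sigma$). Thus $S$ restricts to an injective homomorphism $\PL_\omega(I^n,\partial I^n)\hookrightarrow\PL_\omega(I^{n+1},\partial I^{n+1})$, and iterating gives $\PL_\omega(I^2,\partial I^2)\hookrightarrow\PL_\omega(I^m,\partial I^m)$ for all $m\geq 2$.

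The main point requiring care is building the PL area-preserving Dehn twists with arbitrarily small support and verifying that Funar's argument applies to them: Kim-Koberda's original proof uses smooth area-preserving twists together with the flexibility of smooth isotopies to analyze how their high powers act on auxiliary curves. Here Gratza's density Theorem~\ref{theorem:density} provides the bridge --- any smooth area-preserving construction can be approximated $C^0$-closely by PL area-preserving maps, with control over supports --- and the ``sufficiently large $N$'' in Funar's theorem absorbs the PL perturbation. Once these technicalities are handled, the RAAG-generation statement transfers verbatim from the smooth setting.
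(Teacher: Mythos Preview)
Your proof follows essentially the same route as the paper's: Kim--Koberda tree reduction, planar realization of $T$ by PL curves, PL area-preserving Dehn twists, Funar's theorem, and suspension to higher dimensions. Your explicit verification that suspension preserves volume is a welcome addition (the paper merely asserts this in a later corollary), and note that the appeal to Gratza's theorem is not strictly needed here, since Example~\ref{example:Dehn_twist} already constructs the required PL area-preserving twists directly.
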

\begin{proof}
By the suspension trick (Lemma~\ref{lemma:suspension_of_PL}), it suffices to consider 
the case $m = 2$.  Given $\Gamma$, apply Theorem \ref{theorem:tree} to find a tree so that 
$A(T^c)$ contains $A(\Gamma)$ as a subgroup.  Since trees are planar, 
one can find PL  curves $\gamma_i$ in $I^2$ whose pattern of intersection 
realizes $T$.  Then consider area-preserving PL Dehn twists about these 
curves and apply Theorem \ref{theorem:Funar} to see that $A(T^c)$, and
hence $A(\Gamma)$, embeds in $\PL_\omega(I^2,\partial I^2)$.
\end{proof}

\begin{corollary}
For any $m\ge 2$, if $M$ is a hyperbolic 3-manifold, some finite-index subgroup of $\pi_1(M)$
embeds in $\PL(I^m,\partial I^m)$.
\end{corollary}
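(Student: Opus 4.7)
The plan is to chain together three deep results: virtual specialness of hyperbolic 3-manifold groups, the Haglund--Wise embedding of special groups into RAAGs, and the preceding Theorem~\ref{theorem:RAAG}. Since the conclusion only asks for \emph{some} finite-index subgroup, we have great flexibility at the first step.

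First, I would invoke Agol's Virtual Specialness Theorem (in the closed case) together with Wise's theorem in the cusped case to produce a finite-index subgroup $H \le \pi_1(M)$ such that $H$ is the fundamental group of a compact special cube complex. Next, I would apply the Haglund--Wise theorem, which states that the fundamental group of a compact special cube complex embeds as a subgroup of a right-angled Artin group $A(\Gamma)$ on a finite graph $\Gamma$. Thus $H$ embeds in a finitely generated RAAG.

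Finally, apply Theorem~\ref{theorem:RAAG} to embed $A(\Gamma)$ into $\PL_\omega(I^2, \partial I^2)$, and hence, via iterated suspension (Lemma~\ref{lemma:suspension_of_PL}), into $\PL(I^m, \partial I^m)$ for any $m \ge 2$. Composing the three embeddings
$$H \hookrightarrow A(\Gamma) \hookrightarrow \PL_\omega(I^2,\partial I^2) \hookrightarrow \PL(I^m,\partial I^m)$$
gives the desired finite-index subgroup of $\pi_1(M)$ inside $\PL(I^m,\partial I^m)$.

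There is really no technical obstacle in this argument beyond citing the right results: the conceptual content lives entirely in the virtual specialness theorem of Agol--Wise, and Theorem~\ref{theorem:RAAG} does the rest. The only minor point to check is finite generation of $A(\Gamma)$, which is automatic since $H$ is finitely generated (being a finite-index subgroup of $\pi_1(M)$ for $M$ a compact 3-manifold), and the Haglund--Wise embedding lands in a RAAG on a finite graph.
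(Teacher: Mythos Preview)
Your proof is correct and follows essentially the same route as the paper: the paper simply writes ``Agol's proof of Wise's conjecture implies that $\pi_1(M)$ virtually embeds in some RAAG; now apply Theorem~\ref{theorem:RAAG},'' and you have unpacked the first clause into its two constituent pieces (virtual specialness via Agol/Wise, then the Haglund--Wise embedding into a RAAG). The only redundancy is that you need not invoke suspension separately, since Theorem~\ref{theorem:RAAG} already gives the embedding into $\PL_\omega(I^m,\partial I^m)$ for all $m\ge 2$.
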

\begin{proof}
Agol's proof of Wise's conjecture \cite{Agol} implies that $\pi_1(M)$ virtually
embeds in some RAAG. Now apply Theorem~\ref{theorem:RAAG}.
\end{proof}

\subsection{Distortion}

Theorem~\ref{theorem:pillow} says that every subgroup of $\PL(I^m,\partial I^m)$ is locally
indicable. So far, for $m\ge 2$ we have exhibited no other obstruction to embedding a
locally indicable group in $\PL(I^m,\partial I^m)$. In this subsection we give some more
powerful obstructions based on the notion of {\em distortion}.

\begin{definition}[Distortion]\label{definition:distortion}
Let $G$ be a group with a finite generating set $S$, and let $|\cdot|_S$ denote word
length in $G$ with respect to $S$. An element $g \in G$ is {\em distorted}
if $$\lim_{n \to \infty} \frac {|g^n|_S} {n} = 0$$
More precisely, we say $g$ has polynomial/exponential/superexponential (etc.)
distortion if 
$$\phi_g(m):=\min \lbrace n \text{ such that } |g^n|_S \ge m \rbrace$$
grows polynomially in $m$ (with degree $>1$), exponentially, superexponentially, etc.

If $G$ is an infinitely generated group, $g \in G$ is distorted if there is a finitely
generated subgroup $H$ of $G$ containing $g$ so that $g$ is distorted in $H$. It has
polynomial/exponential/superexponential (etc.) distortion in $G$ if it has such distortion
in some $H$.
\end{definition}

Note with this definition that torsion elements are (infinitely) distorted; some
authors prefer the convention that distortion elements should have infinite order.
For an introduction to the use of distortion in dynamics, see e.g. \cite{Franks_Handel},
\cite{Calegari_Freedman} or \cite{Militon}.

\begin{remark}
We remark that other kinds of distortion can be usefully studied in dynamics; for instance,
if $G$ is a group, a finitely generated subgroup $H$ is {\em distorted} in $G$ if there is
a finitely generated subgroup $K \subset G$ containing $H$ so that the inclusion of $H$ into
$K$ is not a quasi-isometric embedding (with respect to their respective word metrics). The kind of
distortion we consider here is the restriction to the case that $H$ is $\Z$. With this
more flexible definition, many interesting examples of distortion can be obtained, even in
$\PL(I,\partial I)$. For example, Wladis \cite{Wladis} shows that Thompson's group of
dyadic rational PL homeomorphisms of the interval is exponentially distorted in the (closely related)
Stein-Thompson groups.
\end{remark}

The simplest way to show that an element $g$ is undistorted in a group $G$, 
or to bound the amount of distortion, is to find a subadditive, non-negative function on $G$ whose
restriction to powers of $g$ grows at a definite rate. We give {\em two} examples of such functions
on groups of the form $\PL(I^m,\partial I^m)$, which show that elements can be at most
exponentially distorted.

\begin{definition}[Matrix norm]\label{definition:matrix_norm}
For $M \in \GL(m,\R)$ define 
$$D(M)=\sup_{i,j} |M_{ij}|$$ where the $M_{ij}$ are
the matrix entries of $M$. If $g \in \PL_+(I^m,\partial I^m)$, let $\Sigma_i$ be simplices
of $I^m$ (of full dimension) such that the restriction of $g$ to each $\Sigma_i$ is affine,
and let $M_i(g)\in \GL(m,\R)$ be the linear part of $g$ on $\Sigma_i$. Then define
$$D(g)=\sup_i D(M_i(g))$$
\end{definition}

The next lemma relates the quantity $D$ to word length in finitely generated subgroups of
$\PL(I^m,\partial I^m)$.

\begin{lemma}\label{lemma:exponential_growth_of_D}
Let $G$ be a finitely generated subgroup of $\PL(I^m,\partial I^m)$, and let $S$ be a finite
generating set. There is an inequality
$$D(g) \le (m\cdot \max_{s\in S} D(s))^{|g|_S}$$
for any $g\in G$, where $|g|_S$ denotes word length of $g$ with respect to the given generating
set.
\end{lemma}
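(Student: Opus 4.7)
The plan is to establish a submultiplicative estimate of the form
$$D(fg) \;\le\; m \cdot D(f) \cdot D(g) \qquad \text{for all } f,g \in \PL(I^m,\partial I^m),$$
and then iterate it along a shortest expression of $g$ in the generators. Replacing $S$ by $S \cup S^{-1}$ if necessary, we may assume $S$ is symmetric, so that $|g|_S$ equals the minimal length of a word in the letters of $S$ representing $g$; this matches the convention that $\max_{s\in S} D(s)$ is taken over the symmetric set.

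To prove submultiplicativity, first produce a triangulation of $I^m$ adapted to the composition $fg$: choose triangulations $\mathcal{T}_f$ and $\mathcal{T}_g$ making $f$ and $g$ simplexwise affine, and let $\mathcal{T}$ be a common simplicial refinement of $\mathcal{T}_g$ with $g^{-1}(\mathcal{T}_f)$. On each top-dimensional simplex $\Sigma$ of $\mathcal{T}$, the map $g$ is affine with linear part $M(g|_\Sigma)$, and $g(\Sigma)$ is contained in a simplex of $\mathcal{T}_f$ on which $f$ is affine with linear part $M(f|_{g(\Sigma)})$; thus the linear part of $fg$ on $\Sigma$ is the matrix product
$$M(fg|_\Sigma) \;=\; M(f|_{g(\Sigma)}) \cdot M(g|_\Sigma).$$
The elementary matrix-entry bound $|(AB)_{ij}| \le \sum_{k=1}^m |A_{ik}||B_{kj}| \le m \cdot D(A) \cdot D(B)$ then gives $D(M(fg|_\Sigma)) \le m \cdot D(f) \cdot D(g)$, and taking the sup over $\Sigma$ yields the desired submultiplicativity.

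Now the lemma follows by induction on $n = |g|_S$. The cases $n=0$ (where $D(\Id) = 1$) and $n=1$ (where $D(s) \le m \max_{s\in S} D(s)$) are immediate. Writing $g = s_1 s_2 \cdots s_n$ as a geodesic word and applying the inequality $n-1$ times,
$$D(g) \;\le\; m^{n-1} \prod_{i=1}^n D(s_i) \;\le\; m^{n-1}\bigl(\max_{s\in S} D(s)\bigr)^n \;\le\; \bigl(m \cdot \max_{s\in S} D(s)\bigr)^n,$$
which is exactly the claimed bound. There is no real obstacle here beyond the bookkeeping of simplicial refinements; the only mildly delicate point is the existence of the common refinement $\mathcal{T}$ compatible with $g$ and $fg$ simultaneously, and this is a standard fact in PL topology (any two triangulations of a PL manifold admit a common refinement, and PL maps pull back triangulations to polyhedral subdivisions that can be further simplicially refined).
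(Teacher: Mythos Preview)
Your proof is correct and follows essentially the same approach as the paper: establish the submultiplicativity $D(fg)\le m\,D(f)\,D(g)$ via the elementary matrix bound $D(AB)\le m\,D(A)\,D(B)$ together with the observation that the linear part of $fg$ on any region of linearity is a product of a linear part of $f$ with a linear part of $g$, and then iterate along a shortest word. Your treatment is slightly more explicit about the common refinement and the induction, and you sensibly note the symmetrization of $S$, but the argument is the same.
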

\begin{proof}
If $M$ and $N$ are elements of $\GL(m,\R)$, then $D(MN)\le mD(M)D(N)$. If $g,h \in \PL(I^m,\partial I^m)$
are arbitrary, with linear parts $M_i(g)$ and $N_j(h)$ with respect to subdivisions of $I^m$ into
simplices $\Delta_i$, $\Delta'_j$ then the linear part of $gh$ is everywhere equal to some 
$M_i(g)N_j(h)$; thus $D(gh) \le mD(g)D(h)$. From this the lemma follows.
\end{proof}

Thus the logarithm of $D$ is (up to an additive
constant) a subadditive function on $\PL(I^m,\partial I^m)$.
On the other hand, it turns out that under powers, $D(g^n)$ must grow at least {\em linearly} with
$n$:

\begin{proposition}\label{proposition:linear_D}
Let $g \in \PL(I^m,\partial I^m) - \Id$. Then there is an inequality
$$D(g^n) \ge nC$$
for some positive $C$ depending on $g$.
\end{proposition}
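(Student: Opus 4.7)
The plan is to extract the desired lower bound on $D(g^n)$ from the semilinear behavior of $g$ at a well-chosen point on the frontier $\fro(g)$. By Lemma~\ref{lemma:semi_linear}, pick a generic point $p$ on a codimension 1 simplex of $\fro(g)$ at which $g$ acts semilinearly, with dividing plane $\pi$ tangent to the simplex. In adapted coordinates with $\pi = \{x_m = 0\}$ and active side $\{x_m > 0\}$, the linear part of $g$ on the active side at $p$ lies in the codimension 1 stabilizer (Example~\ref{example:codimension_1}) and so has the block form
$$L = \begin{pmatrix} \Id_{m-1} & 0 \\ v^T & \lambda \end{pmatrix}$$
with $\lambda > 0$ (by orientation preservation) and $(v,\lambda) \neq (0,1)$ (since $p \in \fro(g)$). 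By the chain rule the linear part of $g^n$ at $p$ on the active side is $L^n$, which is itself the linear part of $g^n$ on a full-dimensional simplex; in particular $D(g^n) \ge D(L^n)$. A direct block computation gives
$$L^n = \begin{pmatrix} \Id & 0 \\ s_n(\lambda)\, v^T & \lambda^n \end{pmatrix}, \qquad s_n(\lambda) := 1 + \lambda + \cdots + \lambda^{n-1}.$$

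I then split into cases on $\lambda$. If $\lambda > 1$, then $D(L^n) \ge \lambda^n \ge 1 + n(\lambda - 1)$ by Bernoulli, so the claim holds with $C = \lambda - 1$. If $\lambda = 1$, then necessarily $v \neq 0$, and $L^n$ has a bottom-left entry of absolute value $n|v|_\infty$, so the claim holds with $C = |v|_\infty$. In either subcase, the desired linear lower bound on $D(g^n)$ follows immediately.

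The main obstacle is the remaining subcase $0 < \lambda < 1$, where $L^n$ is uniformly bounded in $n$ and this particular $p$ does not witness any growth. Here my plan is to argue that $\lambda(g,p)$ cannot be strictly less than $1$ at \emph{every} generic $p \in \fro(g)$, so that some other frontier point reduces us to one of the first two subcases. The cleanest version of this is the one-dimensional statement that a nontrivial PL self-homeomorphism of an interval fixing its endpoints must have slope $> 1$ at one endpoint, proved by applying the intermediate value theorem to $g(x) - x$: slopes $< 1$ at both endpoints would force $g(x) < x$ near one end and $g(x) > x$ near the other, producing an interior fixed point that contradicts $(a,b) \subset I^m \setminus \fix(g)$. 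In higher dimensions I would bootstrap this by restricting to a $1$-dimensional arc transverse to $\fro(g)$ that runs from $\fix(g)$ into an active component $U$ and back; combined with the change-of-variable identity $\int_U \det Dg = \mathrm{vol}(U)$ on each such component, which precludes $\det Dg = \lambda$ from being $< 1$ throughout a full boundary collar, this produces the required frontier point $p'$ with $\lambda(p') \ge 1$. The essential difficulty is packaging this intermediate-value argument cleanly in higher dimensions, since the active region need not be simply connected and one must be careful with the PL structure along the transverse arc.
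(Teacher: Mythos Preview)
Your setup and the cases $\lambda > 1$ and $\lambda = 1$ are exactly the paper's argument. One cosmetic slip: a linear map fixing $\{x_m = 0\}$ pointwise is \emph{upper} triangular, $L = \left(\begin{smallmatrix} \Id & w \\ 0 & \lambda\end{smallmatrix}\right)$ (compare Example~\ref{example:codimension_1}), not lower triangular as you wrote; this does not affect the case analysis.

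The divergence is in how you handle $\lambda < 1$. You correctly note that $D(L^n)$ is bounded there, and you propose to locate another frontier point with eigenvalue $\ge 1$ via an intermediate-value/volume argument that you do not complete. The paper does something much simpler: it never separates $\lambda > 1$ from $\lambda < 1$. The key geometric observation --- that $g^n(\Delta) \cap \Delta$ has interior near $p$ --- holds for \emph{every} integer $n$, so $g^n$ has linear part $M^n$ on a nonempty open set for all $n \in \Z$, giving $D(g^n) \ge D(M^n)$. When $|\lambda| \ne 1$ one of the sequences $D(M^n)$, $D(M^{-n})$ blows up exponentially; equivalently, if $\lambda < 1$ at $p$ just replace $g$ by $g^{-1}$, which has the same frontier and eigenvalue $1/\lambda > 1$ there. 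Either way one obtains $D(g^{\pm n}) \ge |n|\,C$, and since the only application is Corollary~\ref{corollary:distorted} (where $|g^n|_S = |g^{-n}|_S$), this is all that is needed. Your frontier program would, if completed, prove the sharper statement that some frontier point already has $\lambda \ge 1$ for $g$ itself, but that is both harder and unnecessary here.
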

\begin{proof}
Let $\sigma$ be a simplex of codimension 1 in the frontier of $\fix(g)$. After subdividing $\sigma$
if necessary, we can find a simplex $\Delta$
on which $g$ is affine and nontrivial, in such a way that $\sigma$ is a face of $\Delta$. Then
for any point $p$ in the interior of $\sigma$, and for any $n$, the interior of $g^n(\Delta)\cap \Delta$ 
contains points arbitrarily close to $p$. If $M$ is the linear part of $g$ on $\Delta$, then
$g^n$ has linear part $M^n$ on some nonempty open subset. Since $M$ fixes a codimension 1 subset,
either $M$ has an eigenvalue of absolute value $\ne 1$ (in which case $D(g^n)$ grows exponentially)
or $M$ is a shear, in which case $D(g^n)$ grows linearly.
\end{proof}

From Lemma~\ref{lemma:exponential_growth_of_D} and Proposition~\ref{proposition:linear_D}
we immediately obtain the following corollary:

\begin{corollary}\label{corollary:distorted}
Every element of $\PL(I^m,\partial I^m)-\Id$ is at most exponentially distorted.
\end{corollary}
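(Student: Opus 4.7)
The plan is to chain together the two preceding results. Fix $g \in \PL(I^m,\partial I^m) - \Id$. To bound the distortion, I would work inside any finitely generated subgroup $G$ that contains $g$ and exhibit the required exponential bound there; since the definition of distortion in Definition~\ref{definition:distortion} quantifies over some such $H$, it is enough to bound the distortion in one particular choice, and the same bound will then hold in any larger finitely generated subgroup containing $g$.

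So I would pick a finite generating set $S$ for $G$ and set $K := m\cdot \max_{s \in S} D(s)$. By Lemma~\ref{lemma:exponential_growth_of_D} applied to $g^n$, we get
\[
D(g^n) \le K^{|g^n|_S}.
\]
By Proposition~\ref{proposition:linear_D}, there is a constant $C > 0$ (depending only on $g$) with $D(g^n) \ge nC$. Combining these:
\[
nC \le K^{|g^n|_S},
\]
which after taking logarithms yields a logarithmic lower bound on word length:
\[
|g^n|_S \ge \frac{\log(nC)}{\log K}.
\]

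Finally, I would unpack the distortion function $\phi_g(N) = \min\{n : |g^n|_S \ge N\}$. From the inequality above, $|g^n|_S \ge N$ as soon as $n \ge K^N/C$, so $\phi_g(N) \le K^N/C$, which is exponential in $N$. Hence $g$ is at most exponentially distorted, as claimed.

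There is essentially no obstacle here: both substantive inputs (the exponential upper bound for $D$ under word length, and the linear lower bound for $D$ under powers) have already been established. The only thing that needs a moment of care is matching the asymmetric roles of the two bounds (one controls $D(g)$ by $|g|_S$, the other controls $D(g^n)$ from below in $n$) against the definition of $\phi_g$, which is itself an asymmetric min.
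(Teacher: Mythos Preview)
Your proof is correct and matches the paper's approach exactly: the paper simply states that the corollary follows immediately from Lemma~\ref{lemma:exponential_growth_of_D} and Proposition~\ref{proposition:linear_D}, and you have filled in the routine details of chaining $nC \le D(g^n) \le K^{|g^n|_S}$ and reading off the bound on $\phi_g$.

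One caution on your first paragraph: the remark that bounding the distortion ``in one particular choice'' of $H$ suffices because ``the same bound will then hold in any larger finitely generated subgroup'' is false in general --- distortion can strictly increase when passing to a larger ambient group (e.g.\ $a$ is undistorted in $\langle a\rangle\cong\Z$ but exponentially distorted in $BS(1,2)$). Fortunately your argument does not actually use this: you begin with an \emph{arbitrary} finitely generated $G$ containing $g$ and derive the exponential bound there directly, which is exactly what the definition requires.
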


We can use this corollary to give many examples of locally indicable groups which do
not admit an injective homomorphism to any $\PL(I^m,\partial I^m)$.

\begin{example}
Let $G$ be the iterated HNN extension given by the presentation
$$G: = \langle a,b,c \; | \; bab^{-1} = a^2, cbc^{-1} = b^2 \rangle$$
Evidently, $b$ is exponentially distorted, and $a$ is super-exponentially distorted.
On the other hand, $G$ is locally indicable. To see this, first observe that there
is a surjection from $G$ onto the Baumslag-Solitar group 
$BS(1,2):=\langle b,c\; | \; cbc^{-1} = b^2\rangle$ sending $b \to b$, $c \to c$
and $a\to 1$. The group $BS(1,2)$ is
locally indicable, and the kernel is a free product of copies of
the locally indicable group $\Z[\frac 1 2]$, and therefore $G$ is
locally indicable as claimed.
\end{example}

It is interesting that the combinatorial structure of a PL map naturally gives rise to another
subadditive function on $\PL(I^m,\partial I^m)$ which can also be used to control distortion.

\begin{definition}[Triangle number]\label{definition:triangle_number}
Let $g \in \PL(I^m,\partial I^m)$. The {\em triangle number} of $g$, denoted $\Delta(g)$,
is the least number of $\PL$ simplices into which $I^m$ can be subdivided so that the
restriction of $g$ to each simplex is linear.
\end{definition}

Note that $\Delta(g^{-1}) = \Delta(g)$. For, if $\tau_g$ is a triangulation such that
$g$ is linear on each simplex, then the image $g(\tau_g)$ is a triangulation with the
same number of simplices, such that $g^{-1}$ is linear on each simplex.

The next lemma relates triangle number to word length in finitely generated subgroups of
$\PL(I^m,\partial I^m)$.

\begin{lemma}\label{lemma:exponentially_many_triangles}
Let $G$ be a finitely generated subgroup of $\PL(I^m,\partial I^m)$, and let $S$
be a finite generating set. There is a constant $C$ (depending only on the dimension $m$)
and an inequality
$$\Delta(g) \le (C\cdot \max_{s\in S} \Delta(s))^{|g|_S}$$
\end{lemma}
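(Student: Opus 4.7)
The plan is to reduce the statement to a submultiplicativity bound
$$\Delta(gh) \le C_m\, \Delta(g)\Delta(h)$$
for a constant $C_m$ depending only on the dimension $m$, and then iterate. Given such a bound, if $g=s_1 s_2 \cdots s_k$ is a shortest expression with $k=|g|_S$, a straightforward induction gives
$$\Delta(g) \le C_m^{\,k-1}\prod_{i=1}^k \Delta(s_i) \le (C_m\cdot \max_{s\in S}\Delta(s))^{|g|_S},$$
which is the claim (after enlarging $C_m$ by a factor, and using the identity $\Delta(s^{-1})=\Delta(s)$ noted above to justify including inverses in $S$ without affecting the right-hand side).

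So the heart of the proof is the submultiplicativity inequality. Let $\tau_h$ be a triangulation of $I^m$ with $\Delta(h)$ top-dimensional simplices on which $h$ is linear, and let $\tau_g$ be one with $\Delta(g)$ simplices on which $g$ is linear. Then $gh$ is linear on any convex region contained in a simplex $\sigma_h$ of $\tau_h$ whose image $h(\sigma_h)$ lies inside a single simplex $\sigma_g$ of $\tau_g$. Thus $gh$ is linear on each component of the overlay $\tau_h \wedge h^{-1}(\tau_g)$, triangulated in any way. Since $h^{-1}(\tau_g)$ has exactly $\Delta(g)$ top-dimensional cells (as $h$ is a homeomorphism), it suffices to bound the number of $m$-simplices needed to triangulate the overlay of two PL triangulations of $I^m$ with $a$ and $b$ top-dimensional simplices respectively by $C_m\, ab$.

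The key combinatorial fact is then: the intersection of any two $m$-simplices $\sigma_1,\sigma_2 \subset \R^m$ can be triangulated using at most a constant $K_m$ many $m$-simplices, where $K_m$ depends only on $m$. Indeed, $\sigma_1\cap\sigma_2$ is a convex polytope cut out by the $2(m+1)$ halfspaces coming from the facets of the two simplices, so by the upper bound theorem it has at most $f(m)$ vertices, and a convex polytope in $\R^m$ with $f(m)$ vertices admits a triangulation with at most $K_m$ top-dimensional simplices (for instance, by coning from a vertex and recursing on facets). Summing over the at most $ab$ nonempty pairwise intersections of simplices, one assembles these local triangulations into a triangulation of $I^m$ refining both $\tau_h$ and $h^{-1}(\tau_g)$ with at most $K_m\, ab$ top-dimensional simplices; this can be done consistently across shared faces by a standard inductive refinement procedure on the skeleta.

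The main obstacle is exactly this geometric bookkeeping: the bound on the intersection polytope's complexity is clean, but producing a globally compatible triangulation of the overlay (so that the local triangulations agree on common faces) requires a small amount of care — one typically triangulates the cells of the overlay in order of increasing dimension, extending triangulations of boundaries to the interiors using bounded-complexity cones. Once this combinatorial submultiplicativity is in hand, the lemma follows by induction on $|g|_S$ as indicated above.
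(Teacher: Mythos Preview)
Your argument is correct and essentially identical to the paper's: both reduce to the submultiplicativity $\Delta(gh)\le C_m\,\Delta(g)\Delta(h)$ by overlaying a triangulation linear for $h$ with the $h$-pullback of one linear for $g$, and both bound each overlay cell by noting it is a convex polytope cut out by at most $2(m+1)$ halfspaces, hence of bounded combinatorial type. The paper does not address the face-compatibility issue you raise, implicitly taking ``subdivision into simplices'' in the loose (not necessarily face-to-face) sense, so your extra care there is not actually needed.
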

\begin{proof}
For each generator $s \in S$, let's let $\tau_s$ be a triangulation of $I^m$ by $\Delta(s)$
simplices such that the restriction of $s^{-1}$ to each simplex is linear. Now let $\sigma$
be any $\PL$ simplex in $I^m$. For each simplex $\tau$ of $\tau_s$ the intersection
$\tau \cap \sigma$ is convex and cut out by at most $2m+2$ hyperplanes (since each simplex
individually is cut out by $m+1$ hyperplanes). Thus it is one of finitely many combinatorial
types (depending only on the dimension $m$) and there is a constant $C$ such that this
intersection may be subdivided into at most $C$ linear simplices.

It follows that if $w \in G$ is arbitrary, and 
$\tau_w$ is a triangulation of $I^m$ by $\Delta(w)$ simplices such that the restriction of
$w$ to each simplex is linear, then by subdividing the intersections of simplices in
$\tau_s$ and $\tau_w$, we obtain a triangulation of $I^m$, which by construction can
be pulled back by $s^{-1}$ to a triangulation with at most
$C\Delta(s)\Delta(w)$ simplices, and
such that the product $ws$ is linear on each simplex.
The lemma follows by induction.
\end{proof}

The case of dimension 1 is much simpler. Triangle number counts simply the number of breakpoints
(plus 1), and can grow at most linearly:

\begin{lemma}\label{lemma:linearly_many_breakpoints}
Let $G$ be a finitely generated subgroup of $\PL(I,\partial I)$, and let $S$ be a finite
generating set. There is an inequality
$$\Delta(g)-1 \le \max_{s\in S} (\Delta(s)-1)\cdot |g|_S$$
\end{lemma}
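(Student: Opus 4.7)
The plan is to observe that in dimension one, the function $\Delta - 1$ is literally the number of breakpoints of a PL homeomorphism of $I$ fixing $\partial I$, and then to establish the stronger statement that this quantity is \emph{subadditive} under composition, from which the lemma follows by an immediate induction on word length.

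First I would unpack the definition: a PL homeomorphism $g \in \PL(I,\partial I)$ is piecewise affine with respect to a unique minimal partition of $I$ into closed subintervals on whose interiors $g$ is affine, and $\Delta(g)$ equals the number of such subintervals, i.e.\ one more than the number of interior breakpoints of $g$. So the inequality to prove is equivalent to the statement that the number of breakpoints of a product is controlled by the sum of the numbers of breakpoints of the factors.

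The key step is the subadditivity claim: for any $g, h \in \PL(I,\partial I)$,
\[
\Delta(gh) - 1 \;\le\; (\Delta(g) - 1) + (\Delta(h) - 1).
\]
For this, note that the composition $gh$ is affine on the interior of any open interval $J$ such that $h$ is affine on $J$ and $g$ is affine on $h(J)$. Hence every breakpoint of $gh$ lies either in the set of breakpoints of $h$, or in the preimage under $h$ of the set of breakpoints of $g$. Since $h$ is a homeomorphism of $I$, the latter set has the same cardinality as the set of breakpoints of $g$, and the subadditivity follows.

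Given subadditivity, the lemma follows by induction on $|g|_S$: writing $g = s_1 s_2 \cdots s_n$ with $n = |g|_S$ and each $s_i \in S \cup S^{-1}$, iterated application of the subadditivity bound (together with the obvious identity $\Delta(s^{-1}) = \Delta(s)$, so that generators and their inverses contribute equally) yields
\[
\Delta(g) - 1 \;\le\; \sum_{i=1}^n (\Delta(s_i) - 1) \;\le\; \max_{s \in S}(\Delta(s) - 1) \cdot |g|_S.
\]
There is no real obstacle here; the main point is simply to recognize that the one-dimensional setting collapses the convex-subdivision combinatorics of Lemma~\ref{lemma:exponentially_many_triangles} (where the constant $C$ arises from the many combinatorial types of intersections of simplices) to the trivial observation that the intersection of two intervals is an interval, so no multiplicative blow-up occurs.
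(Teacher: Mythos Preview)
Your proof is correct and follows essentially the same approach as the paper: the paper's one-line argument observes that if $w$ has breakpoints $p_i$ and $s$ has breakpoints $q_j$, then every breakpoint of $ws$ is either some $q_j$ or some $s^{-1}(p_i)$, which is exactly your subadditivity step. You have simply made the induction on word length and the use of $\Delta(s^{-1}) = \Delta(s)$ explicit.
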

\begin{proof}
Let $w \in G$ have breakpoints at $p_i$ and let $s$ have breakpoints at $q_j$. Then
each breakpoint of $ws$ is of the form $q_j$ or $s^{-1}(p_i)$.
\end{proof}

On the other hand, at least in dimension 2,
triangle number must grow {\em at least} linearly under powers:

\begin{proposition}\label{proposition:square_linear_triangle}
Let $g \in \PL(I^2,\partial I^2)-\Id$. Then there is an inequality
$$\Delta(g^n) \ge n$$
\end{proposition}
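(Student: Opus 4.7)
The plan is to find a vertex in the interior of the frontier $\fro(g)$ from which $n$ pairwise distinct bend arcs of $g^n$ emanate into a single triangle of a linearizing triangulation for $g$; any triangulation of $I^2$ on which $g^n$ is simplexwise affine must contain these $n$ arcs in its $1$-skeleton, forcing at least $n$ simplices to meet the vertex on the moving side.

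First, I apply Lemma~\ref{lemma:semi_linear} to $\langle g\rangle$ to pick a $1$-simplex $\sigma\subset\fro(g)$ with an interior point $p\in\sigma$ at which $g$ acts semilinearly, and take $\Delta$ to be a simplex of a minimal triangulation for $g$ with $\sigma$ as the edge along the fixed side, so that $g|_\Delta=M$ is a nontrivial affine map fixing $\sigma$ pointwise. In coordinates with $\sigma$ on the $x$-axis and $\Delta\subset\{y\ge 0\}$, we have $M(x,y)=(x+ay,\lambda y)$ with $\lambda>0$ (by orientation-preservation, via the Alexander trick) and $(\lambda,a)\ne(1,0)$; a direct computation gives $M^l(x,y)=(x+ay(1+\lambda+\cdots+\lambda^{l-1}),\lambda^l y)$, so $M^l=\Id$ would force $\lambda=1$ and $a=0$, meaning $M$ has infinite order. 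Label the endpoints of the $\sigma$-edge of $\Delta$ by $A,B$ and the opposite vertex by $v$. Since $\Delta(h)=\Delta(h^{-1})$ for every $h$ (invert any triangulation linearizing $h$), I may replace $g$ by $g^{-1}$ if needed and assume $\lambda\ge 1$.

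Next, I pick $\tau$ to be whichever of $\overline{Av},\overline{Bv}$ makes the arcs $\alpha_l := M^{-l}(\tau)\cap\Delta$ for $l=0,1,\ldots,n-1$ nondegenerate (in the expanding case $\lambda>1$ either works, since $M^{-1}$ contracts $\Delta$ strictly into itself; in the shear case $\lambda=1$ the correct edge is forced by the sign of $a$, and a simple geometric check confirms the other endpoints $M^{-l}(v)$ give $n$ distinct directions from $C\in\{A,B\}$). Each $\alpha_l$ has $C$ as one endpoint (since $C$ is fixed by $M$) and lies in the non-smooth locus of $g^n$: the edge $\tau$ is a bend locus of $g$ (in a minimal triangulation, adjacent simplices must carry distinct affine parts, else they would amalgamate), and a chain rule argument at a generic point of $\alpha_l$ shows that the jump of the factor $Dg(g^l(x))$ across $\alpha_l$ is not cancelled by the remaining continuous chain-rule factors.

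Finally, the $1$-skeleton of any triangulation $\mathcal{T}$ on which $g^n$ is simplexwise affine must contain the non-smooth locus of $g^n$, hence contains the $n$ arcs $\alpha_l$ at $C$; these subdivide the sector of $\Delta$ at $C$ into at least $n$ angular sub-sectors, each of which must contain a full simplex of $\mathcal{T}$, giving $\Delta(g^n)\ge n$. The hardest part will be verifying that the $\alpha_l$ are nondegenerate and have pairwise distinct tangent directions at $C$; this requires a case analysis on the sign of $a$ (in the shear sub-case) and the observation that the two candidate endpoints $A,B$ cannot simultaneously fail, together with the chain rule argument confirming that the $\alpha_l$ lie in the genuine non-smooth locus of $g^n$ rather than being ``removable'' singularities of the combinatorial structure.
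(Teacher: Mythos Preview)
Your approach shares the paper's basic idea---locate a vertex on $\fro(g)$ and count bend directions of $g^n$ there---but two of your steps are genuine gaps, not just details to be filled in.

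First, the ``amalgamation'' argument that $\tau=\overline{Cv}$ must be a bend edge of $g$ is simply false. Two adjacent triangles carrying the same affine piece form a quadrilateral, which still needs two triangles; a minimal triangulation can perfectly well have interior edges across which $g$ is locally linear. So the maximal region on which $g$ agrees with $M$ may be much larger than $\Delta$, and there is no reason its genuine bend boundary should pass through $A$, $B$, or $v$. (Incidentally, the claim that ``$M^{-1}$ contracts $\Delta$ strictly into itself'' when $\lambda>1$ is also false as stated: $M^{-1}$ contracts the $y$-coordinate but can shear the $x$-coordinate arbitrarily far, so $M^{-l}(v)$ need not lie in $\Delta$.)

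Second, and more seriously, the non-cancellation claim is unjustified. For $i<l$ you are right that $Dg(g^i(x))=M$ is constant near $\alpha_l$, but for $i>l$ the iterate $g^i(x)$ has crossed $\tau$ into a region where you have no control over $g$ whatsoever. Nothing prevents $g^{l+1}(\alpha_l),\,g^{l+2}(\alpha_l),\dots$ from lying on \emph{other} bend edges of $g$, producing further jumps in the chain rule that cancel the one at step $l$. You flag this as ``the hardest part'' at the end, but the setup you have chosen gives you no leverage on it.

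The paper's proof resolves both problems at once by passing to the piecewise-projective action of $g$ on the circle of tangent directions at a vertex of $\fro(g)$, and then using a winding-number argument along the frontier curve $\gamma$ to locate a specific vertex $v$ and an invariant subinterval $J$ of this tangent circle on which the projective action of $g$ is topologically conjugate to a translation \emph{and} has a breakpoint in its interior. Taking $p$ to be the \emph{uppermost} such breakpoint then guarantees that $g(p),\dots,g^{n-1}(p)$ all lie in the half-interval $(p,J^+)$ on which the projective action of $g$ is smooth---this is exactly the non-cancellation statement you need, obtained for free once $J$ is found. The existence of such a $J$ is the substantive content of the proof and genuinely requires the global winding argument; it cannot be read off from a locally chosen edge of a triangulation.
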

\begin{proof}
Consider the action of $g$ at a vertex in the frontier of $\fix(g)$. The projectivization
of this action is a piecewise projective action on the interval. We claim that there is
a vertex $v \in \fro(g)$ and a subinterval $J$ of the projective tangent bundle at $v$
which is fixed by $g$, where $g$ acts (topologically) conjugate to a translation, and
such that $g$ has a break point in the interior of $J$. Assuming this claim, the proof
proceeds as follows. Suppose (for the sake of argument) that $g$ moves
points on this interval in the positive direction, and suppose furthermore that there is
at least one breakpoint in this interval. Let $p$ be the uppermost
breakpoint. Denote by $[p,J^+]$ the positive subinterval of $J$ bounded (on one side)
by $p$. Then the points $g^i(p)$ for $0 < i < n$ are all distinct and contained in 
the open interval $(p,J^+)$ where the action of $g$ is smooth; thus by induction, $g^n$
is singular on $J$ at these points, so $\Delta(g^n)\ge n$. 

We now prove the claim. Let $\gamma$ be a component of the frontier of $\fix(g)$, so
that $\gamma$ is an immersed polyhedral circle. We further subdivide $\gamma$ if necessary, adding
vertices at points where $g$ is not locally linear. 
Denote the edges of $\gamma$ in cyclic order by $e_i$ (indices taken cyclically), and orient
these edges so they point in the positive direction around $\gamma$ (with respect to the
orientation it inherits as a boundary component of the region $P$ that $g$ does not fix pointwise).
Let $v_i$ be the initial vertex of $e_i$. Subdivide $P$ near $v_i$ into polyhedra on which
the element $g$ acts linearly, and let $e_i^+$ and $e_i^-$ be the extremal edges of this
subdivision which point into the interior of $P$, where the choice of $\pm$ is such 
that the tangents to the edges 
$e_i, e_i^-, e_i^+$ at $v_i$ are positively circularly ordered in the unit tangent circle at 
$v_i$. We think of $\gamma$ now as a map $\gamma:S^1 \to I^2$ and pull back the unit tangent
bundle over the image to a circle bundle $U$ over $\gamma$. Let $V\subset U$ be the subset
of tangents pointing locally into the interior of $P$; thus $V$ is an (open) interval bundle
over $S^1$. The argument function lifts locally (in $\gamma$) to a real valued function on
each fiber, in such a way that holonomy around $\gamma$ in the positive direction
increases the argument by $2\pi$
(this is just a restatement of the fact that the winding number of an embedded loop is 1
around the region it bounds). Since
the (real-valued) argument of $e_i^-$ in each fiber is at most equal to the argument of
$e_i^+$, it follows that there must be some index $i$ for which the argument of $e_{i+1}^+$
is {\em strictly} bigger than that of $e_i^-$, as measured in the bundle $V$ 
restricted to the contractible
subset $e_i \subset \gamma$ (where the argument is globally well-defined). 
If $P_i$ is the region of $P$ on which $g$ is linear which
is bounded on three (consecutive) sides by $e_i^-$, $e_i$, $e_{i+1}^+$, then  
$e_i^-$ and $e_{i+1}^+$ can be extended to bound a triangle $T$ in such a way that $T \cap P_i$ contains a 
neighborhood of $e_i$ in $I^2 - \fix(g)$. See Figure~\ref{frontier_triangle}.

\begin{figure}[htpb]
\labellist
\small\hair 2pt
\pinlabel $e_i$ at 130 176
\pinlabel $e_i^-$ at 170 170
\pinlabel $e_{i+1}^+$ at 95 163
\pinlabel $\gamma$ at 83 88
\pinlabel $T$ at 135 140
\endlabellist
\centering
\includegraphics[scale=0.75]{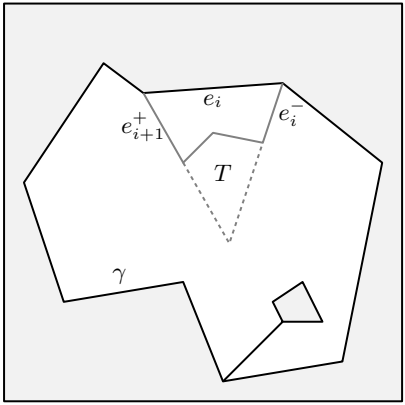}
\caption{The (dashed) triangle $T$ associated to the region $P_i$}\label{frontier_triangle}
\end{figure}

A linear map of the plane
which fixes the edges of a triangle is trivial; but since the interior of $P_i$ is not
in $\fix(g)$, it follows that $g$ must move at least one of the edges 
$e_i^-$ and $e_{i+1}^+$ off itself;
the projective action of $g$ at the corresponding vertex must therefore move some 
breakpoint off itself, proving the claim, and therefore the proposition.
\end{proof}

The argument of Proposition~\ref{proposition:square_linear_triangle} actually shows $\Delta(g^n)\ge n$
for any $g \in \PL(I,\partial I)$, since if $p$ is an uppermost breakpoint of $g$, the points
$g^i(p)$ are all breakpoints of $g^n$ for $0<i<n$. Thus by Lemma~\ref{lemma:linearly_many_breakpoints}
we conclude that every nontrivial element is undistorted in $\PL(I,\partial I)$.

\begin{question}
Is there a nontrivial distorted element in $\PL(I^2,\partial I^2)$? How about 
$\PL(I^m,\partial I^m)$ for any $m\ge 2$?
\end{question}

\begin{question}
Does the argument of Proposition~\ref{proposition:square_linear_triangle} generalize to
dimensions $n\ge 3$?
\end{question}

\section{\texorpdfstring{$C^1$}{C1} group actions}\label{section:smooth}

In this section, for the sake of completeness and to demonstrate the similarity and difference
of methods, we prove the analog of Theorem~\ref{theorem:pillow}
for groups acting smoothly on manifolds, fixing a codimension 1 submanifold pointwise.

\subsection{Definitions}

\begin{notation}
Let $M$ be a smooth manifold (possibly with smooth boundary) and let $K$ be
a closed smooth submanifold of $M$. We denote by $\Diff^1(M,K)$ the group of
$C^1$ self-diffeomorphisms of $M$ which are fixed pointwise on $K$. If $M$ is
orientable, we denote by $\Diff^1_+(M,K)$ the subgroup of orientation-preserving
$C^1$ self-diffeomorphisms.
\end{notation}

The main example of interest is the smooth closed unit ball $D^n$ in $\R^n$,
and its boundary $\partial D^n = S^{n-1}$.

One of the most important theorems about groups of $C^1$ diffeomorphisms is the
{\em Thurston Stability Theorem}:

\begin{theorem}[Thurston Stability Theorem, \cite{Thurston_stability}]\label{theorem:Thurston_stability_theorem}
Let $G$ be a group of germs of $C^1$ diffeomorphisms of $\R^n$ fixing $0$.
Let $\rho:G \to \GL(n,\R)$ be the {\em linear part of $G$ at $0$}; i.e.\/ the
representation sending $g \to dg|_0$, and let $K$ be the kernel of $\rho$. Then
$K$ is locally indicable.
\end{theorem}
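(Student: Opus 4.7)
My plan is to apply the Burns--Hale criterion (Theorem~\ref{theorem:Burns_Hale}): it suffices to show that every nontrivial finitely generated subgroup $H$ of $K$ surjects onto $\Z$. Fix such an $H$ with a symmetric generating set $h_1,\ldots,h_k$; by hypothesis each $d h_j|_0 = \Id$. For each $h \in H$ define the displacement germ $\phi_h(x) := h(x) - x$. Because $h$ is $C^1$ with $d h|_0 = \Id$, the germ $\phi_h$ is $C^1$ with $\phi_h(0) = 0$ and $d\phi_h|_0 = 0$, so $\phi_h(x) = o(|x|)$ and (by continuity of the differential) for every $\epsilon > 0$ there is a neighborhood of $0$ on which $\phi_h$ is $\epsilon$-Lipschitz.

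The central idea is to extract a nontrivial homomorphism $L : H \to \R^n$ by renormalizing these displacements near $0$. Since $H$ is nontrivial as a group of germs, some $h_j$ has $\phi_{h_j}$ not vanishing identically near $0$, so there is a sequence $x_i \to 0$ with $M_i := \max_j |\phi_{h_j}(x_i)| > 0$. After passing to a subsequence we may assume $\phi_{h_j}(x_i)/M_i \to v_j$ for each generator, with $\max_j |v_j| = 1$. The key identity is
$$\phi_{gh}(x) \;=\; \phi_g(h(x)) + \phi_h(x) \;=\; \phi_g(x) + \phi_h(x) + \bigl[\phi_g(h(x)) - \phi_g(x)\bigr],$$
and by the $\epsilon$-Lipschitz estimate for $\phi_g$ the bracketed error is bounded by $\epsilon_i |\phi_h(x_i)|$ with $\epsilon_i \to 0$. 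Assuming inductively on word length that $|\phi_h(x_i)|/M_i$ is bounded along our subsequence and that $\phi_h(x_i)/M_i$ converges to some $L(h)$, it follows that $\phi_{gh}(x_i)/M_i \to L(g) + L(h)$. Since $\phi_w$ depends only on the group element $w$, this defines a homomorphism $L : H \to \R^n$ which is nontrivial because some $L(h_j) = v_j$ is a unit vector.

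Composing $L$ with a suitable coordinate projection produces a nontrivial homomorphism $H \to \R$; its image is finitely generated and hence free abelian of finite rank, which projects onto $\Z$, yielding the desired surjection. The main obstacle is the uniform $C^1$ control of the error term $\phi_g(h(x_i)) - \phi_g(x_i)$: one needs $d\phi_g \to 0$ at the origin (this is where $C^1$ regularity is essential, and in fact the statement fails for merely $C^0$ actions) and one needs to control, along a single subsequence chosen independently of word length, the rescaled displacements of arbitrarily long words in the generators. The latter requires a careful diagonal/inductive argument exploiting the fact that the inductive bound on $|\phi_w(x_i)|/M_i$ depends only on word length, not on the specific sequence.
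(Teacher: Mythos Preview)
The paper does not give its own proof of this theorem: it is quoted as a background result and attributed to Thurston~\cite{Thurston_stability}, with no argument supplied. So there is nothing in the paper to compare your proposal against.

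That said, your outline is essentially Thurston's original renormalization (``blow-up'') argument and is correct. The only places that deserve a word of caution are already flagged in your last paragraph: once the subsequence is chosen so that $\phi_{h_j}(x_i)/M_i$ converges for each of the finitely many generators, no further diagonalization is needed --- the convergence for arbitrary words is \emph{forced} by the identity $\phi_{gh}(x)=\phi_g(x)+\phi_h(x)+[\phi_g(h(x))-\phi_g(x)]$ and the inductive bound, so $L$ is well defined on all of $H$ (and depends only on the group element, not on the word, since $\phi_w$ does). The estimate on the error term uses only that $d\phi_g|_0=0$ for the finitely many generators $g$, together with the inductively established fact that $h(x_i)\to 0$; this is exactly where $C^1$ regularity enters. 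Your concluding step (project to a coordinate, observe the image is a nontrivial finitely generated subgroup of $\R$, hence surjects onto $\Z$) is standard.
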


\subsection{Left orderability}\label{C1_LO_subsection}

The purpose of this section is to prove the following {\em $C^1$ LO Theorem}:

\begin{theorem}[$C^1$ LO]\label{theorem:cilo}
Let $M$ be an $n$ dimensional connected smooth manifold, and let $K$ be a
nonempty $n-1$ dimensional closed submanifold. Then the group $\Diff_+^1(M,K)$ is
left orderable; in fact it is locally indicable.
\end{theorem}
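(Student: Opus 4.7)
The plan is to follow the Burns--Hale strategy used in the proof of Theorem~\ref{theorem:pillow}, substituting the Thurston Stability Theorem~\ref{theorem:Thurston_stability_theorem} for the semilinearity argument that was available in the PL category. Let $G$ be a nontrivial finitely generated subgroup of $\Diff_+^1(M,K)$; by Theorem~\ref{theorem:Burns_Hale} it suffices to produce a surjection $G \to \Z$. Since $G \neq 1$ and $M$ is connected, the closed set $\fix(G) \supset K$ is a proper subset of $M$, so $\fro(G)$ is nonempty. The goal is to locate a point $p \in \fro(G)$ at which the linear-part representation $\rho_p \colon G \to \GL_+(n,\R)$, $g \mapsto dg|_p$, either has image in an explicit locally indicable subgroup of $\GL_+(n,\R)$ or is itself trivial and so susceptible to Thurston Stability.

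To choose $p$, I split into two cases. If $K \cap \fro(G) \neq \emptyset$, take $p \in K \cap \fro(G)$: because $G$ fixes $K$ pointwise, $dg|_p$ restricts to the identity on $T_pK$, so $\rho_p(G)$ lies in the pointwise stabilizer of the hyperplane $T_pK$, which by Example~\ref{example:codimension_1} is the locally indicable group $\R^{n-1}\rtimes\R^+$. If instead $K \cap \fro(G) = \emptyset$, then $K \subset U := \textnormal{int}(\fix(G))$, and $U$ is a nonempty proper open subset of the connected manifold $M$, so $\partial U$ is nonempty; since $\overline{U}\subset\fix(G)$ and $\partial U \cap U = \emptyset$, we have $\partial U \subset \fro(G)$, and I take $p \in \partial U$. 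At such a $p$, every $g \in G$ is the identity on all of $U$, so by $C^1$ continuity of the derivative, $\rho_p(g) = \lim_{q \to p,\, q \in U} dg|_q = \Id$, and $\rho_p(G)$ is trivial.

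To finish, I dichotomize on $\rho_p(G)$. If $\rho_p(G)$ is nontrivial (which can only happen in the first case), then $G$ admits a nontrivial homomorphism into the locally indicable group $\R^{n-1}\rtimes\R^+$, so its finitely generated image surjects onto $\Z$ and so does $G$. If $\rho_p(G)$ is trivial, I pass to the germ group $G^p$ of $G$ at $p$: this is a finitely generated quotient of $G$ whose linear part at $p$ is trivial, and it is nontrivial because $p \in \fro(G)$ forces some generator of $G$ to move points in every neighborhood of $p$ (pigeonhole over the finite generating set) and hence to have nontrivial germ at $p$. Theorem~\ref{theorem:Thurston_stability_theorem} then says $G^p$ is locally indicable, so being finitely generated and nontrivial it surjects onto $\Z$, and the surjection pulls back to $G$. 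The main obstacle I expect is the second branch in the choice of $p$: the payoff of working at a point of $\partial\,\textnormal{int}(\fix(G))$ is exactly that $C^1$ continuity of the derivative converts the triviality of $G$ on $U$ into the vanishing of $\rho_p$, which is what allows Thurston Stability to stand in for the explicit semilinear/affine model used in the PL proof.
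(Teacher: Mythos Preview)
Your proof is correct, but it takes a somewhat different route from the paper's. The paper does not split on whether $K$ meets $\fro(G)$; instead it introduces the \emph{fixed rank} of $G$ at a point (Definition~\ref{definition:fixed_rank}) and lets $X$ be the set where this rank is $\ge n-1$. Upper semicontinuity of fixed rank (Lemma~\ref{lemma:fixed_rank_semicontinuous}) makes $X$ closed, $K\subset X$ makes it nonempty, and connectedness of $M$ gives a point $p\in\fro(X)$. At any such $p$ there is automatically an $(n-1)$--plane in $T_pM$ fixed by every $dg|_p$, so $\rho_p$ lands in $\R^{n-1}\rtimes\R^+$ without any case analysis; if $\rho_p$ is trivial one checks $p\in\fro(\fix(G))$ and invokes Thurston stability.

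Your argument trades the semicontinuity lemma for a dichotomy: either $K$ itself supplies the fixed hyperplane $T_pK$, or $K$ is buried in $\textnormal{int}(\fix(G))$ and you work at a boundary point of that interior, where $C^1$ continuity of the derivative forces $\rho_p=\Id$ outright (so no hyperplane is needed). This is slightly more elementary --- you never define or use fixed rank --- but it is also less reusable: the paper's fixed-rank framework is called on again in \S\ref{section:circular} for the codimension--2 result (Theorem~\ref{theorem:ci_ropes_co}). One small remark: your justification that $\partial U\subset\fro(G)$ is a bit terse; the point is that $p\notin U=\textnormal{int}(\fix(G))$ means every neighborhood of $p$ meets $M\setminus\fix(G)$, which together with $p\in\overline U\subset\fix(G)$ gives $p\in\fro(G)$.
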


An important special case is $M=D^n$ and $K=\partial D^n = S^{n-1}$.

\begin{definition}[Fixed rank]\label{definition:fixed_rank}
Let $f\in \Diff^1(M,K)$. The {\em fixed rank} of $f$ at a
point $p$ is defined to be $-1$ if $f$ does not fix $p$, and otherwise to be
the dimension of the subspace of $T_p$ fixed by $df|_p$. Similarly, if $G$
is a subgroup of $\Diff^1(M,K)$, the
{\em fixed rank} of $G$ at a point $p$ is $-1$ if $p$ is not in $\fix(G)$, and
otherwise is the dimension of the subspace of $T_p$ fixed by 
$\rho_p:G \to \GL(n,\R)$ sending $g \to dg|_p$ for all $g\in G$.
\end{definition}

Recall that a (real-valued) function $f$ on a topological space is 
{\em upper semi-continuous} if for every point $p$ and every $\epsilon>0$
there is a neighborhood $U$ of $p$ so that $f(x)\le f(p)+\epsilon$ for
all $x\in U$. In other words, the value of $f$ can ``jump up'' at a limit,
but not down. If $f$ is upper semi-continuous, then for any $t$ the
set where $f \ge t$ is closed.

\begin{lemma}\label{lemma:fixed_rank_semicontinuous}
Let $G$ be a subgroup of $\Diff^1(M,K)$.
Then the fixed rank of $G$ is upper semi-continuous on $M$. Consequently
the subset where the fixed rank is $\ge m$ is {\em closed}, for any $m$.
\end{lemma}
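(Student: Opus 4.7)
The plan is to handle the case $p \notin \fix(G)$ separately from $p \in \fix(G)$, and in the latter case to reduce the statement to the classical fact that the rank of a continuously varying family of matrices is lower semi-continuous (equivalently, the nullity is upper semi-continuous).

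First, if $p \notin \fix(G)$, then some $g \in G$ satisfies $g(p) \neq p$. By continuity of $g$, there is an open neighborhood $U$ of $p$ with $g(x) \neq x$ for every $x \in U$, so the fixed rank equals $-1$ throughout $U$, matching the value at $p$. This disposes of the $f(p) = -1$ case.

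Next, suppose $p \in \fix(G)$ and the fixed rank at $p$ equals $k \geq 0$. Choose a chart near $p$ trivializing the tangent bundle, so that for each $g \in G$ fixing a point $x$ in the chart, $dg|_x$ is identified with a matrix in $\GL(n,\R)$ that depends continuously on $x$ (since $g$ is $C^1$). The fixed subspace at $p$ is $\bigcap_{g\in G} \ker(dg|_p - \Id)$; because $T_p M$ is finite-dimensional, this descending family of linear subspaces attains its minimum on a finite subfamily, so I can choose $g_1, \ldots, g_m \in G$ with
\[
\dim\Bigl(\bigcap_{i=1}^m \ker(dg_i|_p - \Id)\Bigr) = k.
\]
Form the continuous family of linear maps $\Phi_x \colon T_x M \to \bigoplus_{i=1}^m T_x M$ defined by $\Phi_x(v) = \bigl((dg_i|_x - \Id)v\bigr)_{i=1}^m$, valid for $x$ near $p$ (and all such $x$ if we restrict to $x \in \fix(G)$, otherwise a chosen $g_i$ moves $x$ and the argument from the previous paragraph kicks in). The rank of $\Phi_x$ is lower semi-continuous in $x$ by the standard minor-determinant criterion, hence $\dim\ker\Phi_x$ is upper semi-continuous. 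In particular there is a neighborhood $U$ of $p$ on which
\[
\dim\Bigl(\bigcap_{i=1}^m \ker(dg_i|_x - \Id)\Bigr) \leq k.
\]
For any $x \in U$, either $x \notin \fix(G)$, in which case fixed rank at $x$ is $-1 \leq k$, or $x \in \fix(G)$, in which case the fixed subspace at $x$ is contained in the intersection of $\ker(dg_i|_x - \Id)$ over just $g_1,\dots,g_m$, hence has dimension at most $k$. Either way fixed rank at $x$ does not exceed fixed rank at $p$, so upper semi-continuity holds at $p$.

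The main obstacle is simply keeping the bookkeeping straight between the two cases (fix versus non-fix points) and recognizing that, although $G$ may be uncountable, finitely many elements suffice to cut out the fixed subspace at any given point; once that reduction is made, the result reduces to a routine semi-continuity statement for matrix rank. The closedness of $\{x : \text{fixed rank} \geq m\}$ is then immediate from the definition of upper semi-continuity applied to the integer-valued function in question.
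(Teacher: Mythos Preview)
Your proof is correct but takes a different route from the paper's. The paper argues sequentially: given $p_i \to p$ with fixed rank $\ge m$ at each $p_i$, it picks an $m$-plane $\pi_i \subset T_{p_i}M$ fixed by every $dg|_{p_i}$, passes to a subsequence so that $\pi_i$ converges in the Grassmannian to some $\pi \subset T_pM$, and then uses $C^1$ continuity to conclude $dg|_p$ fixes $\pi$ for all $g$. Your argument instead works pointwise at $p$: you exploit finite-dimensionality of $T_pM$ to reduce to finitely many $g_1,\dots,g_m$ that already cut out the fixed subspace at $p$, and then invoke lower semi-continuity of the rank of the bundled map $\Phi_x$. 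The paper's Grassmannian argument handles all of $G$ at once without any finite reduction, at the cost of a compactness extraction; your approach avoids the Grassmannian entirely and is closer in spirit to a standard linear-algebra semicontinuity statement, at the cost of the (easy) observation that finitely many elements suffice. Both are short and natural; yours is arguably more elementary, while the paper's is slightly more uniform in its treatment of the group.
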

\begin{proof}
Let $p_i \to p$ and suppose that $G$ has fixed rank $\ge m$ on all $p_i$.
If $m=-1$ there is nothing to prove. Otherwise, the $p_i$ are all fixed
by every $g\in G$ and there is a subspace $\pi_i \subset T_{p_i}$ of
dimension $m$ fixed by every $dg|_{p_i}$. By compactness of the Grassmannian 
of $m$-dimensional subspaces of $\R^n$, after passing to a subsequence we
can assume that $\pi_i$ converges to some $m$-dimensional subspace
$\pi \subset T_p$. Observe that for every $g$ we must have that $dg|_p$
fixes $\pi$, or else some $dg|_{p_i}$ would fail to fix $\pi_i$, since the
action is $C^1$. Thus the fixed rank of $G$ at $p$ is $\ge m$.
\end{proof}

We can now give the proof of Theorem~\ref{theorem:cilo}:

\begin{proof}
Let $G$ be a nontrivial 
finitely generated subgroup of $\Diff_+^1(M,K)$, and
let $X$ be the subset of $G$ where the fixed rank is $\ge n-1$. Since
$G$ is arbitrary, it suffices
to show that $G$ admits a surjection to $\Z$.

By Lemma~\ref{lemma:fixed_rank_semicontinuous} the set $X$ is closed. Furthermore,
it is nonempty, since it includes $K$. Since $G$ is nontrivial,
some point of $M$ is moved by some element of $G$, and therefore the fixed rank
is $-1$ at that point; since $M$ is connected, it follows that 
the frontier $\fro(X)$ is nonempty.

Let $p \in \fro(X)$ be arbitrary, and let $\pi \subset T_pM$ be an
$n-1$ dimensional plane fixed by $dg|_p$ for all $g\in G$.
Let $\rho:G \to \GL(n,\R)$ send $g \to dg|_p$. By Lemma~\ref{lemma:linear_stabilizer}
and Example~\ref{example:codimension_1}
the image of $\rho$ is locally indicable. If the image is nontrivial we are
done. So suppose the image is trivial. Since $p$ is in the frontier of
$X$ it must be in the frontier of $\fix(G)$ (or else it would be in the interior
of the set where the fixed rank is $n$) so the image of $G$ in the group of
germs of diffeomorphisms fixing $p$ is nontrivial. So the Thurston stability
theorem (i.e.\/ Theorem~\ref{theorem:Thurston_stability_theorem}) implies
that $G$ surjects to $\Z$. This completes the proof.
\end{proof}

\section{Bi-orderability}\label{section:biorderable}

If a left-order $\prec$ of a group $G$ is also invariant under right-multiplication, 
we'll call it a {\em bi-order} and say that $G$ is {\em bi-orderable}.  It is easy 
to see that a left-order $\prec$ is a bi-order if and only if its positive cone 
$$P := \{ g \in G \text{ such that } 1 \prec g\}$$ 
is invariant under conjugation; i.e.\/ if and only if
$g^{-1}Pg \subset P$ for all $g \in G$.

In this section we discuss bi-orderability for various subgroups of
$\Homeo(I^n,\partial I^n)$ in specific dimensions, and with various kinds of regularity.

\subsection{Bi-orderability in general}

First of all, it should be noted that the distinction between left orderability and bi-orderability
is not vacuous:

\begin{example}\label{example:product_of_conjugates}
If $G$ is a group in which there is a nontrivial element $g$ so that some product of
conjugates of $g$ is trivial, then $G$ is not bi-orderable. For, in any left ordering,
we may assume that $g \in P$ (up to reversing the order). So, for example,
the Klein bottle group $\langle a,b \; |\; aba^{-1} = b^{-1} \rangle$ is not bi-orderable,
though it {\em is} locally indicable.
\end{example}

One may summarize the existence of LO groups which are not bi-orderable in the following
proposition:

\begin{proposition}
$\Homeo(I, \partial I)$ is not bi-orderable.
\end{proposition}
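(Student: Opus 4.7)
The plan is to exhibit an explicit countable subgroup of $\Homeo(I,\partial I)$ that fails to be bi-orderable, and then invoke the trivial fact that bi-orderability passes to subgroups (just restrict the order). The natural witness is already supplied by Example~\ref{example:product_of_conjugates}: the Klein bottle group
\[
K \;:=\; \langle a, b \mid aba^{-1} = b^{-1}\rangle.
\]
Here $b$ and $aba^{-1}$ are conjugate yet equal to $b$ and $b^{-1}$; so in any putative bi-order they would both have to lie on the same side of $1$, forcing $b=1$, a contradiction. Hence $K$ is not bi-orderable.

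Next I would observe that $K$ \emph{is} left-orderable: it is a semidirect product $\Z\rtimes\Z$, hence an extension of locally indicable groups, hence locally indicable, hence left-orderable by Theorem~\ref{theorem:Burns_Hale}. (Alternatively, Lemma~\ref{lemma:ses} applied to $1\to\langle b\rangle\to K\to\langle a\rangle\to 1$ gives left-orderability directly.)

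Finally, since $K$ is countable and left-orderable, Lemma~\ref{lemma:LO_action} produces an injective homomorphism $K\hookrightarrow \Homeo(I,\partial I)$. If $\Homeo(I,\partial I)$ admitted a bi-order $\prec$, then its restriction to the image of $K$ would be a bi-order on $K$, contradicting what we just established. Therefore $\Homeo(I,\partial I)$ is not bi-orderable.

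There is essentially no technical obstacle here: all the work has been done in assembling the tools, namely Lemma~\ref{lemma:LO_action} (which promotes any countable left-orderable group into $\Homeo(I,\partial I)$) and Example~\ref{example:product_of_conjugates} (which singles out $K$ as a left-orderable but not bi-orderable group). The only thing to double-check is the passage of bi-orders to subgroups, which is immediate from the definition. It is worth noting that this strategy is completely non-constructive on the $\Homeo(I,\partial I)$ side — it does not give us an explicit pair of homeomorphisms satisfying $aba^{-1}=b^{-1}$, which as one quickly sees cannot be produced by any ``naive'' attempt since an orientation-preserving conjugation of $I$ cannot swap the attracting and repelling sides of an interior fixed point; instead the embedding comes, via Lemma~\ref{lemma:LO_action}, from the dynamical realization of an arbitrary left order.
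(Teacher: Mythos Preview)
Your proof is correct and follows exactly the paper's approach: the paper's one-sentence argument is that $\Homeo(I,\partial I)$ contains every countable LO group (Lemma~\ref{lemma:LO_action}), including ones that are not bi-orderable, and you have simply made this concrete by choosing the Klein bottle group from Example~\ref{example:product_of_conjugates} as the witness. One minor remark: your closing comment that an explicit pair $(a,b)$ ``cannot be produced by any naive attempt'' is true as stated, but note that the paper does later give an explicit $C^1$ construction (with $b$ having infinitely many fixed points permuted by $a$), so such pairs do exist in $\Homeo(I,\partial I)$ --- only the single-fixed-point picture is obstructed.
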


This is because $\Homeo(I, \partial I)$ contains isomorphic copies of all 
countable LO groups, many of which are not bi-orderable.  Suspending to 
higher dimensions, we conclude:

\begin{corollary}
For all $n \ge 1$, $\Homeo(I^n, \partial I^n)$ is not bi-orderable.
\end{corollary}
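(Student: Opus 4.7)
The plan is to leverage the preceding proposition together with a suspension embedding, using the fact that bi-orderability is inherited by subgroups. Concretely, if $G$ is bi-orderable and $H \le G$, then the restriction of the bi-order on $G$ to $H$ is a bi-order on $H$; equivalently, the positive cone $P \cap H$ is conjugation-invariant in $H$. So to show $\Homeo(I^n,\partial I^n)$ is not bi-orderable, it suffices to exhibit a subgroup that is not bi-orderable.

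First I would construct, for each $n \ge 1$, an injective homomorphism $\Homeo(I,\partial I) \hookrightarrow \Homeo(I^n,\partial I^n)$. The natural way is to iterate a topological analog of the suspension homomorphism of Lemma~\ref{lemma:suspension_of_PL}: given $f \in \Homeo(I^{k},\partial I^{k})$, embed $I^{k}$ as the equatorial slice of the bipyramid $P^{k+1}$ (the convex hull of $I^k$ with the two points $(0,\dots,0,\pm 1)$), cone $f$ to the two apex points to get $Sf \in \Homeo(P^{k+1},\partial P^{k+1})$, and then extend by the identity on $I^{k+1} - P^{k+1}$. This construction is purely topological — no PL structure is needed for the coning step — so it yields an injection $\Homeo(I^{k},\partial I^{k}) \hookrightarrow \Homeo(I^{k+1},\partial I^{k+1})$. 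Iterating gives the desired embedding of $\Homeo(I,\partial I)$ into $\Homeo(I^n,\partial I^n)$.

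Combining this embedding with the preceding proposition, which states that $\Homeo(I,\partial I)$ is not bi-orderable, immediately gives the corollary: any bi-order on $\Homeo(I^n,\partial I^n)$ would restrict to a bi-order on the embedded copy of $\Homeo(I,\partial I)$, contradicting the proposition.

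There is no substantial obstacle here; the only thing that needs a brief check is that the iterated suspension map is a well-defined injective homomorphism in the topological (not just PL) category, and this is routine since the coning formula $Sf$ on $P^{k+1}$ depends continuously on $f$ and respects composition. In fact, one could even take the shorter route of observing that $\Homeo(I^n,\partial I^n)$ contains every countable LO group (via a product of the 1-dimensional embedding of Lemma~\ref{lemma:LO_action} with the suspension trick), which gives the conclusion a fortiori.
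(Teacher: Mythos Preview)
Your proposal is correct and is essentially the paper's own argument: the paper simply says ``Suspending to higher dimensions, we conclude'' after the proposition that $\Homeo(I,\partial I)$ is not bi-orderable, which is exactly your use of the topological suspension embedding together with the fact that subgroups of bi-orderable groups are bi-orderable.
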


\subsection{Bi-orderability in PL}

The following was observed in a slightly different context by Chehata \cite{Chehata}.

\begin{proposition}
$\PL(I, \partial I)$ is bi-orderable.
\end{proposition}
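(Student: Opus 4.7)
The plan is to build an explicit conjugation-invariant positive cone $P\subset \PL(I,\partial I)$; by the criterion recalled in the preceding discussion, this will determine a bi-order via $f\prec g$ iff $f^{-1}g\in P$. The construction exploits the rigidity of PL maps near their support boundary: for each $f\in \PL(I,\partial I)$ with $f\ne \Id$, set $x_f := \inf\{x\in I : f(x)\ne x\}$. Because $f$ is PL and fixes $[-1,x_f]$ pointwise, its restriction to some right neighborhood $(x_f,x_f+\delta)$ is affine with $f(x_f)=x_f$ and some positive slope $\lambda_f$; moreover $\lambda_f\ne 1$, for otherwise $f$ would agree with the identity on $[-1,x_f+\delta)$, contradicting the minimality of $x_f$. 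I then declare $P := \{f\ne \Id : \lambda_f > 1\}$, equivalently the nonidentity elements with $f(x)>x$ for $x$ just to the right of $x_f$.

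Next I would check the positive-cone axioms. Trichotomy follows from $\fix(f^{-1})=\fix(f)$ (hence $x_{f^{-1}}=x_f$) together with $\lambda_{f^{-1}}=\lambda_f^{-1}$, so exactly one of $f$ and $f^{-1}$ lies in $P$. For multiplicative closure, given $f,g\in P$ let $c:=\min(x_f,x_g)$; then $fg$ fixes $[-1,c]$. When $x_f\ne x_g$, near $c$ the one with the larger index is trivial, so $fg$ inherits the slope of the other and $fg\in P$. When $x_f=x_g=c$, both $f$ and $g$ are affine near $c$ with slopes $>1$, so $fg$ is affine near $c$ with slope $\lambda_f\lambda_g>1$, and again $fg\in P$.

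The main step will be conjugation invariance, where the monotonicity of $\PL$ homeomorphisms of $I$ is essential. Let $f\in P$ and $g\in \PL(I,\partial I)$, and set $h:=g^{-1}fg$. Since $g$ is an increasing PL bijection of $I$, it maps $\fix(h)$ bijectively onto $\fix(f)$; in particular $x_h=g^{-1}(x_f)$. For $y$ just to the right of $x_h$, the image $g(y)$ lies just to the right of $x_f$, so $f(g(y))>g(y)$, and applying the increasing map $g^{-1}$ yields $h(y)=g^{-1}(f(g(y)))>y$. Hence $h\in P$, so $g^{-1}Pg\subseteq P$ for every $g$, which by the criterion above upgrades the left-order defined by $P$ to a bi-order on $\PL(I,\partial I)$.
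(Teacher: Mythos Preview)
Your proof is correct and follows exactly the same approach as the paper: both define the positive cone to consist of those nonidentity $f$ whose graph first leaves the diagonal with slope greater than $1$ (equivalently, $f(x)>x$ immediately to the right of the leftmost non-fixed point), and both invoke the standard positive-cone criterion for bi-orderability. The paper merely asserts closure under composition and conjugation, whereas you supply the verifications in full; your argument is a fleshed-out version of the same construction.
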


One can take as positive cone $P$ for a bi-ordering all PL functions $f:I \to I$ whose graph departs from the diagonal for the first time with slope greater than 1.  In other words, if $x_0$ is the maximal element of $I = [-1, 1]$ such that $f(x) = x$ for all $x \in [-1, x_0]$, then for all sufficiently small 
$\epsilon > 0$ we have $f(x_0 + \epsilon) > x_0 + \epsilon$.  Then $P$ is closed under composition of functions, conjugation, and $\PL(I, \partial I) = \{1\} \sqcup P \sqcup P^{-1}$, so it defines a bi-order by the recipe $g \prec h \iff g^{-1}h \in P$.

\begin{proposition}\label{proposition:non-biorderable}
$\PL_\omega(I^2, \partial I^2)$ is not bi-orderable.
\end{proposition}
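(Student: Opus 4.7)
The plan is to exhibit a nontrivial \emph{generalized torsion} element $g$ in $\PL_\omega(I^2,\partial I^2)$---that is, a nontrivial element which is conjugate to its own inverse. Then $g\cdot(\sigma g\sigma^{-1}) = \Id$ is a product of two conjugates of $g$ totalling the identity, which is incompatible with bi-orderability by Example~\ref{example:product_of_conjugates}.

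My proposed construction of the conjugating element is as follows. Let $R = [-\tfrac12,\tfrac12]^2\subset I^2$ and define $\sigma\in\PL_\omega(I^2,\partial I^2)$ to be rotation by $\pi$ on $R$---the affine involution $(x,y)\mapsto(-x,-y)$---extended across $I^2\setminus R$ by a PL area-preserving ``half Dehn twist'' of the annular region, interpolating from rotation by $\pi$ on $\partial R$ to the identity on $\partial I^2$.  Such a half twist is built by an elaboration of the twist construction of Example~\ref{example:Dehn_twist}.  The key feature of $\sigma$ is that $\sigma^2|_R = \Id$.

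Next, pick two small disjoint PL subsquares $Q_1,Q_2\subset R$ interchanged by $\sigma$, any nontrivial $g_0\in\PL_\omega(Q_1,\partial Q_1)$, and set
$$g|_{Q_1} = g_0,\qquad g|_{Q_2} = (\sigma g_0^{-1}\sigma^{-1})|_{Q_2},\qquad g|_{I^2\setminus(Q_1\cup Q_2)} = \Id.$$
Both pieces are the identity on $\partial Q_1$ and $\partial Q_2$ respectively, so $g$ is a well-defined element of $\PL_\omega(I^2,\partial I^2)$, and $g\neq\Id$ since $g_0\neq\Id$. A direct computation using $\sigma^2|_R=\Id$ then gives, for $p\in Q_1$,
$$(\sigma g\sigma^{-1})(p) = \sigma\bigl((\sigma g_0^{-1}\sigma^{-1})(\sigma^{-1}(p))\bigr) = \sigma^2 g_0^{-1}\sigma^{-2}(p) = g_0^{-1}(p) = g^{-1}(p),$$
using $\sigma^{-2}(p) = p$ and $\sigma^2(g_0^{-1}(p)) = g_0^{-1}(p)$ (both $p$ and $g_0^{-1}(p)$ lie in $R$); the symmetric computation handles $Q_2$, and both sides are trivial outside $Q_1\cup Q_2$. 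Thus $\sigma g\sigma^{-1} = g^{-1}$, so $g$ is generalized torsion and Example~\ref{example:product_of_conjugates} finishes the proof.

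The main obstacle is the construction of $\sigma$ on the annular region $I^2\setminus R$: the piece on $R$ is an honest affine rotation with nothing to check, but on the annulus one must produce a PL area-preserving self-homeomorphism that fixes $\partial I^2$ pointwise and matches rotation-by-$\pi$ along $\partial R$.  This is the PL area-preserving analog of a half-twist of an annulus, and requires a careful adaptation of the twist model of Example~\ref{example:Dehn_twist} (with the twist-angle profile chosen to equal $\pi$ at the inner boundary); once $\sigma$ has been produced as an element of $\PL_\omega(I^2,\partial I^2)$, the rest of the argument is purely algebraic.
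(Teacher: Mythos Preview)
Your proposal is correct and follows essentially the same approach as the paper: construct a PL area-preserving ``half Dehn twist'' $\sigma$ (the paper calls it $f=h^6$, built from the $12$th-root-of-twist $h$ of Example~\ref{example:Dehn_twist}) which acts as rotation by $\pi$ on an inner square, then support $g$ on two small subsquares interchanged by $\sigma$ so that $\sigma g\sigma^{-1}=g^{-1}$. The only cosmetic difference is that the paper places $h$ and $h^{-1}$ on the two subsquares and invokes the $180^\circ$-rotation symmetry of $h$, whereas you place $g_0$ and $\sigma g_0^{-1}\sigma^{-1}$---a slightly cleaner bookkeeping choice that makes the conjugation identity automatic for arbitrary $g_0$.
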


To see this, we consider two functions $f, g \in \PL_\omega(I^2, \partial I^2)$ defined 
as follows.  Let $h$ denote the function illustrated in Figure~\ref{elementary_twist}.  
Recall that $h^{12}$ is a Dehn twist, which is the identity on the inner square, as well 
as on $\partial I^2$.  Let $f := h^6$, so that $f$ rotates the inner square by 180 
degrees.  Referring to Figure~\ref{construction}, define $g$ to be the identity 
outside the little squares, which are strictly inside the inner square rotated by $f$.  
On the little square on the left, let $g$ act as $h$, suitably scaled, and on the 
little square to the right let $g$ act as $h^{-1}$.  Noting that $f$ interchanges 
the little squares, and that $h$ commutes with 180 degree rotation, one checks 
that $f^{-1}gf = g^{-1}$.

Such an equation cannot hold (for $g$ not the identity) in a bi-ordered group, as 
it would imply the contradiction that $g$ is greater than the identity if and only 
if $g^{-1}$ is greater than the identity (this is just the Klein bottle group
from Example~\ref{example:product_of_conjugates}).

\begin{figure}[htpb]
\labellist
\small\hair 2pt
\pinlabel $h$ at 100 125
\pinlabel $h^{-1}$ at 150 125
\pinlabel $\Id$ at 125 175
%\pinlabel $\xrightarrow{\hspace*{1cm}}$ at 300 125
\endlabellist
\centering
\includegraphics[scale=0.6]{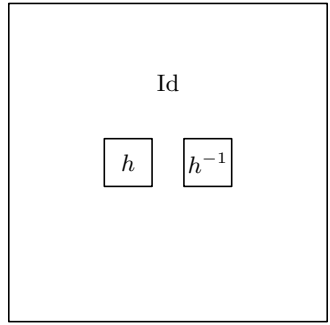}
\caption{Building the function $g \in \PL_\omega(I^2, \partial I^2)$}\label{construction}
\end{figure}

\begin{corollary}
For $n \ge 2$ none of the groups $\PL_\omega(I^n, \partial I^n), \PL(I^n, \partial I^n)$ 
is bi-orderable.
\end{corollary}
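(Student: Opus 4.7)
The plan is to bootstrap Proposition~\ref{proposition:non-biorderable} using two standard observations. First, bi-orderability descends to subgroups (just restrict the order), so to show $\PL(I^n,\partial I^n)$ is not bi-orderable it is enough to exhibit a non-bi-orderable subgroup; in particular, since $\PL_\omega(I^n,\partial I^n)\subseteq \PL(I^n,\partial I^n)$, it suffices to handle only the volume-preserving groups. Second, the Klein-bottle relation $f^{-1}gf=g^{-1}$ with $g\ne\Id$ is inherited by the image under any injective homomorphism, and (as recorded after Proposition~\ref{proposition:non-biorderable}) precludes bi-orderability of any group containing such a pair. So if we can inject the Klein-bottle pair of Proposition~\ref{proposition:non-biorderable} into $\PL_\omega(I^n,\partial I^n)$ for every $n\ge 2$, we are done.

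To realize this, I would use the suspension homomorphism $S$ of Lemma~\ref{lemma:suspension_of_PL}. The substantive check is that $S$ restricts to an injective homomorphism
$$S:\PL_\omega(I^n,\partial I^n)\longrightarrow \PL_\omega(I^{n+1},\partial I^{n+1}).$$
The essential observation is elementary: for any top-dimensional simplex $\sigma\subset I^n\times\{0\}\subset I^{n+1}$, the cone $S^{\pm}\sigma$ with apex $(0,\ldots,0,\pm 1)$ has $(n{+}1)$-volume equal to $\tfrac{1}{n+1}\mathrm{vol}_n(\sigma)$ by the base-times-height formula, since the apex lies at distance $1$ from the hyperplane $\{x_{n+1}=0\}$. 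Hence if $f$ preserves $n$-volume on $I^n$, then $Sf$ maps $S^{\pm}\sigma$ affinely onto $S^{\pm}f(\sigma)$, a simplex of the same $(n{+}1)$-volume, so the linear part of $Sf$ on $S^{\pm}\sigma$ has determinant of absolute value $1$. Orientation-preservation of $Sf$ is automatic from the construction, so $Sf$ preserves $\omega$ on the bipyramid $P^{n+1}$, and extending by the identity off $P^{n+1}$ preserves $\omega$ tautologically.

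Iterating $S$ a total of $n-2$ times then gives an injective homomorphism $\PL_\omega(I^2,\partial I^2)\hookrightarrow\PL_\omega(I^n,\partial I^n)$ for every $n\ge 2$. Applying this embedding to the Klein-bottle pair $(f,g)$ produced by Proposition~\ref{proposition:non-biorderable} yields a pair in $\PL_\omega(I^n,\partial I^n)$ still satisfying $f^{-1}gf=g^{-1}$ with $g\ne\Id$, which obstructs bi-orderability there and, by the subgroup observation, also in $\PL(I^n,\partial I^n)$. The only non-formal step is the cone-volume computation above; I do not expect any real obstacle, as this is a one-line instance of the standard formula for the volume of a simplex with apex over a base.
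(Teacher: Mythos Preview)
Your argument is correct and is essentially the same as the paper's: the paper deduces the corollary from Proposition~\ref{proposition:non-biorderable} together with Lemma~\ref{lemma:suspension_of_PL}, noting (without details) that suspension also takes $\PL_\omega(I^n,\partial I^n)$ into $\PL_\omega(I^{n+1},\partial I^{n+1})$. You supply the one-line cone-volume check the paper omits, but otherwise the approach is identical.
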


This follows from Lemma \ref{lemma:suspension_of_PL}, noting that suspension also 
takes $\PL_\omega(I^n, \partial I^n)$ isomorphically into 
$\PL_\omega(I^{n+1}, \partial I^{n+1})$.

Recall Theorem \ref{theorem:pillow}: if $M$ is a connected PL $n$-manifold and $K$ 
is a nonempty closed PL $n-1$ dimensional submanifold, then $\PL(M,K)$ is LO.  
Noting that one may include $\PL(I^n, \partial I^n)$ in $\PL(M,K)$, by acting 
on a small $n$-ball in $M$, we see also

\begin{corollary}
For $n \ge 2$, $\PL(M,K)$ is not bi-orderable.
\end{corollary}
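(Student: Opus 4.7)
The plan is to leverage the previous corollary (which says $\PL(I^n,\partial I^n)$ is not bi-orderable for $n\ge 2$) together with the fact that bi-orderability is inherited by subgroups: if $G$ admits a bi-order, then so does any subgroup of $G$ via restriction. Thus it suffices to exhibit an injective homomorphism
$$\PL(I^n,\partial I^n) \hookrightarrow \PL(M,K).$$

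To produce this embedding, I would first locate a PL $n$-ball $B$ in $M$ disjoint from $K$. Since $K$ has dimension at most $n-1$ and is closed, the complement $M\setminus K$ is open and nonempty (the latter because $K\ne M$ for dimension reasons, using $n\ge 2$). Connectedness of $M$ plus the fact that $M\setminus K$ is a nonempty open subset of a PL $n$-manifold lets me find a PL embedding $\phi:I^n \hookrightarrow M\setminus K$ whose image $B=\phi(I^n)$ is a PL $n$-ball.

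Next, given $f\in \PL(I^n,\partial I^n)$, I define $\tilde f\in\PL(M,K)$ by $\tilde f = \phi f \phi^{-1}$ on $B$ and $\tilde f = \textnormal{Id}$ on $M\setminus B$. Since $f$ is the identity on $\partial I^n$, the two definitions agree on $\partial B$, so $\tilde f$ is a well-defined PL self-homeomorphism of $M$; it is fixed pointwise on $M\setminus \textnormal{int}(B)\supset K$, so indeed $\tilde f\in\PL(M,K)$. The assignment $f\mapsto \tilde f$ is clearly a group homomorphism, and it is injective because $\tilde f|_B$ determines $f$ via $\phi$.

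Combining this embedding with the previous corollary finishes the argument: if $\PL(M,K)$ were bi-orderable, its subgroup (isomorphic to) $\PL(I^n,\partial I^n)$ would inherit a bi-order, contradicting the corollary. There is no real obstacle here—the whole content is the construction of the ball $B$ disjoint from $K$, which is immediate for $n\ge 2$ since $K$ has strictly smaller dimension than $M$.
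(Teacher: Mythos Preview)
Your argument is correct and is exactly the paper's approach: the paper's entire proof is the one-line remark ``one may include $\PL(I^n,\partial I^n)$ in $\PL(M,K)$, by acting on a small $n$-ball in $M$,'' and you have simply spelled this out (correctly placing the ball in $M\setminus K$ so that extension by the identity really does fix $K$). One trivial quibble: the parenthetical ``using $n\ge 2$'' when arguing $K\ne M$ is misplaced---$K$ has dimension $n-1<n$ for any $n\ge 1$, so the hypothesis $n\ge 2$ is only needed to invoke the previous corollary, not to find the ball.
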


\subsection{Bi-orderability in Diff}

By an argument similar to Proposition \ref{proposition:non-biorderable} one can show 
\begin{proposition}
For $n \ge 2$ and for any $0 \le p\le \infty$, the group $\Diff^p(I^n, \partial I^n)$ is not bi-orderable.
\end{proposition}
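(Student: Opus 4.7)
The plan is to mimic the PL argument of Proposition~\ref{proposition:non-biorderable} in the smooth category. Concretely, for each $n \ge 2$ and each $0 \le p \le \infty$, I will construct $f, g \in \Diff^p(I^n,\partial I^n)$ with $g \ne \Id$ and $f^{-1}gf = g^{-1}$. Then $(f^{-1}gf)\cdot g = 1$ is a product of conjugates of the nontrivial element $g$ equal to the identity, so Example~\ref{example:product_of_conjugates} rules out any bi-ordering of $\Diff^p(I^n,\partial I^n)$. The whole argument will in fact produce $f, g \in \Diff^\infty(I^n,\partial I^n)$, which handles every $p$ at once.

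First I would build $f$ as a smooth ``half-twist''. Fix concentric closed balls $B_1 \subset B_0$ contained in the interior of $I^n$. Choose a $C^\infty$ monotone cutoff $\phi:[0,\infty)\to[0,1]$ that is identically $1$ on a neighborhood of the radius of $B_1$ and identically $0$ outside the radius of $B_0$. Define $f$ on $B_0$ to rotate the point at radius $r$ from the common center by angle $\pi\phi(r)$ in the coordinate $2$-plane spanned by the first two axes of $\R^n$, and extend $f$ by the identity outside $B_0$. Then $f$ is $C^\infty$, agrees with the identity near $\partial I^n$, and restricts on (a neighborhood of) $B_1$ to a rigid rotation by $\pi$ in the chosen $2$-plane. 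This works in every dimension $n \ge 2$, so no separate suspension step is needed.

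Next I would build $g$. Since a $\pi$-rotation is a free involution off its fixed $(n-2)$-plane, I can choose two small disjoint open balls $B_+, B_- \subset B_1$ with $f(B_+) = B_-$. Let $h$ be any nontrivial $C^\infty$ diffeomorphism of $B_+$ supported compactly in its interior, and define
\[
g \;:=\; \begin{cases} h & \text{on } B_+,\\ fh^{-1}f^{-1} & \text{on } B_-,\\ \Id & \text{elsewhere on } I^n. \end{cases}
\]
Because $h$ is the identity near $\partial B_+$, the three pieces glue smoothly, so $g \in \Diff^\infty(I^n,\partial I^n) \subset \Diff^p(I^n,\partial I^n)$. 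A direct case check on $B_+$, on $B_-$, and on the complement confirms $f^{-1}gf = g^{-1}$, and $g \ne \Id$ since $h \ne \Id$. The only mild technical point, and essentially the only obstacle, is to ensure that $f$ acts as a genuinely rigid rotation on a neighborhood of $B_+\cup B_-$ so that $fh^{-1}f^{-1}$ really is a $C^\infty$ diffeomorphism of $B_-$ matching the identity smoothly across $\partial B_-$; this is automatic once $\phi$ is taken constantly equal to $1$ on an open neighborhood of the radius of $B_1$, which we are free to arrange.
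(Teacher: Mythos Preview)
Your proof is correct and follows essentially the same approach the paper indicates (``by an argument similar to Proposition~\ref{proposition:non-biorderable}''): build a half-twist $f$ that exchanges two small regions, and a $g$ supported on those regions acting oppositely so that $f^{-1}gf=g^{-1}$. Your version is mildly slicker in two respects --- you define $g$ on $B_-$ directly as $fh^{-1}f^{-1}$ (so no symmetry of $h$ is needed, only that $f$ is an involution on $B_1$), and you handle all $n\ge 2$ at once via a rotation in a coordinate $2$-plane rather than treating $n=2$ and suspending --- but these are refinements of the same idea rather than a different route.
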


Note that in every case bi-orderability is ruled out by the existence of a
nontrivial element which is conjugate to its inverse.

For $n=1$ and $p=0,1$ bi-orderability in $\Diff^p(I,\partial I)$ is ruled out for
the same algebraic reason:

\begin{example}[$C^1$ example]
For $i \in \Z$ let $x_i$ be a discrete, ordered subset of the interior of $I$
accumulating at the endpoints like the harmonic series. Let $f$ be a $C^1$
diffeomorphism whose fixed point set in the interior of $I$
is exactly the union of the $x_i$, and which alternates between translating
in the positive and negative direction on intervals $(x_i,x_{i+1})$ for $i$
respectively even and odd. Let $g$ take $x_i$ to $x_{i+1}$ and conjugate the
action of $f$ to $f^{-1}$. This construction can evidently be made $C^1$ in
the interior. Moreover, if we arrange for $f'$ to converge uniformly to $0$ towards
the endpoints, then the same is true of $g$, since $g$ acts there almost like
a linear map $[1-1/i,1] \to [1-1/(i+1),1]$, whose derivatives converge uniformly
to $0$.
\end{example}

On the other hand, this construction cannot be made $C^2$, by Kopell's Lemma,
which says the following:

\begin{theorem}[Kopell's Lemma \cite{Kopell}, Lemma~1]\label{theorem:Kopell_lemma}
Let $f$ and $g$ be commuting elements of $\Diff^2(I,\partial I)$. If $g$ has
no fixed point in the interior of $I$ but $f$ has a fixed point in the interior of
$I$ then $f=\Id$.
\end{theorem}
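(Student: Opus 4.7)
The plan is to prove Kopell's Lemma by contradiction, via a fundamental-domain construction combined with a bounded distortion estimate that crucially uses the $C^2$ hypothesis.

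First I would normalize: after possibly replacing $g$ by $g^{-1}$, assume $g(x)<x$ on the interior of $I=[-1,1]$, so that $g^n(x)\to -1$ for every interior $x$. Let $p\in(-1,1)$ with $f(p)=p$. Since $fg=gf$, every $g^n(p)$ is a fixed point of $f$; since $g^n(p)\to -1$ and $f$ is $C^1$, evaluating the difference quotient along this sequence forces $f'(-1)=1$. The intervals $J_n := [g^{n+1}(p),g^n(p)]$ for $n\ge 0$ are pairwise disjoint with $\sum_n |J_n|\le p+1$, each is preserved by $f$ (as $f$ fixes both endpoints), and $J_0$ is a fundamental domain for $g$ on $(-1,p]$ with $g^n(J_0)=J_n$.

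The next step, and the only place I would use the $C^2$ hypothesis, is the bounded distortion estimate. Since $g\in C^2$ and $g'>0$ on the compact set $I$, the function $\log g'$ is Lipschitz with some constant $L$, so for any $x,y\in J_0$ the chain rule gives
\[
\left|\log\frac{(g^n)'(x)}{(g^n)'(y)}\right| \;\le\; L\sum_{i=0}^{n-1}|g^i(x)-g^i(y)| \;\le\; L\sum_{i=0}^{n-1}|J_i| \;\le\; L(p+1).
\]
This yields a constant $K\ge 1$ with $(g^n)'(x)/(g^n)'(y)\le K$ for all $x,y\in J_0$ and all $n$.

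To derive the contradiction, I would suppose $f$ is not the identity on $J_0$ and pick $z\in J_0$ with $f(z)\ne z$. The commutation $f(g^n(z))=g^n(f(z))$ together with the mean value theorem and the distortion bound gives $|f(z_n)-z_n|\ge C\,|J_n|$ for $z_n:=g^n(z)$ and some $C>0$ independent of $n$. On the other hand, since $f$ fixes the left endpoint $L_n:=g^{n+1}(p)$ of $J_n$,
\[
|f(z_n)-z_n| \;=\; \left|\int_{L_n}^{z_n}\bigl(f'(t)-1\bigr)\,dt\right| \;\le\; |J_n|\cdot\sup_{J_n}|f'-1|,
\]
so $\sup_{J_n}|f'-1|\ge C$ for every $n$. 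But $J_n$ shrinks to the single point $-1$ as $n\to\infty$, and continuity of $f'$ combined with $f'(-1)=1$ forces $\sup_{J_n}|f'-1|\to 0$, the desired contradiction. Hence $f=\Id$ on $J_0$; commutativity then spreads this to each $J_n$, and so to all of $(-1,p]$, and the symmetric argument on $[p,1)$ (using $g^{-n}(p)\to 1$) completes the proof.

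The main conceptual point---and the hard part to locate---is the correct quantitative use of $C^2$: the only place it enters is the bounded distortion estimate on $g^n|_{J_0}$, and every other step only needs $f,g\in C^1$. That this cannot be weakened is exactly what the $C^1$ example immediately preceding the theorem demonstrates.
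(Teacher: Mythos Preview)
Your proof is correct and is essentially the classical argument for Kopell's Lemma. However, the paper does not contain its own proof of this statement: Theorem~\ref{theorem:Kopell_lemma} is quoted from Kopell's original paper \cite{Kopell} as a black box and used to derive the Corollary that follows. There is therefore nothing in the paper to compare your argument against.

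For what it is worth, your write-up matches the standard proof (as found, for instance, in Navas's book or in Kopell's original article): the decomposition into fundamental-domain intervals $J_n$, the bounded-distortion estimate for $(g^n)'$ on $J_0$ coming from the Lipschitz bound on $\log g'$ (this is the unique use of $C^2$), and the contradiction between a uniform lower bound on $|f(z_n)-z_n|/|J_n|$ and the continuity of $f'$ at the endpoint. Your observation that only the distortion estimate needs $C^2$, while all other steps use only $C^1$, is also correct, and is precisely why the $C^1$ counterexample preceding the theorem in the paper is possible.
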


\begin{corollary}
In $\Diff^2(I,\partial I)$ no nontrivial element is conjugate to its inverse.
\end{corollary}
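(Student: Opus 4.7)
The plan is to argue by contradiction: suppose some nontrivial $f\in\Diff^{2}(I,\partial I)$ satisfies $gfg^{-1}=f^{-1}$ for some $g\in\Diff^{2}(I,\partial I)$, and derive a contradiction using Theorem~\ref{theorem:Kopell_lemma}. The central trick is that while $g$ only conjugates $f$ to its inverse, the element $h:=g^{2}$ commutes with $f$ outright, so Kopell's Lemma will be applied to the pair $(f,h)$ rather than to $(f,g)$.

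I first dispose of the case that $f$ has no interior fixed point. There $f(x)>x$ (WLOG) on $(-1,1)$, and applying the orientation-preserving $g$ to this inequality gives $g(f(x))>g(x)$, i.e.\ $f^{-1}(g(x))>g(x)$, contradicting $f(y)>y$ at $y=g(x)$. Otherwise, fix a component $J=(a,b)$ of $I\setminus\fix(f)$ on which (WLOG) $f$ pushes right. The same order argument, applied to $x\in J$ and $g(x)\in g(J)$, shows that $f$ pushes left on $g(J)$, hence $g(J)\ne J$ for every such gap.

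The crux is the inclusion $\fix(g)\subseteq\fix(f)$. First, $g$ must have an interior fixed point: otherwise $g(x)>x$ uniformly on $(-1,1)$, hence the same holds for $h=g^{2}$, and Kopell's Lemma applied to the commuting pair $(f,h)$ would force $f=\Id$. Next, any interior fixed point $q$ of $g$ lies in $\fix(f)$: if $q$ lay in some gap $J^{*}$ of $f$, then from $g(q)=q$ and $gf=f^{-1}g$ one obtains $f^{-1}(q)=g(f(q))\in g(J^{*})\cap J^{*}$, and disjointness of distinct gaps forces $g(J^{*})=J^{*}$, contradicting the previous paragraph.

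The contradiction now assembles itself. Pick any gap $J=(a,b)$ of $f$ (which exists since $f\ne\Id$), and let $(c,d)$ be the component of $I\setminus\fix(g)$ containing it; the inclusion $\fix(g)\subseteq\fix(f)$ puts $c,d\in\fix(f)$, so $f|_{[c,d]}$ and $h|_{[c,d]}$ lie in $\Diff^{2}([c,d],\{c,d\})$, they commute, and $h|_{[c,d]}$ has no interior fixed point. Kopell's Lemma then forces either $f|_{[c,d]}=\Id$---impossible, since $f$ moves points of $J\subseteq(c,d)$---or $\fix(f)\cap(c,d)=\emptyset$, so $(c,d)$ is a single gap of $f$ and hence $(c,d)=J$. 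But then $a,b\in\fix(g)$, and orientation-preservation yields $g(J)=J$, contradicting the second paragraph. The main obstacle is that $f$ and $g$ do not commute, so Kopell's Lemma is not directly available; the work is in routing everything through $h=g^{2}$ and pinning down $\fix(g)$ relative to $\fix(f)$ before applying Kopell on gap-sized subintervals.
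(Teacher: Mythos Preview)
Your proof is correct and shares the paper's key idea: since $g$ only conjugates $f$ to $f^{-1}$, one passes to $h=g^{2}$, which genuinely commutes with $f$, and then invokes Kopell's Lemma. The paper's argument is a terse two lines (and appears to contain a slip, writing ``$g$ and $f^{2}$'' where ``$g^{2}$ and $f$'' is surely intended); you supply the details the paper omits---namely the inclusion $\fix(g)\subseteq\fix(f)$ and the localization to a gap $[c,d]$ of $g$ on which $h$ is actually fixed-point free---so that Kopell's hypotheses are honestly verified rather than assumed.
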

\begin{proof}
If $f$ is nontrivial and conjugate to its inverse, there are some subintervals where
it acts as a positive translation, and some where it acts as a negative translation,
and some point between the two is fixed. Let $g$ conjugate $f$ to $f^{-1}$.
Applying Kopell's lemma to $g$ and $f^2$ we see that $f^2=\Id$ so $f=\Id$.
\end{proof}

In particular, the Klein bottle group does not embed in $\Diff^2(I,\partial I)$.

\begin{remark}\label{remark:biorderable}
We suspect that the group $\Diff^p(I,\partial I)$ is
not bi-orderable for any finite $p$, but have not been able to show this. 
In this remark we give a detailed construction of a $C^\infty$ group action containing two
specific elements which are $C^1$ conjugate; if the construction could be done in such a way that the
elements are $C^p$ conjugate, it would prove that $\Diff^p(I,\partial I)$ is not bi-orderable,
but we have not been able to show this.

We build a $C^p$ diffeomorphism $f$ of $I$ as follows (for simplicity of formulae, we take
here $I$ to be the interval $[0,1]$ instead of $[-1,1]$ as throughout the rest of the paper). 
For $i<0$ let $p_i = 2^i$ and
for $i\ge 0$ let $p_i = 1-2^{-i-2}$. We build $f$ fixing each $p_i$, and acting alternately
as a positive and negative translation on complementary regions. We will insist that $f$ is
infinitely tangent to the identity at each $p_i$. For concreteness, on each
of the intervals $[7/8,15/16]$ and $[1/16,1/8]$ we let $f$ be the time $1$ flow of a
$C^\infty$ vector field with no zeros in the interior of the intervals, and infinitely tangent
to zero at the endpoints. The definition of $f$ on the rest of $I$ will be defined implicitly
in what follows.

Now, we let $g$ be a diffeomorphism which takes $p_i$ to $p_{i+1}$
for $i\ge 0$ and takes $p_i$ to $p_{i-1}$ for $i<0$, and let $g$ act as a dilation
(i.e.\/ with locally constant nonzero derivative) on open
neighborhoods of $0$ and $1$. Evidently we can choose such a $g$ to be $C^{\infty}$.
Finally, we arrange the dynamics of $f$ on the complementary intervals
so that $g$ conjugates $f$ to $f^{-k}$ outside the interval $[1/4,7/8]$ for some big $k$,
depending on $p$ (which we will specify shortly). Since $f$ (on the intervals $[7/8,15/16]$
and $[1/16,1/8]$) is the time 1 flow of a vector field, taking an $n$th root multiplies each
$C^p$ norm by $O(1/n)$ when $n$ is big. On the other hand, conjugating a diffeomorphism by
dilation by $1/\lambda$ blows up the $C^p$ norm by a factor of $\lambda^{p-1}$,
so providing $k>2^{p-1}$ the diffeomorphism $f$ we build with this property will be
$C^\infty$ on $(0,1)$, and $C^p$ tangent to the identity at $0$ and $1$, and therefore is an element of
$\Diff^p(I,\partial I)$. By construction, the product $h:=f^kgfg^{-1}$ is supported on
the interval $[1/4,7/8]$ which is subdivided into 3 intervals $J_0,J_1,J_2$ where
$h$ acts alternately as positive, negative, positive translation. If we are careful in our
choice of $g$, we may ensure that $h$ is infinitely tangent to the identity at the endpoints
of the $J_i$.

By a similar 
construction, we can choose a slightly different element $j$, whose germs at $0$ and $1$
agree with $g$, so that $j$ conjugates $f$ to $f^{-k}$ outside the interval $[1/8,3/4]$,
and then we can arrange that $h':=f^kjfj^{-1}$ is supported on the interval $[1/8,3/4]$
and acts alternately on a subdivision $J_0',J_1',J_2'$ as negative, positive, negative
translation, and infinitely tangent to the identity at the endpoints.

Now, suppose for a suitable choice of $g$ and $j$ as above, we could find some
$e$ which conjugates $h'$ to $h^{-1}$, so that we get
a relation of the form
$$e(f^kjfj^{-1})e^{-1}f^kgfg^{-1} = \Id$$
But this word is a product of conjugates of $f$, and therefore a relation of this
kind would certify that $\Diff^p(I,\partial I)$ is not 
bi-orderable. We suspect that such an $e$ can be found (for some $g$ and $j$), but leave this
as an open problem.
\end{remark}

In view of the construction outlined in
Remark~\ref{remark:biorderable} it is a bit surprising that the
group $\Diff^\omega(I,\partial I)$ {\em is} bi-orderable:

\begin{proposition}\label{prop:C_omega_biorderable}
The group $\Diff^\omega(I,\partial I)$ is bi-orderable.
\end{proposition}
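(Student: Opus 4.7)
The plan is to embed $\Diff^\omega(I,\partial I)$ into the group $\hat G$ of formal power series $f(x) = a_1 x + a_2 x^2 + \cdots$ with $a_1 > 0$, under composition of power series, and then to bi-order $\hat G$. After translating so that the left endpoint $-1 \in I$ lies at the origin, the Taylor expansion at $0$ defines a group homomorphism $T : \Diff^\omega(I,\partial I) \to \hat G$. It is injective: if $T(f) = T(g)$, then $f$ and $g$ agree on a neighborhood of $-1$ (the common domain of convergence of their Taylor series, by real-analyticity), hence on all of $I$ by analytic continuation on the connected interval.

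To bi-order $\hat G$, declare the positive cone $P$ to consist of the nonidentity series $f(x) = a_1 x + a_2 x^2 + \cdots$ such that either $a_1 > 1$, or $a_1 = 1$ and the least $k \ge 2$ with $a_k \ne 0$ satisfies $a_k > 0$. Then $\hat G - \{\Id\} = P \sqcup P^{-1}$. Closure under composition follows from multiplicativity of the linear coefficient under composition, together with the fact that for $f = \Id + a_k x^k + \cdots$ and $g = \Id + b_l x^l + \cdots$ both tangent to the identity with $a_k, b_l > 0$, the leading nontrivial term of $g \circ f - \Id$ is $a_k x^k$, $b_l x^l$, or $(a_k + b_l) x^k$ according as $k < l$, $k > l$, or $k = l$, and is positive in every case.

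The step that requires care is conjugation invariance of $P$. The linear part satisfies $(g f g^{-1})'(0) = f'(0)$, so the criterion $a_1 > 1$ is preserved. For the tangent-to-identity case, writing $g f g^{-1}(x) = g\bigl(g^{-1}(x) + a_k (g^{-1}(x))^k + O(x^{k+1})\bigr)$ and applying Taylor's theorem to $g$ around $g^{-1}(x)$, one finds that if $f(x) = x + a_k x^k + O(x^{k+1})$ and $c_1 := g'(0) > 0$, then
\[
g f g^{-1}(x) = x + a_k c_1^{1-k} x^k + O(x^{k+1}),
\]
whose leading nontrivial coefficient has the same sign as $a_k$ since $c_1^{1-k} > 0$. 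Thus $P$ is conjugation-invariant, and pulling back $P$ through $T$ produces the desired bi-ordering of $\Diff^\omega(I,\partial I)$. The only genuinely nontrivial step is this conjugation calculation, which is elementary but requires careful tracking of leading terms (in particular, one must verify that the garbage terms of order less than $k$ cancel, as they must since they agree with those in $g \circ g^{-1}(x) = x$).
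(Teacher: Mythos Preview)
Your proof is correct and takes essentially the same approach as the paper: both define the positive cone via the leading nontrivial term of the Taylor expansion at an endpoint, and both rely on real-analyticity to guarantee that a nonidentity element has a well-defined leading term (equivalently, that the Taylor map is injective). You package this as an embedding into the formal power series group and supply more detail on the closure and conjugation-invariance checks, while the paper states the cone directly and appeals to the geometric reformulation ``$f(t)>t$ for all sufficiently small $t>0$'' to make these verifications transparent; but the underlying order is identical.
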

\begin{proof}
Define a bi-ordering on $\Diff^\omega(I,\partial I)$ as follows. If the derivative $f'(0)$
is $>1$, put the element $f$ in the positive cone. If $f'(0)=1$,
look at the Taylor expansion at $0$. This is of the form
$$f(t) = t + a_2t^2 + a_3t^3 + \cdots$$
Since $f$ is real-analytic, either $f=\Id$, or else there is some first index $i$
such that $a_i \ne 0$. Then put $f$ in the positive cone if $a_i>0$.

One checks that this really does define a cone, and that for every nontrivial element
$f$, either $f$ is in the cone, or $f^{-1}$ is in the cone. Finally, the cone is
conjugation invariant.

More geometrically, $f$ is in the positive cone if $f(t)>t$ for all {\em sufficiently small}
$t>0$. That this is conjugation invariant and a cone is obvious. That it is well-defined
follows from the fact that a nontrivial real analytic diffeomorphism of $I$ can't have
infinitely many fixed points.
\end{proof}

\section{Groups fixing codimension 2 submanifolds}\label{section:circular}

In this section we establish analogs of Theorem~\ref{theorem:pillow} and
Theorem~\ref{theorem:cilo} for groups acting in a PL or smooth manner on
manifolds, fixing a submanifold of codimension 2 pointwise. The conclusion is
not that such groups are left orderable (they are not in general) but that they
are {\em circularly orderable}. 

\subsection{Circularly ordered groups}

We recall standard definitions and facts about circular orders; \cite{Calegari_foliations},
Chapter~2 is a reference.

\begin{definition}\label{definition:circular_order}
Let $\Sigma$ be a set and let $\Sigma^{(3)}$ denote the space of {\em distinct}.
ordered triples in $\Sigma$. A {\em circular order} on $\Sigma$ is a map 
$e:\Sigma^{(3)} \to \pm 1$ satisfying the {\em cocycle condition}
$$e(s_1,s_2,s_3) - e(s_0,s_2,s_3) + e(s_0,s_1,s_3) - e(s_0,s_1,s_2) = 0$$
for all distinct quadruples $s_0,s_1,s_2,s_3$ in $\Sigma$.

A (left) {\em circular order} on a group $G$ is a circular order on $G$ which
is invariant under left-multiplication; i.e.\/ it satisfies
$$e(gg_1,gg_2,gg_3) = e(g_1,g_2,g_3)$$
for all $g$ in $G$ and all distinct triples $(g_1,g_2,g_3) \in G^{(3)}$.
\end{definition}

\begin{remark}
Note that $e$ may be extended to all triples $\Sigma^3$ by defining $e$ to be
$0$ on $\Sigma^3 - \Sigma^{(3)}$; this extension also satisfies the cocycle
condition.
\end{remark}

\begin{notation}
A group which admits an invariant circular order is said to be 
{\em circularly orderable}, which we usually abbreviate by CO.
\end{notation}

\begin{lemma}\label{lemma:local_CO_is_CO}
Let $G$ be a group. Suppose every finitely generated subgroup of $G$ is CO.
Then $G$ is CO.
\end{lemma}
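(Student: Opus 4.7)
The plan is to adapt the standard Tychonoff compactness argument used to prove the analogous statement for left-orderable groups (Lemma~\ref{lemma:LLO}). Since a circular order is encoded as a function $e:G^3 \to \{-1,0,+1\}$ (vanishing on triples with a repeat, nonzero elsewhere) satisfying the cocycle condition together with left-invariance, the axioms are all \emph{local} in the sense that each involves only finitely many values of $e$ at a time. This is exactly the setup in which compactness produces a global object from a coherent family of local ones.

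First, I would work in the compact space $X := \{-1,0,+1\}^{G^3}$, with the product topology, compact by Tychonoff. For each finite subset $F \subset G$, let $H_F := \langle F \rangle$ and define
\[
X_F := \{\, e \in X : e|_{H_F^3} \text{ is a left-invariant circular order on } H_F\,\},
\]
where we require $e$ to vanish on triples in $H_F^3$ with a repeat and to take values in $\{\pm 1\}$ on distinct triples in $H_F^3$. Each defining condition (the cocycle identity on a quadruple, a single instance of left-invariance, vanishing on a specific triple with a repeat, nonvanishing on a specific distinct triple) depends on only finitely many coordinates of $e$, so each condition cuts out a closed subset of $X$; the intersection of these closed conditions, which is $X_F$, is therefore closed.

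Next I would verify the finite intersection property for the family $\{X_F\}$. Given finite subsets $F_1,\ldots,F_n$, let $H := \langle F_1 \cup \cdots \cup F_n\rangle$, which is finitely generated; by hypothesis $H$ admits a left-invariant circular order $e_H$. Extend $e_H$ to $G^3$ by setting it equal to $0$ outside $H^3$. Because $H_{F_i} \subseteq H$ for each $i$, the restriction of $e_H$ to $H_{F_i}^3$ is automatically a left-invariant circular order on $H_{F_i}$ (the axioms are inherited verbatim), so the extended $e$ lies in $\bigcap_i X_{F_i}$. Hence the family $\{X_F\}$ has FIP and by compactness $\bigcap_{F} X_F$ is nonempty.

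Finally I would argue that any $e$ in this intersection is a left-invariant circular order on $G$. Given any specific cocycle relation involving $g_0,g_1,g_2,g_3 \in G$, or a left-invariance identity involving $g,g_1,g_2,g_3$, take $F$ to be the finite set of group elements mentioned (including products like $gg_i$); then all the relevant triples lie in $H_F^3$, on which $e$ already satisfies the axioms by virtue of $e \in X_F$. The nonvanishing of $e$ on distinct triples of $G$ is verified similarly. There is no serious obstacle in this argument; the only point requiring a moment's thought is that circular orders restrict to subgroups, which is immediate from the definition. This makes the proof a clean transcription of the LO case to the CO setting.
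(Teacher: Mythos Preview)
Your compactness argument is correct and is precisely the standard proof of this fact. Note that the paper does not actually supply a proof of this lemma: it is listed among the basic facts about circular orders in \S\ref{section:circular} with a blanket reference to \cite{Calegari_foliations}, Chapter~2, so there is no ``paper's own proof'' to compare against. Your Tychonoff argument is exactly what one finds in that reference (and is the direct analogue of the proof of Lemma~\ref{lemma:LLO}), so nothing further is needed.
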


\begin{lemma}\label{lemma:extension_is_CO}
If there is a short exact sequence $0 \to K \to G \to H \to 0$ where $H$ is
circularly orderable and $K$ is left orderable, then $G$ is circularly orderable.
\end{lemma}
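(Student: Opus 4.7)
The plan is to construct an explicit circular order on $G$ by combining the given circular order $e_H$ on $H$ with the given left order $\prec$ on $K$. Let $\pi:G\to H$ denote the quotient and $P_K$ the positive cone of the left order on $K$. The key observation is that on every left coset $gK$ the recipe $a\prec b\iff a^{-1}b\in P_K$ defines a total order, and this family of coset orders is automatically left-$G$-invariant, since $(ha)^{-1}(hb)=a^{-1}b$ for every $h\in G$ and $a,b$ in a common coset.

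Given a triple $(g_1,g_2,g_3)$ of distinct elements of $G$, I define $e_G(g_1,g_2,g_3)$ by cases according to the pattern of coincidences among $\pi(g_1),\pi(g_2),\pi(g_3)$. If the three images are distinct, set $e_G(g_1,g_2,g_3):=e_H(\pi(g_1),\pi(g_2),\pi(g_3))$. If all three coincide, take the cyclic order on the triple induced by the (left) coset order. If exactly two coincide, picture the circle supporting $H$ with the repeated point ``resolved'' into two adjacent points in the cyclic order on $H$, the one of smaller coset-order placed immediately before the larger; set $e_G(g_1,g_2,g_3)$ equal to $e_H$ applied to this resolved triple. Equivalently, writing $\pi(g_i)=\pi(g_j)$ with $i<j$ and $k$ the remaining index, this equals the sign of the permutation $(1,2,3)\mapsto(i,j,k)$ times $+1$ if $g_i\prec g_j$ and $-1$ otherwise.

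Left-invariance of $e_G$ is immediate in every case. In the first case it reduces to the left-invariance of $e_H$ together with the fact that $\pi$ is a homomorphism; in the other two cases it reduces to the left-$G$-invariance of each coset order already observed, which is why the ``resolution'' convention survives left translation.

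The main obstacle is verifying the cocycle condition for $e_G$ on an arbitrary quadruple $(g_0,g_1,g_2,g_3)$ of distinct elements. I would handle this by case analysis on the partition of $\{0,1,2,3\}$ induced by the fibers of $\pi$: the possibilities are a $1{+}1{+}1{+}1$ split (which collapses to the cocycle for $e_H$), a single fiber of size $4$ (which collapses to the standard fact that any total order on a set induces a circular order satisfying the cocycle), and the mixed splits $3{+}1$, $2{+}2$, and $2{+}1{+}1$. In each mixed case the four signed terms in the cocycle either cancel in pairs, via the identity $(ha)^{-1}(hb)=a^{-1}b$ within a common coset, or reduce to an instance of the $e_H$-cocycle. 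The $2{+}1{+}1$ split is the most delicate, as it is the one that forces the precise form of the resolution convention in Case~2: the convention is engineered so that the two terms of the cocycle involving the doubly-covered fiber contribute a coset-order comparison that cancels, leaving behind exactly the $e_H$-cocycle on the three distinct-image cosets.
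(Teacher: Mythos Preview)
The paper does not give its own proof of this lemma; it is stated in \S\ref{section:circular} as one of several standard facts about circular orders, with a blanket reference to \cite{Calegari_foliations}, Chapter~2, for proofs. So there is no paper-side argument to compare against beyond the implicit pointer to the literature.

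Your construction is the standard lexicographic one and is correct. The definition of $e_G$ in each of the three cases is exactly right, and in particular your ``resolution'' convention in the two-coincidence case is the canonical choice that makes the cocycle go through. Your sketch of the cocycle verification is accurate: the $1{+}1{+}1{+}1$ and $4$ cases reduce to the cocycle for $e_H$ and for a total order respectively, and in the mixed cases the terms pair off as you describe. For instance, in the $2{+}1{+}1$ case with $\pi(g_0)=\pi(g_1)$ and $\pi(g_2),\pi(g_3)$ distinct, the two ``distinct-image'' terms are equal and appear with opposite sign, while the two ``resolution'' terms both equal $\pm 1$ according to whether $g_0\prec g_1$, again with opposite sign; the other subcases of $2{+}1{+}1$ (where the coincident pair does not occupy adjacent positions in the quadruple) work the same way once one tracks the permutation sign in your formula. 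The $3{+}1$ case is the one place where the cocycle identity for the total-order-induced circular order on a single coset is genuinely used, so it is worth writing that instance out explicitly rather than leaving it implicit.

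The only thing I would tighten is the phrasing ``I would handle this by case analysis'': since the verification is short and entirely mechanical, actually carrying it out (or at least the $3{+}1$ and $2{+}1{+}1$ cases) would make the write-up self-contained rather than a sketch.
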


In particular, left orderable groups are circularly orderable.

\begin{lemma}\label{lemma:action_is_CO}
A countable group is circularly orderable if and only if it is isomorphic to
a subgroup of $\Homeo_+(S^1)$. Conversely, {\em every} subgroup of $\Homeo_+(S^1)$
is circularly orderable.
\end{lemma}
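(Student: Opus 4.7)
The plan is to establish both implications of this Ghys-style correspondence between circular orders and actions on $S^1$.

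For the converse direction (every subgroup of $\Homeo_+(S^1)$ is circularly orderable), observe that a left-invariant circular order restricts to any subgroup, so it suffices to produce one on $\Homeo_+(S^1)$ itself. I would pass to the universal cover: let $\widetilde{\Homeo}_+(S^1)$ denote the subgroup of $\Homeo_+(\mathbb{R})$ consisting of those $f$ satisfying $f(x+1) = f(x) + 1$; it sits in a central extension
\[ 0 \to \mathbb{Z} \to \widetilde{\Homeo}_+(S^1) \to \Homeo_+(S^1) \to 0 \]
with central $\mathbb{Z}$ generated by translation by $1$. Being a subgroup of $\Homeo_+(\mathbb{R})$, which acts faithfully and orientation-preservingly on $\mathbb{R}$, this group is left orderable (fix a dense sequence $(x_n)$ in $\mathbb{R}$ and declare $f \prec g$ whenever $f(x_n) < g(x_n)$ at the smallest $n$ where they disagree), and in this particular order the central translation is cofinal. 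From a left order on a central extension of $G$ by $\mathbb{Z}$ in which $\mathbb{Z}$ is cofinal, one extracts a left-invariant circular order on $G$ by the standard recipe: given three distinct cosets, pick lifts and translate them by integers so that they land consecutively in a single fundamental domain of $\mathbb{Z}$, then read off the resulting cyclic permutation class.

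For the forward direction (every countable $G$ with circular order $e$ embeds in $\Homeo_+(S^1)$), I would enumerate $G = \{g_0, g_1, \ldots\}$ and build an injection $\phi : G \to S^1$ by induction, preserving the circular order. Place $\phi(g_0)$ arbitrarily; at stage $n$, the already-placed points $\phi(g_0), \ldots, \phi(g_{n-1})$ cut $S^1$ into open arcs, and the values $e(g_i, g_n, g_j)$ for $i, j < n$ (a consistent datum, by the cocycle identity) single out a unique open arc into which $\phi(g_n)$ must be placed. Then define the action of $G$ on $\phi(G)$ by $g \cdot \phi(h) := \phi(gh)$; left-invariance of $e$ makes this action preserve the inherited circular order on $\phi(G)$, so it extends continuously to an orientation-preserving action on the closure $\overline{\phi(G)} \subset S^1$. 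Finally, extend equivariantly across each complementary open arc by any orientation-preserving homeomorphism onto its image arc. Faithfulness is immediate: if $g \neq \Id$ then $g \cdot \phi(\Id) = \phi(g) \neq \phi(\Id)$.

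The main obstacle in both directions is verifying that the cocycle identity is respected. In the embedding direction, one must check that at each inductive step the arc designated for $\phi(g_n)$ is genuinely nonempty and uniquely determined by the prescribed relations $e(g_i, g_n, g_j)$; this reduces to the four-term cocycle identity on the already-placed points together with $g_n$. In the converse direction, one must check that the sign function extracted from the lifted left order satisfies the cocycle identity, which follows from transitivity of the left order together with the cofinality of the central $\mathbb{Z}$ (without cofinality one obtains only a partial circular order on the quotient).
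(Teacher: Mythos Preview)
The paper does not actually prove this lemma: it is stated in \S\ref{section:circular} as one of several ``standard definitions and facts about circular orders'' with a blanket reference to \cite{Calegari_foliations}, Chapter~2. So there is no proof in the paper to compare against; your sketch is essentially the standard argument one finds in that reference (and in Ghys's work), and it is correct in outline.

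A couple of minor points worth tightening. In the converse direction, your claim that the central translation is cofinal in the particular left order you describe is correct, but deserves a one-line justification: for any lift $f$, since $f(x_0)$ is finite there is $n$ with $x_0 + n > f(x_0)$, whence $T^n \succ f$ at the very first test point. In the forward direction, the phrase ``extend equivariantly across each complementary open arc by any orientation-preserving homeomorphism'' hides a small bookkeeping step: one must choose the extensions consistently across a $G$-orbit of gaps (pick one gap per orbit, identify it with a standard interval, and let the group action transport that identification), or else the extensions of different group elements need not compose correctly. With those two sentences added, the argument is complete.
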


\begin{lemma}\label{lemma:euler_is_obstruction}
Suppose $G$ is circularly orderable. The (extended) function $e$ is a $\Z$-valued
2-cocycle on $G$ and thereby determines a class $[e] \in H^2(G;\Z)$. If the
class $[e]=0$ then $G$ is left orderable.
\end{lemma}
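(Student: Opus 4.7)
The first assertion is essentially tautological: the cocycle condition in Definition~\ref{definition:circular_order} is precisely the condition that $e$, viewed as a homogeneous 2-cochain on $G$ with values in the trivial $G$-module $\Z$, be a cocycle, and left-invariance ensures it represents a class $[e] \in H^2(G;\Z)$. The content is the second assertion, and the plan is to realize $G$ as a subgroup of $\Homeo_+(\R)$ when $[e]=0$.

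By Lemma~\ref{lemma:LLO} it suffices to treat the case in which $G$ is finitely generated, hence countable. Moreover, if $\beta\colon G \to \Z$ is a $1$-cochain with $\delta\beta = e$, then $\delta(\beta|_H) = e|_H$ for any subgroup $H$, so the hypothesis $[e]=0$ descends to every finitely generated subgroup. Assuming $G$ countable, Lemma~\ref{lemma:action_is_CO} yields an embedding $\rho\colon G \hookrightarrow \Homeo_+(S^1)$. Let
$$\widetilde{\Homeo_+(S^1)} := \{\tilde f \in \Homeo_+(\R) \;:\; \tilde f(x+1)=\tilde f(x)+1\}$$
be the universal covering group, sitting in a central extension $0 \to \Z \to \widetilde{\Homeo_+(S^1)} \to \Homeo_+(S^1) \to 0$. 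Pulling back along $\rho$ gives a central extension $0 \to \Z \to \tilde G \to G \to 0$.

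The main technical step is to identify the cohomology class of this pulled-back extension with $[e]$. This can be done directly: pick a basepoint $*\in S^1$, pick a section $\sigma\colon S^1\setminus\{*\} \to \R$ of the covering projection $p\colon \R \to S^1$, and compute the resulting inhomogeneous $2$-cocycle of the extension. The comparison is bookkeeping with the combinatorial definition of $e$ in terms of the cyclic order of triples $(\rho(g_1)\cdot *, \rho(g_2)\cdot *, \rho(g_3)\cdot *)$ on $S^1$, together with the fact that the homogeneous cocycle $e$ and its inhomogeneous cousin are cohomologous.

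Granting this identification, if $[e]=0$ the extension splits via a homomorphic section $s\colon G \to \tilde G \subset \Homeo_+(\R)$; since $\rho$ is injective, so is $s$. Finally, $\Homeo_+(\R)$ is itself left orderable: fix a countable dense set $\{d_i\}_{i \in \mathbb{N}} \subset \R$ and declare $f \succ g$ if $f(d_j)=g(d_j)$ for $j$ strictly less than the minimal index $i$ with $f(d_i)>g(d_i)$. Continuity and density of $\{d_i\}$ make this a total order on $\Homeo_+(\R)$, and orientation-preservation makes it left-invariant. Restricting to $s(G) \cong G$ yields the desired left order on $G$. The main obstacle is the Euler-class identification; everything else is standard.
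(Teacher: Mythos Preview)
The paper does not supply a proof of this lemma: it is one of several standard facts about circular orders collected at the start of \S~\ref{section:circular} and attributed to \cite{Calegari_foliations}, Chapter~2. So there is nothing in the paper to compare your argument against directly.

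On its own merits your argument is correct. The reduction to countable $G$ via Lemma~\ref{lemma:LLO}, the pullback of the universal-cover extension of $\Homeo_+(S^1)$, and the lexicographic left order on $\Homeo_+(\R)$ along a countable dense sequence are all sound. The step you yourself flag as the ``main obstacle'' --- identifying the extension class of the pullback with $[e]$ --- is the genuine content, and you only sketch it. It is routine once one remembers that the dynamical realization in Lemma~\ref{lemma:action_is_CO} is built precisely so that the orbit of the basepoint inherits the cyclic order $e$; the section-cocycle of the cover then recovers (a normalization of) $e$. But a reader unfamiliar with that construction would not be able to reconstruct the identification from your paragraph.

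For comparison, the treatment in the cited reference avoids the dynamical realization altogether: one builds the $\Z$-central extension $\tilde G$ directly from the $\{0,1\}$-valued inhomogeneous cocycle associated to $e$, puts a left order on $\tilde G$ by hand using the circular order on $G$ together with the integer coordinate, and then observes that a splitting (which exists exactly when $[e]=0$) embeds $G$ in the left-ordered group $\tilde G$. That route makes the Euler-class identification tautological rather than a computation, at the cost of a slightly more combinatorial definition of the order on $\tilde G$. Your geometric route has the advantage that the left-orderability of $\tilde G \subset \Homeo_+(\R)$ is immediate.
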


\begin{example}\label{example:GL_2_CO}
The group $\GL^+(2,\R)$ is an extension $0 \to \R^+ \to \GL^+(2,\R) \to \SL(2,\R) \to 0$.
The group $\SL(2,\R)$ acts faithfully and in an orientation-preserving way
on the circle of linear rays through the origin, and is therefore circularly
orderable, by Lemma~\ref{lemma:action_is_CO}. By Lemma~\ref{lemma:extension_is_CO}
the group $\GL^+(2,\R)$ is CO.
\end{example}

\subsection{Knots}

We would like to extend the results from \S~\ref{PL_LO_subsection} 
and \S~\ref{C1_LO_subsection} to pairs $(M,K)$ where the codimension 
of $K$ is 2. Motivated by the interesting example where $M=S^3$ and $K$ is a circle, 
we use the informal term ``knot'' for $K$ although we do not assume either that
$K$ is connected, or that it is homeomorphic to a sphere.

\begin{theorem}[$C^1$ Knots CO]\label{theorem:ci_ropes_co}
Let $M$ be an $n$ dimensional connected smooth orientable manifold, and let $K$ be a
nonempty $n-2$ dimensional closed submanifold. Then the group $\Diff^1_+(M,K)$
is circularly orderable.
\end{theorem}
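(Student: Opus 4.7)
The plan is to mimic the proof of Theorem~\ref{theorem:cilo} but produce a circular ordering rather than a left ordering, using the extension lemma for CO (Lemma~\ref{lemma:extension_is_CO}) to glue the pieces. By Lemma~\ref{lemma:local_CO_is_CO} it is enough to prove every finitely generated subgroup $G \subseteq \Diff^1_+(M,K)$ is CO, so fix such a nontrivial $G$. Let $X \subseteq M$ be the set of points at which the fixed rank of $G$ is at least $n-2$; by Lemma~\ref{lemma:fixed_rank_semicontinuous} this set is closed, and it contains $K$, so is nonempty. Since $G$ is nontrivial and $M$ is connected, $\fro(X)$ is nonempty. Pick $p \in \fro(X)$ and an $(n-2)$-plane $\pi \subseteq T_pM$ fixed pointwise by every $dg|_p$.

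Linearize. The map $\rho \colon G \to \GL(n,\R)$, $g \mapsto dg|_p$, lands in the pointwise stabilizer of $\pi$. By Lemma~\ref{lemma:linear_stabilizer} (with $m = n-2$), together with orientation-preservation, this stabilizer equals $\R^{2(n-2)} \rtimes \GL^+(2,\R)$. The target group is CO: $\GL^+(2,\R)$ is CO by Example~\ref{example:GL_2_CO}, $\R^{2(n-2)}$ is LO (abelian), and Lemma~\ref{lemma:extension_is_CO} combines the two. Thus $\rho(G)$ is CO, and the short exact sequence $1 \to N \to G \to \rho(G) \to 1$, with $N := \ker\rho$, reduces the theorem (via Lemma~\ref{lemma:extension_is_CO}) to showing that $N$ is LO.

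For $N$, decompose further as $1 \to N' \to N \to N_{\mathrm{gm}} \to 1$, where $N_{\mathrm{gm}}$ is the image of $N$ in the group of germs of $C^1$ diffeomorphisms at $p$, and $N' \trianglelefteq N$ is the subgroup of elements trivial on some open neighborhood of $p$ (normality holds because conjugation by an element fixing $p$ carries a neighborhood of $p$ to another one). The elements of $N$ have trivial linear part at $p$, so $N_{\mathrm{gm}}$ lies inside the germ-kernel of the linear part, which is LI by the Thurston stability theorem (Theorem~\ref{theorem:Thurston_stability_theorem}); in particular $N_{\mathrm{gm}}$ is LO. For $N'$, take a nested sequence of open balls $U_1 \supset U_2 \supset \cdots$ around $p$ with $\bigcap_i U_i = \{p\}$, and set $N'_i := \{g \in N' : g|_{U_i} = \Id\}$. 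Then $N' = \bigcup_i N'_i$ is a nested union, and each $N'_i$ injects into $\Diff^1_+(M \setminus U_i,\, \partial U_i)$, which by Theorem~\ref{theorem:cilo} is locally indicable (the manifold-with-boundary $M \setminus U_i$ is connected because $n \ge 2$, and $\partial U_i$ is a codimension-one submanifold). Hence each $N'_i$ is LO, and Lemma~\ref{lemma:LLO} promotes the nested union $N'$ to LO. One application of Lemma~\ref{lemma:ses} then makes $N$ LO, and a final application of Lemma~\ref{lemma:extension_is_CO} makes $G$ CO.

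The main obstacle I anticipate is purely technical rather than conceptual: when $p \in K$ one must choose the balls $U_i$ transverse enough to $K$ that $\partial U_i$ is a genuine $C^1$ codimension-one submanifold of the manifold-with-boundary $M \setminus U_i$, so that Theorem~\ref{theorem:cilo} applies as stated. This is routine transversality. Conceptually, the argument says that codimension-two behavior linearizes into $\GL^+(2,\R)$, which is circularly (but not linearly) orderable, while everything supported off a small neighborhood of $p$ reduces back to the codimension-one theorem.
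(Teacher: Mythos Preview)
Your proof is correct and follows essentially the same route as the paper's: linearize at a fixed point to obtain a homomorphism to the CO group $\R^{2(n-2)}\rtimes\GL^+(2,\R)$, then show the kernel is LO by combining Thurston stability (for the part with nontrivial germ) with the codimension-one result (for the part with trivial germ). The paper is slightly more direct in two places. First, it simply picks a point $k\in K$ and defines $\rho$ on all of $\Diff^1_+(M,K)$ at once; you never actually use the frontier property of your point $p$, so the detour through $\fro(X)$ is unnecessary. Second, rather than your explicit exhaustion $N'=\bigcup_i N'_i$ and appeal to Theorem~\ref{theorem:cilo} on each $M\setminus U_i$, the paper shows the kernel is locally indicable by a direct dichotomy on finitely generated subgroups: if the germ at $k$ is nontrivial, Thurston stability gives the surjection to $\Z$; if the germ is trivial, then $\fix$ has nonempty interior and the argument of Theorem~\ref{theorem:cilo} applies verbatim. (Incidentally, your transversality worry about $\partial U_i$ meeting $K$ is moot: you only need $\partial U_i$ to be a nonempty codimension-one submanifold fixed by $N'_i$, which it is regardless of its position relative to $K$.)
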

\begin{proof}
Fix some point $k \in K$. The linear representation $\rho:\Diff^1_+(M,K) \to \GL(n,\R)$
defined by $\rho(g) = dg|_k$ fixes a subspace $\pi$ of dimension $n-2$, and
therefore by Lemma~\ref{lemma:linear_stabilizer} 
has image in $\R^{2(n-2)}\rtimes \GL^+(2,\R)$. The image group
is CO by Lemma~\ref{lemma:extension_is_CO}, since
$\GL^+(2,\R)$ is CO (by Example~\ref{example:GL_2_CO}) and $\R^{2(n-2)}$
is locally indicable and therefore LO. Let $K$ denote the kernel, and let
$G$ be a finitely generated subgroup of the kernel. If the germ of $G$ at $k$
is nontrivial, then $G$ surjects onto $\Z$, by the Thurston stability theorem
(i.e.\/ Theorem~\ref{theorem:Thurston_stability_theorem}).
If the germ of $G$ at $k$ is trivial, then each of the finite generators $g_i$
fixes an open neighborhood of $k$, and therefore $\fix(G)$ has nonempty interior.
It follows that the subset $X$ where the fixed rank of $G$ is $\ge n-1$ is
nonempty, and since $M$ is connected $\fro(X)$ is also nonempty.
Therefore as in the proof of Theorem~\ref{theorem:cilo} we deduce that $G$ 
surjects onto $\Z$. It follows that $K$ is locally indicable and therefore LO,
so $\Diff^1_+(M,K)$ is CO by Lemma~\ref{lemma:extension_is_CO}.
\end{proof}

Before proving the corresponding theorem for $\PL$ actions, we must analyze the
local structure of a group of $\PL$ homeomorphisms at an arbitrary point on an
$m$-dimensional fixed submanifold.

\begin{lemma}\label{lemma:PL_fixed_germ}
Let $G$ be a countable group of germs at $0$ of $\PL$ diffeomorphisms of $\R^n$ fixing
$\R^m$. There is a homomorphism from $G$ to the group of piecewise projective
automorphisms of $\RP^{n-m-1}$, and for every finitely generated subgroup $H$
of the kernel, either $H$ surjects to $\Z$ or $\fix(H)$ 
has nonempty interior arbitrarily close to $0$. In particular, the kernel is
locally indicable, and therefore LO.
\end{lemma}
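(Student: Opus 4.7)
The plan has three main steps: construct the homomorphism from the linear parts of PL germs at $0$, isolate a scalar homomorphism on the kernel, and close the argument by combining the resulting dichotomy with the proof strategy of Theorem~\ref{theorem:pillow}.

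For each $g \in G$, I first choose a PL representative near $0$ with a triangulation having $0$ as a vertex. On every simplex $\sigma$ with vertex $0$, the linear part $M_\sigma$ fixes $\R^m$ pointwise, and by Lemma~\ref{lemma:linear_stabilizer} has block form $\bigl(\begin{smallmatrix} I & V_\sigma \\ 0 & A_\sigma \end{smallmatrix}\bigr)$ with $A_\sigma$ acting on a transverse $(n-m)$-dimensional space. The simplices with vertex $0$ correspond via radial projection to cells of a PL decomposition of the transverse link $\RP^{n-m-1}$, and on each cell $g$ acts by the projective class $[A_\sigma] \in \mathrm{PGL}(n-m,\R)$; continuity of the $M_\sigma$ across shared codimension-one faces gives compatibility, and the assignment assembles into a piecewise projective homeomorphism of $\RP^{n-m-1}$. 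Composition of germs and common refinement of triangulations then extend this to a homomorphism from $G$ into the piecewise projective automorphisms of $\RP^{n-m-1}$.

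For an element of the kernel every $[A_\sigma]$ is trivial, so $A_\sigma = \lambda_\sigma I$. A continuity argument across adjacent simplices---using that an $(n-1)$-face shared by two $n$-simplices containing $0$ has span projecting nontrivially onto the transverse direction when $n-m \ge 2$---forces $\lambda_\sigma$ to be independent of $\sigma$, and defines a homomorphism $\lambda \colon \mathrm{Kernel} \to \R^*$. Given a finitely generated $H$ in the kernel, compose with $\log|\,\cdot\,|$; the image is a finitely generated, hence free abelian, subgroup of $(\R,+)$ and surjects onto $\Z$ whenever nontrivial, giving the first alternative. Otherwise $|\lambda| \equiv 1$ on $H$, and after passing to the positive-$\lambda$ subgroup (of index at most $2$), every element has pure shear form $\bigl(\begin{smallmatrix} I & V_\sigma \\ 0 & I \end{smallmatrix}\bigr)$ on every simplex. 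Taking a common refinement $\mathcal{T}$ of the generators' triangulations near $0$, I aim to show that either some top-dimensional simplex of $\mathcal{T}$ has $V_\sigma(g_i) = 0$ for all generators---so that $\sigma \subset \fix(H)$, giving, via the cone structure, nonempty interior of $\fix(H)$ arbitrarily close to $0$---or one extracts a $\Z$-surjection directly from the shear data on a simplex where some generator acts nontrivially.

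Finally, for the ``in particular'' that the kernel is locally indicable, I combine the dichotomy with the proof technique of Theorem~\ref{theorem:pillow}: whenever $\fix(H)$ has nonempty interior arbitrarily close to $0$, pick a point in a codimension-$1$ simplex in $\fro(\mathrm{int}(\fix(H)))$ at which $H$ acts semilinearly; the local representation lands in the stabilizer $\R^{n-1} \rtimes \R^+$ of the tangent plane, which is locally indicable by Example~\ref{example:codimension_1}, and is nontrivial by the frontier condition, so $H$ surjects to $\Z$; Burns-Hale (Theorem~\ref{theorem:Burns_Hale}) then yields local indicability. I expect the main obstacle to be the pure-shear step together with the sign issue: establishing the second alternative (or a direct $\Z$-surjection) when every simplex of the common refinement carries some generator with nontrivial shear requires careful combinatorial analysis of how shears of distinct generators combine, and handling the $\lambda = -1$ involutions in the $n-m$ even case (which are visible in the kernel of the naive projective action and not obviously compatible with local indicability) may require refining the target group from $\mathrm{PPL}(\RP^{n-m-1})$ to its oriented double cover.
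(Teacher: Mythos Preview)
Your construction of the homomorphism at the vertex $0$ does not work as stated. The simplices with vertex $0$ correspond via radial projection to cells of the link $S^{n-1}$, not of $\RP^{n-m-1}$; the natural map $S^{n-1}\setminus S^{m-1}\to\RP^{n-m-1}$ collapses an $m$-parameter family of directions onto each point, and distinct top simplices at $0$ lying over the same cell of $\RP^{n-m-1}$ can carry genuinely different blocks $A_\sigma$. Continuity of the $M_\sigma$ across shared codimension-one faces is a statement about adjacency in $S^{n-1}$ and does not force these $A_\sigma$ to agree after projection. There is a second problem hiding behind this one: for a simplex $\sigma$ at $0$ that meets $\R^m$ only in $\{0\}$, nothing in the hypothesis forces the linear part $M_\sigma$ to fix $\R^m$, so the block form $\bigl(\begin{smallmatrix} I & V_\sigma\\ 0 & A_\sigma\end{smallmatrix}\bigr)$ is not available for every $\sigma$.

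The paper's proof avoids both issues by exploiting the countability hypothesis, which you never use. For each $g\in G$ the points of $\R^m$ lying in the $(m-1)$-skeleton of a linear triangulation for $g$ form a closed nowhere-dense set; since $G$ is countable, Baire gives a point $p\in\R^m$ arbitrarily close to $0$ that lies in the interior of an $m$-face of \emph{every} element's triangulation. At such a $p$ the link is genuinely $S^{n-m-1}$, every $n$-simplex at $p$ contains that $m$-face in $\R^m$ (so $M_\sigma|_{\R^m}=\mathrm{Id}$ automatically), and the piecewise projective action on the normal $\RP^{n-m-1}$ is well-defined and a homomorphism. This choice of basepoint also dissolves the two obstacles you flag at the end: in the kernel one has $A_\sigma=\mathrm{Id}$ outright (not merely scalar), so there is no $\lambda$-homomorphism and no $\lambda=-1$ sign problem, and the shear step reduces to the single observation that on a nearby leaf parallel to $\R^m$ the group acts by translation---either nontrivially (surjection to $\Z$) or trivially (interior of $\fix(H)$ near $p$). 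Your final paragraph, feeding the ``interior of $\fix$'' alternative into the frontier argument of Theorem~\ref{theorem:pillow}, is correct and is exactly how the paper closes the ``in particular''.
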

\begin{proof}
A linear automorphism of $\R^n$ fixing $\R^m$ acts linearly on the quotient
$\R^n/\R^m$ and therefore projectively on $\RP^{n-m-1}$. For each element
$g \in G$ we can pick a subdivision into linear simplices such that the restriction
of $g$ to each simplex is linear. The set of points on $\R^m$ in the complement
of the $m-1$ skeleton of this subdivision is open and dense, and therefore since
$G$ is countable, there is a point $p$ on $\R^m$ near $0$ which is in the complement
of the $m-1$ skeleton of the subdivision associated to {\em every} $g \in G$.

For every $g \in G$ and every simplex $\sigma$ in the linear subdivision 
associated to $g$, there is a well-defined projective action of $g$ on each
simplex of the link of $g$. Putting these actions together for all $g \in G$
gives a piecewise projective action of $G$ on $\RP^{n-m-1}$ at $p$. 

A finitely generated subgroup $H$ of the kernel locally preserves the foliation by 
planes parallel to $\R^m$, and acts there by translation. If this action is
trivial, $\fix(H)$ has interior near $p$.
\end{proof}

\begin{theorem}[$\PL$ Knots CO]\label{theorem:PL_ropes_co}
Let $M$ be an $n$ dimensional connected $\PL$ orientable manifold, and let $K$ be a
nonempty $n-2$ dimensional closed submanifold. Then the group $\PL_+(M,K)$ is
circularly orderable.
\end{theorem}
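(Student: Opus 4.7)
My plan is to imitate the proof of Theorem~\ref{theorem:ci_ropes_co}, substituting the piecewise projective representation provided by Lemma~\ref{lemma:PL_fixed_germ} for the derivative-at-a-point representation used in the smooth case, and substituting Theorem~\ref{theorem:pillow} (applied to a small PL ball) for the Thurston stability theorem.

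By Lemma~\ref{lemma:local_CO_is_CO} it suffices to prove that every finitely generated subgroup $G \leq \PL_+(M,K)$ is CO. Given such a (necessarily countable) $G$, I would first choose a point $p \in K$ lying in the interior of an $(n-2)$-dimensional simplex of $K$ in some PL triangulation of the pair $(M,K)$, so that in a PL chart near $p$ the pair looks like $(\R^n, \R^{n-2})$. Then Lemma~\ref{lemma:PL_fixed_germ} (with $m = n-2$) gives a homomorphism from the germ group $G_p$ to the piecewise projective automorphisms of $\RP^{n-m-1} = \RP^1$. Since each element of $\PL_+(M,K)$ preserves orientation of $M$ and fixes the codimension $2$ subspace $\R^{n-2}$ pointwise, the induced linearization on the transverse $2$-plane lies in $\GL^+(2,\R)$, so the resulting action on $\RP^1$ is orientation-preserving. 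Composing with $G \to G_p$ yields a homomorphism
$$\pi \colon G \longrightarrow \Homeo_+(S^1),$$
whose image is CO by Lemma~\ref{lemma:action_is_CO}.

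To invoke Lemma~\ref{lemma:extension_is_CO} it remains to show that $N := \ker(\pi)$ is LO. Let $K_p \trianglelefteq N$ denote the subgroup of elements that are trivial on some neighborhood of $p$; then $N/K_p$ embeds as a subgroup of the kernel of the piecewise projective representation on $G_p$, which is LO by Lemma~\ref{lemma:PL_fixed_germ}. By Lemma~\ref{lemma:ses} it thus suffices to check that $K_p$ itself is LO. Given any finitely generated $H \leq K_p$, finitely many generators share a common fixed neighborhood $U$ of $p$; choosing a small closed PL ball $B \subset U$ centered at $p$ places $H$ inside $\PL_+(M, \partial B)$, and $\partial B$ is a nonempty closed PL submanifold of $M$ of codimension $1$. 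Theorem~\ref{theorem:pillow} then says this ambient group is locally indicable, so $H$ is LO; by Lemma~\ref{lemma:LLO}, $K_p$ is LO, whence $N$ is LO and $G$ is CO.

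The bulk of the heavy lifting is performed by Lemma~\ref{lemma:PL_fixed_germ}, which already plays the role of Thurston stability in this setting, so the expected obstacles are essentially bookkeeping: (i) verifying that the germ-level representation assembles into a genuine homomorphism of the global group $G$ with orientation-preserving image on $\RP^1$, and (ii) handling the extra piece $K_p$ of the kernel consisting of elements with trivial germ at $p$—this is the step where we must swap from codimension $2$ to codimension $1$ fixed data and invoke Theorem~\ref{theorem:pillow} on a PL ball. Neither step looks hard, but both need the point $p$ to be chosen carefully inside a top-dimensional simplex of $K$ so that Lemma~\ref{lemma:PL_fixed_germ} literally applies in a chart.
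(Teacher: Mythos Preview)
Your argument is correct and follows essentially the same strategy as the paper's proof. The only cosmetic difference is in how the short exact sequences are organized: the paper first passes to the germ group $G_p$ (which is CO since the piecewise projective representation on $\RP^1$ has LO kernel by Lemma~\ref{lemma:PL_fixed_germ}) and then observes that $\ker(G\to G_p)$ is LO via the argument of Theorem~\ref{theorem:pillow}, whereas you compose all the way to the piecewise projective representation and then split its kernel $N$ as an extension of $N/K_p$ by $K_p$; both decompositions assemble the same ingredients, and your direct invocation of Theorem~\ref{theorem:pillow} with $K=\partial B$ is a clean way to handle the trivially-germed piece.
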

\begin{proof}
Let $G$ be a countable subgroup of $\PL_+(M,K)$ and consider the action near
a point $p$ in an $n-2$ dimensional simplex in $K$. By Lemma~\ref{lemma:PL_fixed_germ}
there is a homomorphism from the germ of $G$ at $p$ to the group of piecewise projective
automorphisms of $\RP^1$ with LO kernel. Thus the germ of $G$ at $p$ is CO.
Any finitely generated subgroup $H$ of the kernel of the map from $G$ to the germ
at $p$ has the property that $\fix(H)$ has nonempty interior, and therefore
$H$ is LO (in fact, locally indicable) as in the proof of Theorem~\ref{theorem:pillow}.
Thus every countable subgroup of $\PL_+(M,K)$ is CO, and therefore $\PL_+(M,K)$ is
CO by Lemma~\ref{lemma:local_CO_is_CO}.
\end{proof}

\subsection{Hyperbolic knots}

An interesting application is to groups acting on a 3-manifold stabilizing a knot.
Explicitly, let's specialize to the case that $M$ is an orientable 3-manifold, 
and $K$ is a knot with hyperbolic complement.
Let $G(M,K)$ denote the group of orientation preserving 
$\PL$ (resp. $\Diff^1$) homeomorphisms of $M$ that take $K$ to itself
by an orientation preserving homeomorphism, where we do {\em not} assume $K$ is
fixed pointwise; and let $G_0(M,K)$ denote the subgroup of such transformations
isotopic to the identity.

Since $M-K$ is Haken, as a topological group, 
the homotopy type of $G(M,K)$ is well-understood by the work of Hatcher and Ivanov. In fact,
one has:

\begin{theorem}[Hatcher; Hatcher, Ivanov]
As a topological group, $G_0(M,K)$ is contractible.
\end{theorem}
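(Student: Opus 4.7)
The statement is drawn from the literature, and what is required is to explain how the theorems of Hatcher and Ivanov assemble into it. The plan is to reduce to the knot exterior $E(K) := M \setminus \mathrm{int}(\nu(K))$ and then invoke Haken 3-manifold rigidity. Since $M \setminus K$ is hyperbolic, $E(K)$ is a compact, irreducible, atoroidal, anannular Haken 3-manifold with incompressible torus boundary whose interior carries a complete finite-volume hyperbolic structure. In this setup Mostow rigidity applies to the pair $(M,K)$, and Waldhausen's rigidity theorem for homotopy equivalences applies to $E(K)$.

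The central step is the restriction fibration
\[
\Diff_0(E(K),\partial E(K)) \longrightarrow G_0(M,K) \longrightarrow \mathcal{B},
\]
where the fiber is the identity component of the group of self-diffeomorphisms of $E(K)$ that restrict to the identity on $\partial E(K)$, and where the base $\mathcal{B}$ is the space of framed embeddings of $K$ in $M$ in the isotopy class of the inclusion (equivalently, the space of tubular neighborhoods of $K$ in its component). The fiber is contractible by Hatcher's theorem on diffeomorphism groups of Haken 3-manifolds, as sharpened by Ivanov: for a Haken manifold with incompressible boundary whose interior is hyperbolic, the identity component of the boundary-fixing diffeomorphism group is contractible. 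This rests on the parametrized form of Waldhausen's rigidity theorem, and is the deep input. The analogous $\PL$ statement also holds by the same machinery, and the smooth and $\PL$ versions are compared via Cerf-style smoothing.

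To finish, I would verify that $\mathcal{B}$ is contractible. The underlying space of unframed embeddings $S^1 \hookrightarrow M$ in the isotopy class of $K$ is contractible by Hatcher's theorem on embedding spaces of hyperbolic knots, whose essential input is Mostow rigidity: there is no nontrivial continuous family of self-isotopies of $(M,K)$. A priori the choice of framing contributes a $T^2$, but the putative generators --- meridian and longitude Dehn twists along $\partial E(K)$ --- do not extend to isotopies lying in $G_0(M,K)$, again by hyperbolic rigidity of the pair (if they did they would give nontrivial elements of $\pi_0$ of the symmetry group of the pair, contradicting Mostow). The long exact sequence of the fibration then gives weak contractibility of $G_0(M,K)$; as a topological group of the homotopy type of a CW complex, this upgrades to contractibility. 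The hard part is the contractibility of the fiber, which is precisely the Hatcher-Ivanov rigidity result for diffeomorphism groups of Haken manifolds.
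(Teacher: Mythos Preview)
The paper does not prove this theorem; it simply attributes the $\PL$ case to Hatcher and Ivanov and remarks that the smooth case follows once one has Hatcher's proof of the Smale conjecture. There is no argument in the paper to compare against---only citations---and your sketch already goes further than the paper itself does.

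Your identification of the essential input is correct: the Hatcher--Ivanov theorem that components of the diffeomorphism group of a Haken $3$-manifold are contractible, applied to the exterior $E(K)$. However, the details of your fibration argument are not quite right. The fiber over the basepoint is not a priori $\Diff_0(E(K),\partial E(K))$, but rather those elements of $\Diff(E(K),\partial E(K))$ which, extended by the identity across $\nu(K)$, land in $G_0(M,K)$; one must check these agree. More seriously, your treatment of the base is off: the longitudinal rotation of $\nu(K)$ (reparametrizing $K$) plainly \emph{does} extend to an isotopy in $G_0(M,K)$---just rotate the solid torus and damp out near its boundary---so the claim that neither boundary twist extends is false, and Mostow rigidity is not the mechanism at work here. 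A cleaner packaging avoids the framed-embedding space entirely: compare $G_0(M,K)$ directly with the identity component of $\Diff(E(K))$ (not rel boundary) via restriction; Hatcher--Ivanov gives contractibility of the latter, and contractibility of the space of diffeomorphisms of the solid torus with prescribed boundary behavior---which in the smooth category is where the Smale conjecture enters---controls the fiber of the comparison.
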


The PL case is due independently to Hatcher and Ivanov; see e.g.\/ \cite{Hatcher} or \cite{Ivanov}.
As was well-known at the time, the smooth case follows from the PL case once one
knows the Smale conjecture, proved by Hatcher \cite{Hatcher_Smale}.

\begin{corollary}
With notation as above, $G_0(M,K)$ is left orderable.
\end{corollary}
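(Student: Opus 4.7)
The plan is to realize $G_0(M,K)$ as an extension of two left orderable groups by combining the contractibility of $G_0(M,K)$ with Theorem~\ref{theorem:pillow}. Since each $f \in G_0(M,K)$ preserves $K \cong S^1$ and is isotopic to the identity on $M$, restriction defines a continuous homomorphism of topological groups $\phi : G_0(M,K) \to \Homeo_+(S^1)$. Because $G_0(M,K)$ is contractible (hence simply connected), the usual covering space lifting criterion provides a unique continuous lift $\tilde\phi : G_0(M,K) \to \widetilde{\Homeo_+(S^1)}$ sending $\Id$ to $\Id$; a standard path-connectedness argument verifies that $\tilde\phi$ is a homomorphism. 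Identifying $\widetilde{\Homeo_+(S^1)}$ with the subgroup of $\Homeo_+(\R)$ consisting of homeomorphisms commuting with integer translation, we see that it is left orderable, and hence so is its subgroup $\mathrm{im}(\tilde\phi)$.

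This gives a short exact sequence $1 \to \ker(\tilde\phi) \to G_0(M,K) \to \mathrm{im}(\tilde\phi) \to 1$, so by Lemma~\ref{lemma:ses} it suffices to show that $\ker(\tilde\phi)$ is left orderable. An element $f \in \ker(\tilde\phi)$ fixes $K$ pointwise (so $f \in \PL_+(M,K)$) and the loop in $\Homeo_+(S^1)$ obtained by restricting any isotopy of $f$ back to $\Id$ has winding number zero. The plan is to use this vanishing winding to identify $\ker(\tilde\phi)$ naturally with a subgroup of $\PL_+(N,\partial N)$, where $N$ is the PL exterior $M \setminus \mathrm{int}(\nu K)$ of $K$; since $\partial N$ has codimension one in $N$, Theorem~\ref{theorem:pillow} gives that $\PL_+(N,\partial N)$ is locally indicable, hence left orderable. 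The $C^1$ version follows analogously, with Theorem~\ref{theorem:cilo} playing the role of Theorem~\ref{theorem:pillow}.

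The main obstacle is the identification in the last step. The trivial winding around $K$ should allow one to isotope $f$ (through elements of $\PL_+(M,K)$) to a homeomorphism supported in the exterior $N$, whose restriction then lies in $\PL_+(N,\partial N)$. Making this into a well-defined injective homomorphism on the discrete group $\ker(\tilde\phi)$ is the key technical point: in the PL category it requires careful combinatorial control of the isotopy near $K$, while the $C^1$ version can be handled more cleanly via flow arguments using the differential of $f$ along $K$.
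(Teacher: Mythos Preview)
Your lift $\tilde\phi:G_0(M,K)\to\widetilde{\Homeo_+(S^1)}$ via simple-connectedness is fine, and corresponds exactly to the paper's use of contractibility to kill the Euler class of the action on $K$.

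The gap is in the kernel. You acknowledge yourself that the identification of $\ker\tilde\phi$ with a subgroup of $\PL_+(N,\partial N)$ is ``the key technical point'' and you do not carry it out. There is also a conceptual mismatch: the integer distinguishing $\tilde\phi$ from $\phi$ lies in $\pi_1(\Homeo_+(K))$ and records the \emph{tangential} winding of the isotopy along $K$. It carries no information about the action of $f$ on the normal bundle of $K$, which is precisely what obstructs isotoping $f$ rel $K$ to a map fixing a whole tubular neighbourhood. So even granting that some isotopy exists, the ``vanishing winding'' you have in hand is not the relevant one, and in any case a non-canonical isotopy does not produce a homomorphism. The appeal to Theorem~\ref{theorem:pillow} is therefore unjustified as written.

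The paper's route for the kernel is quite different and avoids this difficulty entirely. Rather than trying to push maps off $K$, it invokes Theorem~\ref{theorem:PL_ropes_co} (or Theorem~\ref{theorem:ci_ropes_co} in the smooth case) to conclude that the kernel is \emph{circularly} orderable, the order arising from the action on the normal $\RP^1$ at a point of $K$. It then applies the same contractibility/Euler-class vanishing (via Lemma~\ref{lemma:euler_is_obstruction}) a second time to upgrade this circular order to a left order. With both image and kernel LO, Lemma~\ref{lemma:ses} finishes.
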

\begin{proof}
Let's abbreviate $G_0(M,K)$ by $G$ in what follows.
The image of $G$ in $\Homeo_+(K)$ is circularly orderable, and we have
already shown that the kernel is circularly orderable. We would like to show that
both the image and the kernel are actually LO. By Lemma~\ref{lemma:euler_is_obstruction} we need
to show that the cohomology class $[e]$ associated to either circular order 
is trivial. Notice by construction that $[e]$ is an element of
$H^2(G;\Z)$ where $G$ is thought of as a {\em topological} group. But since
$M-K$ is hyperbolic, $G$ is contractible, and therefore $H^2(G;\Z)=0$.
\end{proof}

\section{Groups of homeomorphisms}\label{section:finite}

It is natural to wonder whether the groups $\Homeo(M,K)$ are left- or circularly-ordered
when $K$ is of codimension 1 or 2, by analogy with the PL or smooth case. But here the 
situation is utterly mysterious, even in dimension 2. In fact, the following question still
seems far beyond reach:

\begin{question}
Is $\Homeo(I^2,\partial I^2)$ left-orderable?
\end{question}

It is challenging to obtain any restrictions on the subgroups of $\Homeo(I^n,\partial I^n)$ at
all, for $n\ge 2$. The purpose of this section is to point out that Smith theory shows that
the groups $\Homeo(I^n,\partial I^n)$ are torsion free. This fact seems to be well known to
the experts in Smith theory and its generalizations, but not to people working on left-orderable
groups, and therefore it seems worthwhile to give a proof (or rather to point out how the result
follows immediately from results which are well-documented in the literature).

We use the following theorem, generalizing work of Smith, and due in this generality to Borel \cite{Borel}:

\begin{theorem}[Smith, Borel]\label{theorem:Smith}
Let $X$ be a {\em finitisic space} (e.g. a compact space) with the mod $p$ homology of a point,
and let $G$ be a finite $p$-group acting on $X$ by homeomorphisms. Then $X^G$ (the fixed point set of
$G$) has the mod $p$ homology of a point.
\end{theorem}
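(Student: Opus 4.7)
The plan is a two-step reduction: first, reduce from an arbitrary finite $p$-group $G$ to the cyclic case $\Z/p$; second, handle $G = \Z/p$ via the classical Smith long exact sequences, formulated in \v{C}ech (or Alexander--Spanier) cohomology so as to accommodate the finitistic hypothesis. Throughout, all (co)homology will have coefficients in the field $\mathbb{F}_p$.

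For the first step, any finite $p$-group admits a normal subgroup $N$ with $G/N \cong \Z/p$. Since $X^G = (X^N)^{G/N}$ and $X^N$ is a closed, and hence finitistic, subspace of $X$, the inductive hypothesis on $|G|$ identifies $X^N$ as mod-$p$ acyclic, and applying the cyclic case to the induced $G/N$-action on $X^N$ completes the induction.

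For the cyclic case, let $G = \langle g\rangle$ and form the elements $\sigma := 1 + g + \cdots + g^{p-1}$ and $\tau := g-1$ of $\mathbb{F}_p[G]$. Since $(g-1)^p = g^p - 1 = 0$ in characteristic $p$, one has $\sigma\tau = \tau\sigma = 0$ and $\tau^{p-1} = -\sigma$, so the principal ideals $(\sigma)$ and $(\tau)$ annihilate each other. Applying these operators to the \v{C}ech cochain complex $C^*(X)$ yields subcomplexes $\sigma C^*(X)$ and $\tau C^*(X)$, and a short exact sequence
$$0 \to \tau C^*(X) \to C^*(X) \to C^*(X)/\tau C^*(X) \to 0$$
whose quotient is identified, via the algebra of $\sigma,\tau$, with $\sigma C^*(X) \oplus C^*(X^G)$. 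The resulting Smith long exact sequence, together with its partner obtained by swapping the roles of $\sigma$ and $\tau$, iterates to give the Smith inequality
$$\sum_i \dim_{\mathbb{F}_p} H^i(X^G;\mathbb{F}_p) \;\le\; \sum_i \dim_{\mathbb{F}_p} H^i(X;\mathbb{F}_p).$$
By hypothesis the right side equals $1$, so $X^G$ is either empty or mod-$p$ acyclic; the former is excluded by the Euler characteristic congruence $\chi(X^G)\equiv \chi(X) = 1 \pmod{p}$, valid because non-fixed orbits have size divisible by $p$.

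The main obstacle is accommodating the finitistic hypothesis: the Smith sequences are transparent for finite simplicial $G$-complexes, but a general finitistic space has no simplicial structure and need not be finite-dimensional in any naive sense. The resolution is to work sheaf-theoretically via the Leray spectral sequence of the orbit map $\pi: X \to X/G$; finitisticity is precisely the hypothesis that bounds the \v{C}ech cohomological dimension of $X/G$ and makes the relevant coefficient sheaves (built from $\mathbb{F}_p[G]$-module cohomology of stabilizers) cohomologically well-behaved, so that the algebraic Smith machinery applies unchanged. This is the content of Borel's treatment in the \emph{Seminar on Transformation Groups}.
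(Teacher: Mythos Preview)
The paper does not give its own proof of this theorem: it is quoted from the literature (Borel \cite{Borel}) and used only as a black box in the subsequent corollary. Your sketch is a faithful outline of the classical argument as presented in Borel's seminar or in Bredon's \emph{Introduction to Compact Transformation Groups}, so there is nothing in the paper to compare it against.

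Two minor points. First, the identity is $\tau^{p-1} = \sigma$, not $-\sigma$: in $\mathbb{F}_p[x]$ one has $x^p - 1 = (x-1)^p$, whence $\sigma = (x^p-1)/(x-1) = (x-1)^{p-1}$. This does not affect the argument. Second, your justification of the congruence $\chi(X^G) \equiv \chi(X) \pmod p$ by ``non-fixed orbits have size divisible by $p$'' is a cell-counting argument that only makes literal sense for $G$-CW complexes; for a general finitistic space the congruence is instead extracted from the Smith sequences themselves (each long exact sequence yields an additive relation among the alternating sums of $\mathbb{F}_p$-dimensions, and finitisticity guarantees these sums are finite). Since you have already set up those sequences, this is a matter of phrasing rather than a gap.
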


Here a {\em finitistic space} is one such that every covering has a finite dimensional refinement.
Any compact space is finitistic, so $I^n$ is certainly finitistic (and the main application of
Smith theory is to manifolds and manifold-like spaces).

\begin{corollary}
The group $\Homeo(I^n,\partial I^n)$ is torsion-free for all $n$.
\end{corollary}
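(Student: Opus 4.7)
The plan is to combine Smith--Borel (Theorem~\ref{theorem:Smith}) with a doubling trick and Alexander duality. Suppose for contradiction that some $f\in\Homeo(I^n,\partial I^n)$ has finite order greater than $1$; replacing $f$ by an appropriate power, I may assume $f$ has prime order $p$, and set $G=\langle f\rangle\cong\Z/p$. The first step is to double $I^n$ along its boundary, forming $S^n=I^n_1\cup_{\partial I^n}I^n_2$, and to extend $f$ to a homeomorphism $\tilde f\colon S^n\to S^n$ by letting $\tilde f$ act as $f$ on $I^n_1$ and as the identity on $I^n_2$. Because $f|_{\partial I^n}=\Id$, this is a well-defined self-homeomorphism of $S^n$ of order $p$, and its fixed set is
$$\fix(\tilde f)=\fix(f)\cup I^n_2 \quad\text{and}\quad \fix(f)\cap I^n_2=\partial I^n.$$

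Next I would apply Theorem~\ref{theorem:Smith} to the $G$-action on the contractible (hence mod-$p$ acyclic) compact space $I^n_1$; this shows that $\fix(f)$ has the mod-$p$ cohomology of a point. A Mayer--Vietoris computation (in \v{C}ech cohomology, valid for closed covers of compact Hausdorff spaces) applied to $\fix(\tilde f)=\fix(f)\cup I^n_2$, whose overlap is $\partial I^n\cong S^{n-1}$ and each of whose pieces is mod-$p$ acyclic, then upgrades this to the statement that $\fix(\tilde f)$ has the mod-$p$ cohomology of $S^n$; in particular, $H^n(\fix(\tilde f);\Z/p)\neq 0$.

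Finally I would invoke Alexander duality: $\fix(\tilde f)$ is closed, hence compact, in $S^n$, and if it were a proper subset $F\subsetneq S^n$ then deleting any missing point would embed $F$ into $\R^n$, forcing $H^n(F;\Z/p)=0$ and contradicting the preceding paragraph. Hence $\fix(\tilde f)=S^n$, so $\tilde f=\Id$ and therefore $f=\Id$, contradicting $f$ having order $p>1$. The main obstacle is precisely this last passage from a cohomological statement to a set-theoretic equality: Theorem~\ref{theorem:Smith} on its own only asserts that $\fix(f)$ is mod-$p$ acyclic, which is perfectly compatible with $\fix(f)$ being much smaller than $I^n$. The role of the doubling is to glue a contractible piece onto $\fix(f)$ that is automatically fixed by the extended action, thereby promoting the Smith--Borel output to a nonzero top-dimensional class, which Alexander duality can then leverage to force $\fix(\tilde f)$ to exhaust all of $S^n$.
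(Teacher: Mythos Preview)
Your argument is correct, but it takes a more elaborate route than the paper's. Both proofs begin identically: reduce to $f$ of prime order $p$ and apply Theorem~\ref{theorem:Smith} to the $G$-action on $I^n$ to conclude that $\fix(f)$ is mod-$p$ (\v{C}ech) acyclic. The divergence is in extracting the contradiction.

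The paper argues directly inside $I^n$: if some point $x$ were not fixed, then the inclusions $\partial I^n \hookrightarrow \fix(f) \hookrightarrow I^n\setminus\{x\}$ would factor the restriction map $\check H^{n-1}(I^n\setminus\{x\};\Z/p)\to \check H^{n-1}(\partial I^n;\Z/p)$ through $\check H^{n-1}(\fix(f);\Z/p)=0$. But that restriction is an isomorphism (the inclusion $\partial I^n\hookrightarrow I^n\setminus\{x\}$ is a homotopy equivalence), a contradiction. Your doubling-plus-Mayer--Vietoris-plus-Alexander-duality package encodes exactly the same obstruction, just transported to $S^n$: the nonvanishing of $\check H^n(\fix(\tilde f);\Z/p)$ that you produce via Mayer--Vietoris is precisely the image of the fundamental class of $\partial I^n$ under the connecting homomorphism, and your Alexander duality step is the dual of the paper's ``$\partial I^n$ is essential in $I^n\setminus\{x\}$''. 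So the two arguments are equivalent in content; the paper's version avoids the doubling and the auxiliary Mayer--Vietoris computation, while yours has the minor virtue of making the appeal to duality explicit rather than leaving ``homologically essential'' to the reader.
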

\begin{proof}
If $f$ is a nontrivial periodic homeomorphism of $I^n$ fixed on $\partial I^n$, then some power of
$f$ has prime order $p$ for some nontrivial $p$, so without loss of generality we may assume that
$f$ itself has order $p$. By Theorem~\ref{theorem:Smith} the fixed point
set of $f$ has the mod $p$ homology of a point. But this fixed point set includes $\partial I^n$,
which is homologically essential (with mod $p$ coefficients) in the complement of any point in $I^n$.
Thus all of $I^n$ is fixed by $f$.
\end{proof}


\begin{thebibliography}{99}
\bibitem{Agol}
	I. Agol,
	\emph{The virtual Haken conjecture},
	preprint, arXiv:1204.2810
\bibitem{Borel}
	A. Borel,
	\emph{Nouvelle d\'emonstration d'un th\'eor\`eme de P. A. Smith},
	Comment. Math. Helv. {\bf 29} (1955), 27--39
\bibitem{Brin_Squier}
	M. Brin and C. Squier,
	\emph{Groups of piecewise linear homeomorphisms of the real line},
	Invent. Math. {\bf 79} (1985), no. 3, 485--498
\bibitem{Burns_Hale}
	R. Burns and V. Hale,
	\emph{A note on group rings of certain torsion-free groups},
	Canad. Math. Bull. {\bf 15} (1972), 441--445
\bibitem{Calegari_foliations}
	D. Calegari,
	\emph{Foliations and the geometry of 3-manifolds},
	Oxford Mathematical Monograps. Oxford University Press, Oxford, 2007
\bibitem{Calegari_PL}
	D. Calegari,
	\emph{Stable commutator length in subgroups of $\PL^+(I)$},
	Pacific J. Math. {\bf 232} (2007), no. 2, 257--262
\bibitem{Calegari_scl}
	D. Calegari,
	\emph{scl},
	MSJ Memoirs {\bf 20}, Mathematical Society of Japan, Tokyo, 2009
\bibitem{Calegari_Freedman}
	D. Calegari and M. Freedman,
	\emph{Distortion in transformation groups},
	Geom. Topol. {\bf 10} (2006), 267--293
\bibitem{Chehata}
	C. Chehata,
	\emph{An algebraically simple ordered group},
	Proc. LMS {\bf 3} (1952), no. 2, 183--197
\bibitem{Franks_Handel}
	J. Franks and M. Handel,
	\emph{Distortion elements in group actions on surfaces},
	Duke Math. J. {\bf 131} (2006), no. 3, 441--468
\bibitem{Funar}
	L. Funar,
	\emph{On power subgroups of mapping class groups},
	preprint, arXiv:0910.1493
\bibitem{Gambaudo_Ghys}
	J.-M. Gambaudo and \'E. Ghys,
	\emph{Commutators and diffeomorphisms of surfaces},
	Ergodic Theory Dynam. Systems {\bf 24} (2004), no. 5, 1591--1617
\bibitem{Gratza}
	B. Gratza,
	\emph{Piecewise linear approximations in symplectic geometry},
	Diss. ETH No. 12499, 1998
\bibitem{Hatcher}
	A. Hatcher,
	\emph{Homeomorphisms of sufficiently large $P^2$-irreducible 3-manifolds},
	Topology {\bf 15} (1976), no. 4, 343--347
\bibitem{Hatcher_Smale}
	A. Hatcher,
	\emph{A proof of the Smale conjecture, $\Diff(S^3)\simeq O(4)$},
	Ann. of Math. (2) {\bf 117} (1983), no. 3, 553--607
\bibitem{Hurtado}
	S. Hurtado,
	\emph{Continuity of discrete homomorphisms of diffeomorphism groups},
	preprint; arXiv:1307.4447
\bibitem{Ivanov}
	N. Ivanov,
	\emph{Groups of diffeomorphisms of Waldhausen manifolds},
	Studies in topology II, LOMI {\bf 66} (1976), 172--176
\bibitem{Kapovich_RAAG}
	M. Kapovich,
	\emph{RAAGs in Ham},
	preprint, arXiv:1104.0348
\bibitem{Kim-Koberda}
	S.-H. Kim and T. Koberda,
	\emph{Anti-trees and right-angled Artin subgroups of planar braid groups},
	preprint; arXiv:1312.6465
\bibitem{Kopell}
	N. Kopell,
	\emph{Commuting Diffeomorphisms},
	in Global Analysis; Proc. Symp. Pure Math. XIV AMS, Providence, RI (1970), 165--184
\bibitem{Militon}
	E. Militon,
	\emph{\'El\'ements de distorsion du groupe des diff\'eomorphismes isotopes \`a
	l'identit\'e d'une vari\'et\'e compacte},
	preprint, arXiv:1005.1765
\bibitem{Thurston_stability}
	W. Thurston,
	\emph{A generalization of the Reeb stability theorem}, 
	Topology, {\bf 13} (1974), 347--352.
\bibitem{Wladis}
	C. Wladis,
	\emph{Thompson's group is distorted in the Thompson-Stein groups},
	Pacific J. Math. {\bf 250} (2011), no. 2, 473--485
\end{thebibliography}
\end{document}